\crefname{section}{Section}{Sections}
\crefname{subsection}{\S}{\S\S}
\crefname{subsubsection}{\S}{\S\S}
\theoremstyle{plain}
\newtheorem{lemma}{Lemma}[section]
\newtheorem{proposition}[lemma]{Proposition}
\newtheorem{corollary}[lemma]{Corollary}
\newtheorem{theorem}[lemma]{Theorem}
\theoremstyle{nonumberplain}
\newtheorem{theoremN}{Theorem}
\newtheorem{claimN}{Claim}
\theoremstyle{plain}
\newtheorem{definition}[lemma]{Definition}
\newtheorem{remark}[lemma]{Remark}
\newtheorem{notation}[lemma]{Notation}
\crefname{definition}{definition}{definitions}
\crefname{ex}{example}{examples}
\crefname{remark}{remark}{remarks}
\crefname{convention}{convention}{conventions}
\crefname{notation}{notation}{notations}
\crefname{table}{table}{tables}
\crefname{lemma}{lemma}{lemmas}
\crefname{proposition}{proposition}{propositions}
\crefname{corollary}{corollary}{corollaries}
\crefname{theorem}{theorem}{theorems}
\crefname{enumi}{}{}
\crefname{assumption}{assumption}{Assumptions}
\crefname{equation}{}{}
\numberwithin{equation}{section}
\renewcommand{\theequation}{\thesection-\arabic{equation}}
\theoremstyle{nonumberplain}
\newtheorem{proof}{Proof}
\newcommand\pf[1]{\newtheorem{#1}{Proof of \Cref{#1}}}
\newcommand\bG{{\mathbb G}}
\newcommand\bH{{\mathbb H}}
\newcommand\bR{{\mathbb R}}
\newcommand\cH{{\mathcal H}}
\newcommand\cT{{\mathcal T}}
\newcommand\fm{{\mathfrak m}}
\newcommand\fn{{\mathfrak n}}
\DeclareMathOperator{\id}{id}
\newcommand\numberthis{\addtocounter{equation}{1}\tag{\theequation}}
\newcommand{\qedhere}{\mbox{}\hfill\ensuremath{\blacksquare}}
\title{Quantum relative modular functions}
\author{Alexandru Chirvasitu}
\begin{document}

\date{}

\newcommand{\Addresses}{{
  \bigskip
  \footnotesize

  \textsc{Department of Mathematics, University at Buffalo, Buffalo,
    NY 14260-2900, USA}\par\nopagebreak \textit{E-mail address}:
  \texttt{achirvas@buffalo.edu}

}}

\maketitle

\begin{abstract}
  Let $\mathbb{H}\trianglelefteq\mathbb{G}$ be a closed normal subgroup of a locally compact quantum group. We introduce a strictly positive group-like element affiliated with $L^{\infty}(\mathbb{G})$ that, roughly, measures the failure of $\mathbb{G}$ to act measure-preservingly on $\mathbb{H}$ by conjugation. The triviality of that element is equivalent to the condition that $\mathbb{G}$ and $\mathbb{G}/\mathbb{H}$ have the same modular element, by analogy with the classical situation. This condition is automatic if $\mathbb{H}\le \mathbb{G}$ is central, and in general implies the unimodularity of $\mathbb{H}$.

  We also describe a bijection between strictly positive group-like elements $\delta$ affiliated with $C_0(\mathbb{G})$ and quantum-group morphisms $\mathbb{G}\to (\mathbb{R},+)$, with the closed image of the morphism easily described in terms of the spectrum of $\delta$. This then implies that property-(T) locally compact quantum groups admit no non-obvious strictly positive group-like elements.
\end{abstract}

\noindent {\em Key words: locally compact quantum group; modular element; modular function; unimodular}

\vspace{.5cm}

\noindent{MSC 2020: 46L67; 20G42; 22D05; 22D25; 22D55}


\section*{Introduction}

The initial motivation for the present paper was the well-known result that nilpotent locally compact groups are unimodular (e.g. \cite[p.318, Corollary 2]{howe-four-nilp}). Several proofs exist in the literature (with more references cited in \Cref{se:relmodel}), but one naive strategy that comes to mind would be as follows: given that nilpotence means that the {\it ascending central series}
\begin{equation*}
  \{1\}\le Z(\bG)\le \cdots\le \bG
\end{equation*}
is finite, perhaps one can employ induction by starting with the abelian group $Z(\bG)$ (which is of course unimodular) and then lifting unimodularity along {\it cocentral} quotients $\bG\to \bG/\bH$ (i.e. quotients by a central closed subgroup). In short, one would need

\begin{claimN}
  A cocentral quotient $\bG/\bH$ of a locally compact group is unimodular if and only if $\bG$ is.
\end{claimN}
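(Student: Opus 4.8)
The plan is to derive the claim from the classical factorization of modular functions along a closed normal subgroup. Write $q\colon\bG\to\bG/\bH$ for the quotient homomorphism. The starting point is the description of the modular function as a \emph{module of inner automorphisms}: up to the usual inversion convention, $\Delta_\bG(g)$ is the positive scalar by which conjugation $c_g\colon x\mapsto gxg^{-1}$ rescales a fixed left Haar measure on $\bG$ (since $c_g=R_{g^{-1}}\circ L_g$, with left translation measure-preserving and right translation by $g^{-1}$ scaling by $\Delta_\bG(g)^{\pm1}$). Because $\bH\trianglelefteq\bG$ is normal, $c_g$ carries $\bH$ to itself and induces on $\bG/\bH$ the inner automorphism $c_{q(g)}$; moreover the module of any automorphism of $\bG$ stabilizing $\bH$ is the product of the modules of the restricted automorphism of $\bH$ and of the induced automorphism of $\bG/\bH$ — this multiplicativity is a Fubini-type statement expressing integration over $\bG$ as fibrewise integration over the cosets $g\bH$ followed by integration over $\bG/\bH$, i.e.\ Weil's quotient-integral formula. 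Applying it to $c_g$ gives
\[
  \Delta_\bG(g)=\Delta_{\bG/\bH}\bigl(q(g)\bigr)\cdot\mathrm{mod}_\bH(g),\qquad g\in\bG,
\]
where $\mathrm{mod}_\bH\colon\bG\to\bR_{>0}$, $g\mapsto$ (module of the automorphism $h\mapsto ghg^{-1}$ of $\bH$), is a continuous homomorphism.

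Granting this identity, cocentrality finishes the argument. As $\bH$ is central, every automorphism $h\mapsto ghg^{-1}$ of $\bH$ is the identity, so $\mathrm{mod}_\bH\equiv 1$ and the formula collapses to $\Delta_\bG=\Delta_{\bG/\bH}\circ q$; since $q$ is onto, $\Delta_\bG\equiv 1$ if and only if $\Delta_{\bG/\bH}\equiv 1$, which is precisely the claim. (The same formula with $g$ ranging over $\bH$ recovers $\Delta_\bG|_\bH=\Delta_\bH=1$, consistent with $\bH$ being abelian.)

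The only step that requires genuine work is the displayed factorization formula, and it is entirely standard, so I expect no real obstacle. The point worth emphasising is where centrality is used: the forward implication ($\bG$ unimodular $\Rightarrow\bG/\bH$ unimodular) needs only that $\bH$ be closed and normal, whereas the converse is false for merely normal $\bH$ — the $ax+b$ group, with its normal but non-central copy of $\bR$ and the attendant nontrivial conjugation module, is the classical counterexample. Thus triviality of $\mathrm{mod}_\bH$, i.e.\ centrality of $\bH$, is exactly the hypothesis that makes unimodularity descend and lift along $q$; this is the commutative shadow of the relative modular element that the rest of the paper constructs in the quantum setting.
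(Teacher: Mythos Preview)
Your argument is correct and is precisely the classical shadow of the paper's own proof. The paper establishes the quantum version (\Cref{th:resmod} and \Cref{cor:unimodquot}) via the Weyl disintegration $\varphi_{\bG}=\varphi_{\bG/\bH}\circ\cT$ together with the coincidence $\cT_l=\cT_r$ forced by centrality, then reads off the equality of modular elements from a Radon--Nikodym computation; your Weil quotient-integral formula is exactly this disintegration, your $\mathrm{mod}_\bH$ is the classical incarnation of the relative modular element $\delta_{\bG\triangleright\bH}$ (compare \Cref{eq:normnotcent-cls} and the remark following \Cref{cor:reltriv}), and your observation that centrality trivializes it is the commutative case of \Cref{pr:trivact}.
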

Since unimodularity simply means that the {\it modular function} \cite[\S 2.4]{folland} is trivial, this suggests possible generalization:

\begin{claimN}
  For any cocentral quotient $\pi:\bG\to \bG/\bH$, the modular function of $\bG$ is obtained from that of $\bG/\bH$ by restriction along $\pi$.
\end{claimN}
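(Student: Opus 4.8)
The plan is to unwind both modular functions against a fixed right Haar measure and exploit the fact that a central subgroup is normalised by all of $\bG$ in the strongest possible way. Concretely, fix right Haar measures $\mu_\bG$ on $\bG$, $\mu_\bH$ on $\bH$ and $\mu_{\bG/\bH}$ on the quotient, chosen compatibly via Weil's formula $\int_\bG f\,d\mu_\bG=\int_{\bG/\bH}\int_\bH f(\dot x h)\,d\mu_\bH(h)\,d\mu_{\bG/\bH}(\dot x)$. For $g\in\bG$ write $\Delta_\bG(g)$ for the Radon--Nikodym derivative of left translation by $g$ against $\mu_\bG$, and similarly for the other two groups. The claim to prove is $\Delta_\bG=\Delta_{\bG/\bH}\circ\pi$.

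First I would record the general behaviour of modular functions under a closed normal subgroup $\bH\trianglelefteq\bG$: for any such $\bH$ one has, for $g\in\bG$,
\begin{equation*}
  \Delta_\bG(g)=\Delta_{\bG/\bH}(\pi(g))\cdot\delta_\bH(g),
\end{equation*}
where $\delta_\bH\colon\bG\to(0,\infty)$ is the continuous homomorphism recording how conjugation by $g$ scales the Haar measure of $\bH$, i.e.\ $\int_\bH f(g^{-1}hg)\,d\mu_\bH(h)=\delta_\bH(g)\int_\bH f(h)\,d\mu_\bH(h)$. This factorisation is standard and follows by inserting the conjugation change of variables into Weil's formula and comparing with the defining property of $\mu_\bG$; in particular $\delta_\bH|_\bH=\Delta_\bH$. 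So the entire content of the claim is the vanishing of the obstruction: $\delta_\bH\equiv 1$ when $\bH$ is central.

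The key step, and the only place centrality enters, is to show $\delta_\bH\equiv 1$. When $\bH\le Z(\bG)$, conjugation by any $g\in\bG$ acts as the identity automorphism of $\bH$: $g^{-1}hg=h$ for all $h\in\bH$. Hence the change-of-variables map $h\mapsto g^{-1}hg$ on $\bH$ is literally the identity, so it certainly preserves $\mu_\bH$, giving $\delta_\bH(g)=1$ for every $g$. Substituting into the factorisation yields $\Delta_\bG(g)=\Delta_{\bG/\bH}(\pi(g))$, which is exactly the assertion that $\Delta_\bG$ is the restriction (pullback) of $\Delta_{\bG/\bH}$ along $\pi$; the unimodularity equivalence of the first Claim is the special case where one of the two sides is trivial.

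The main obstacle is not centrality itself but justifying the factorisation formula cleanly: one must know that a compatible triple of Haar measures exists (this is Weil's formula for the closed normal subgroup $\bH$), and one must handle the conjugation substitution carefully, since $g^{-1}\bH g=\bH$ uses normality and the resulting constant $\delta_\bH(g)$ must be checked to be independent of the test function $f$ and multiplicative in $g$. Once that bookkeeping is in place the centrality input is a one-line triviality. I would therefore organise the write-up as: (i) set up compatible Haar measures; (ii) prove the general factorisation $\Delta_\bG=(\Delta_{\bG/\bH}\circ\pi)\cdot\delta_\bH$ for closed normal $\bH$; (iii) observe $\delta_\bH\equiv1$ for central $\bH$; (iv) conclude. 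This also previews the quantum story: the element measuring the failure of measure-preservation of conjugation is precisely the analogue of $\delta_\bH$, and the paper's strictly positive group-like element plays that role in $L^\infty(\bG)$.
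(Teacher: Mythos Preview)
Your argument is correct and is exactly the classical version of the paper's approach: your factorisation $\Delta_\bG=(\Delta_{\bG/\bH}\circ\pi)\cdot\delta_\bH$ is the classical counterpart of the paper's $\delta_\bG=\delta_{\bG\triangleright\bH}\,\delta_{\bG/\bH}$ (\Cref{th:normnotcent}), and the triviality of your conjugation factor for central $\bH$ is precisely the paper's observation that centrality forces $\cT_l=\cT_r$. The paper only cites Folland for the classical Claim itself, but its quantum proof of \Cref{th:resmod} and the subsequent development follow the same template you outline.
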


All of this is true and follows easily enough from standard material on modular functions (e.g. from \cite[Theorem 2.51]{folland}), though I have not seen these precise statements. Couched in these terms, though, the statements generalize easily to the framework of locally compact {\it quantum} groups \cite{kvcast,kvwast,kus-univ,wor-mult,sw1,sw2,mnw}, since all of the ingredients are present. To summarize, postponing the notation and terminology until \Cref{se.prel}:
\begin{itemize}
\item For a locally compact quantum group $\bG$ there is a {\it modular element} $\delta_{\bG}$ \cite[\S 7]{kvcast} affiliated with the von Neumann algebra $L^{\infty}(\bG)$, to be thought of as the inverse of the usual modular function (see \Cref{re:howcls} for why this convention is convenient).
\item There are notions of (closed \cite[Definition 2.6]{vs-imp}) normal \cite[Definition 2.10]{vv} and central \cite[Definition 2.3]{kss-cent} quantum subgroups $\bH\le \bG$.
\item As well as quotient groups $\bG/\bH$ by closed normal quantum subgroups \cite[Theorem 2.11]{vv}.
\end{itemize}

All of this allows the formulation of one of the main results below (see \Cref{se:relmodel} and \Cref{cor:unimodquot}):

\begin{theoremN}
  Given a closed central quantum subgroup $\bH\le \bG$ of a locally compact quantum group, the modular elements of $\bG$ and $\bG/\bH$ coincide.

  In particular, a cocentral quotient $\bG/\bH$ is unimodular if and only if $\bG$ is. \qedhere
\end{theoremN}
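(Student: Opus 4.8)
The plan is to follow the classical template: express the modular element of $\bG$ through that of $\bG/\bH$ together with a correction factor recording how the conjugation action of $\bG$ on $\bH$ distorts the Haar weight of $\bH$. Concretely I would first introduce --- this is the business of \Cref{se:relmodel} --- a \emph{relative modular element} $\delta_{\bH\triangleleft\bG}$, a strictly positive group-like element affiliated with $L^\infty(\bG)$ that is the quantum avatar of the classical homomorphism $g\mapsto\operatorname{mod}(\operatorname{Ad}_g|_{\bH})$, and prove a factorization of the form $\delta_\bG=\delta_{\bH\triangleleft\bG}\cdot\pi^{*}(\delta_{\bG/\bH})$, where $\pi\colon\bG\to\bG/\bH$ is the quotient morphism and $\pi^{*}\colon L^\infty(\bG/\bH)\hookrightarrow L^\infty(\bG)$ the induced inclusion of \cite{vv}. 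This is the operator-algebraic incarnation of the classical identity in which conjugation by $g$ on $G$ is built out of conjugation on $N$ and on $G/N$ and one reads off moduli. Granting the factorization, the theorem is immediate: centrality of $\bH\le\bG$ forces the conjugation action of $\bG$ on $\bH$ to be trivial (cf. \cite{kss-cent}), hence in particular weight-preserving, so $\delta_{\bH\triangleleft\bG}=1$ and $\delta_\bG=\pi^{*}(\delta_{\bG/\bH})$: the two modular elements agree under the inclusion $L^\infty(\bG/\bH)\subseteq L^\infty(\bG)$. The ``in particular'' clause then costs nothing, $\pi^{*}$ being injective: $\bG$ is unimodular iff $\delta_\bG=1$ iff $\pi^{*}(\delta_{\bG/\bH})=1$ iff $\delta_{\bG/\bH}=1$ iff $\bG/\bH$ is unimodular.

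For the factorization I would exploit the structure of the quotient from \cite{vv}: $L^\infty(\bG/\bH)$ is a $\Delta_\bG$-invariant von Neumann subalgebra of $L^\infty(\bG)$, and the Haar weights upstairs arise from those downstairs by composing with normal faithful semifinite operator-valued weights that ``integrate along the $\bH$-cosets'', built from the Haar weights of $\bH$. Pushing the defining relation between $\psi_\bG$, $\varphi_\bG$ and $\delta_\bG$ (and its counterpart for $\bG/\bH$) through these operator-valued weights, the discrepancy between the left-hand and right-hand integrations over $\bH$ --- which is precisely what the conjugation action governs --- materialises as $\delta_{\bH\triangleleft\bG}$. A technically cleaner route to the same identity runs through Connes cocycles: $\delta_\bG^{it}$ is, up to a scalar, the cocycle $[D\psi_\bG\colon D\varphi_\bG]_t$ of \cite{kvcast}, and the chain rule for cocycle derivatives under composition with operator-valued weights expresses it via $[D\psi_{\bG/\bH}\colon D\varphi_{\bG/\bH}]_t=\delta_{\bG/\bH}^{it}$ (again up to a scalar) together with a relative cocycle that is exactly $\delta_{\bH\triangleleft\bG}^{it}$. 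Throughout I would manipulate the bounded unitaries $\delta^{it}$ and the one-parameter groups of group-like unitaries they generate --- equivalently, via the correspondence with morphisms to $(\bR,+)$ developed later in the paper --- rather than the unbounded affiliated operators directly.

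The whole difficulty is concentrated in the factorization: pinning down the correct definition of $\delta_{\bH\triangleleft\bG}$ and matching it with $\delta_\bG\,\pi^{*}(\delta_{\bG/\bH})^{-1}$ requires the imprimitivity/induction apparatus for locally compact quantum groups \cite{vs-imp,vv} and careful bookkeeping of operator-valued weights across the quotient --- this is where the left/right Haar asymmetry enters and manufactures the correction term --- together with some care in making sense of products of unbounded affiliated group-like elements. By contrast, once the factorization and the relative modular element are in place, the specialisation to the central case, and hence the stated theorem together with its cocentral-unimodularity consequence, is essentially automatic. (For the central case alone there is also a more hands-on shortcut: triviality of the conjugation action makes $\delta_\bG$ invariant under the $\bH$-action on $L^\infty(\bG)$, hence affiliated with the fixed-point algebra $L^\infty(\bG/\bH)$, and a direct comparison of Haar weights identifies it there with $\delta_{\bG/\bH}$.)
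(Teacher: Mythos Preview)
Your proposal is correct and rests on exactly the ingredients the paper uses: the operator-valued weights $\cT_l,\cT_r$ from $L^\infty(\bG)$ to $L^\infty(\bG/\bH)$, the Connes-cocycle expression for $\delta^{it}$, and Haagerup's theorem that cocycle derivatives pass unchanged through composition with an operator-valued weight. The paper merely reverses your order of exposition --- it proves the central case directly first (centrality makes $\bH$ cocommutative hence unimodular and forces $\cT_l=\cT_r=:\cT$, after which the single chain $(D\psi_\bG:D\varphi_\bG)_t=(D\psi_{\bG/\bH}\circ\cT:D\varphi_{\bG/\bH}\circ\cT)_t=(D\psi_{\bG/\bH}:D\varphi_{\bG/\bH})_t$ finishes), and only afterward introduces the relative modular element $\delta_{\bG\triangleright\bH}$ and the general factorization $\delta_\bG=\delta_{\bG\triangleright\bH}\,\delta_{\bG/\bH}$ that you propose to establish first.
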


More generally, there is a very satisfying way of measuring the discrepancy from the previous theorem's conclusion. Summarizing \Cref{th:relmodel,th:normnotcent} and \Cref{pr:relmodel,pr:trivact}:

\begin{theoremN}
  Let $\bH\le \bG$ be a closed normal quantum subgroup. The modular elements $\delta_{\bG}$ and $\delta_{\bG/\bH}$ strongly commute, so their ratio $\delta:=\delta_{\bG} \delta_{\bG/\bH}^{-1}$ is again a strictly positive element affiliated with $L^{\infty}(\bG)$, group-like in the sense that $\Delta_{\bG}(\delta)=\delta\otimes\delta$. 

  That element is trivial precisely when two canonical operator valued weights from $L^{\infty}(\bG)$ to its von Neumann subalgebra $L^{\infty}(\bG/\bH)$ coincide. This condition
  \begin{itemize}
  \item is the quantum analogue of $\bG$ acting measure-preservingly by conjugation on $\bH$;
  \item is automatic when $\bH\le \bG$ is central;
  \item and entails the unimodularity of $\bH$. \qedhere
  \end{itemize}  
\end{theoremN}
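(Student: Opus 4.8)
The plan is to reduce every clause to bookkeeping with the left and right Haar weights $\varphi_{\bullet},\psi_{\bullet}$ of $\bG$ and $\bG/\bH$, their modular automorphism groups, and the two canonical operator-valued weights implementing $L^{\infty}(\bG/\bH)\subseteq L^{\infty}(\bG)$. Write $N=L^{\infty}(\bG/\bH)\subseteq M=L^{\infty}(\bG)$ and let $T,T'$ be the normal semifinite faithful operator-valued weights from $M$ to the extended positive part of $N$ attached respectively to the left and right Haar weights, so that $\varphi_{\bG}=\varphi_{\bG/\bH}\circ T$ and $\psi_{\bG}=\psi_{\bG/\bH}\circ T'$; both are $N$-bimodular and commute with the relevant modular groups, so in particular $\sigma^{\varphi_{\bG}}_{t}|_{N}=\sigma^{\varphi_{\bG/\bH}}_{t}$ and $\sigma^{\psi_{\bG}}_{t}|_{N}=\sigma^{\psi_{\bG/\bH}}_{t}$; moreover the scaling constants of $\bG$ and $\bG/\bH$ coincide. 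I will also use that $\Delta_{\bG}$ restricts to $\Delta_{\bG/\bH}$ on $N$, that the modular elements $\delta_{\bG}$ and $\delta_{\bG/\bH}$ (the latter viewed in $N\subseteq M$) are group-like, and the Connes relation $\sigma^{\psi_{\bullet}}_{t}=\mathrm{Ad}(\delta_{\bullet}^{it})\circ\sigma^{\varphi_{\bullet}}_{t}$ together with the Radon--Nikodym description of $\psi_{\bullet}$ in terms of $\varphi_{\bullet}$ and $\delta_{\bullet}$.

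\emph{Strong commutation, and that $\delta$ is a group-like affiliated operator.} For $n\in N$ the two restriction identities and the Connes relations (for $\bG$ and for $\bG/\bH$) give $\delta_{\bG/\bH}^{it}\,\sigma^{\varphi_{\bG/\bH}}_{t}(n)\,\delta_{\bG/\bH}^{-it}=\sigma^{\psi_{\bG/\bH}}_{t}(n)=\sigma^{\psi_{\bG}}_{t}(n)=\delta_{\bG}^{it}\,\sigma^{\varphi_{\bG/\bH}}_{t}(n)\,\delta_{\bG}^{-it}$, so the unitary $\delta_{\bG}^{-it}\delta_{\bG/\bH}^{it}$ commutes with $\sigma^{\varphi_{\bG/\bH}}_{t}(N)=N$, in particular with $\delta_{\bG/\bH}^{is}$ for all $s$; cancelling the one-parameter group $\delta_{\bG/\bH}^{it}$ yields that $\delta_{\bG}^{it}$ commutes with $\delta_{\bG/\bH}^{is}$ for all $s,t$, i.e. $\delta_{\bG}$ and $\delta_{\bG/\bH}$ strongly commute. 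Hence $\delta:=\delta_{\bG}\delta_{\bG/\bH}^{-1}$ is a well-defined strictly positive operator affiliated with $M$, with $\delta^{it}=\delta_{\bG}^{it}\delta_{\bG/\bH}^{-it}$. Group-likeness follows by multiplicativity of $\Delta_{\bG}$: $\delta_{\bG}^{it}$ is group-like in $M$, and $\delta_{\bG/\bH}^{it}$ is group-like in $N$, hence in $M$ via $\Delta_{\bG}|_{N}=\Delta_{\bG/\bH}$; since the two commute we may regroup legs to get $\Delta_{\bG}(\delta^{it})=(\delta_{\bG}^{it}\otimes\delta_{\bG}^{it})(\delta_{\bG/\bH}^{-it}\otimes\delta_{\bG/\bH}^{-it})=\delta^{it}\otimes\delta^{it}$, i.e. $\Delta_{\bG}(\delta)=\delta\otimes\delta$.

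\emph{Triviality of $\delta$ versus $T=T'$.} Feeding $\psi_{\bG}=\psi_{\bG/\bH}\circ T'$ and $\varphi_{\bG}=\varphi_{\bG/\bH}\circ T$ into the Radon--Nikodym descriptions of $\psi_{\bG}$ and $\psi_{\bG/\bH}$ (the equal scaling constants making the $\nu$-corrections cancel), and using $N$-bimodularity of $T$ and faithfulness of $\varphi_{\bG/\bH}$, one extracts the comparison identity $T'(x)=\delta_{\bG/\bH}^{-1/2}\,T(\delta_{\bG}^{1/2}x\,\delta_{\bG}^{1/2})\,\delta_{\bG/\bH}^{-1/2}$, read on the appropriate domains with the unbounded twists interpreted via spectral truncations of the $\delta_{\bullet}$. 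If $\delta=1$ then $\delta_{\bG}=\delta_{\bG/\bH}\in N$ and $N$-bimodularity collapses the right-hand side to $T(x)$, so $T=T'$. Conversely, if $T=T'$, rewriting the identity as $T(\delta_{\bG/\bH}^{1/2}x\,\delta_{\bG/\bH}^{1/2})=T(\delta_{\bG}^{1/2}x\,\delta_{\bG}^{1/2})$ and substituting $x=\delta_{\bG/\bH}^{-1/2}z^{*}z\,\delta_{\bG/\bH}^{-1/2}$ turns it, via strong commutation, into $T(z^{*}z)=T(\delta^{1/2}z^{*}z\,\delta^{1/2})$ for $z$ in a core; applying $\varphi_{\bG/\bH}$ gives $\varphi_{\bG}=(\varphi_{\bG})_{\delta}$, and uniqueness in the Radon--Nikodym/Connes-cocycle calculus forces $\delta^{it}\equiv 1$, i.e. $\delta=1$. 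This establishes the equivalence; and since $T=T'$ says precisely that the two canonical ``averaging over $\bH$'' operations agree, this is the operator-algebraic rendering of $\bG$ leaving the Haar weight of $\bH$ invariant under conjugation — the detailed dictionary with the classical picture being treated separately.

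\emph{Central case and unimodularity of $\bH$.} When $\bH\le\bG$ is central, $\delta_{\bG}=\delta_{\bG/\bH}$ by the theorem on central quantum subgroups stated above, so $\delta=1$ automatically. For the last clause, restrict the situation along $\bH\le\bG$: the conjugation action of $\bG$ on $\bH$ restricts to the adjoint action of $\bH$ on itself, where the role of the pair $(T,T')$ is played by the pair $(\varphi_{\bH},\psi_{\bH})$ of Haar weights of $\bH$ (the ``quotient'' $\bH/\bH$ being trivial); concretely one identifies the restrictions to $L^{\infty}(\bH)$ of the two operator-valued weights with $\varphi_{\bH}$ and $\psi_{\bH}$ through the imprimitivity/bicrossed-product description relating $L^{\infty}(\bG)$, $L^{\infty}(\bG/\bH)$ and $L^{\infty}(\bH)$. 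Then $\delta=1$, i.e. $T=T'$, forces $\varphi_{\bH}=\psi_{\bH}$, i.e. $\bH$ is unimodular. I expect this last clause to be the main obstacle: the rest is modular-automorphism and Radon--Nikodym bookkeeping once the structural facts about $T,T'$ are granted, whereas passing to the intrinsic pair $(\varphi_{\bH},\psi_{\bH})$ requires care with the way a closed normal quantum subgroup sits inside $\bG$ in the Vaes framework — in particular making the ``restriction along $\bH\le\bG$'' of the conjugation data rigorous. A secondary, routine-but-nontrivial point is legitimising the unbounded products $\delta_{\bG}^{1/2}x\,\delta_{\bG}^{1/2}$ and the weight $(\varphi_{\bG})_{\delta}$ via spectral truncations and the Radon--Nikodym theorem for weights.
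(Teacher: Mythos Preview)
Your strong-commutation argument is correct and rather elegant: showing $\delta_{\bG}^{-it}\delta_{\bG/\bH}^{it}\in N'\cap M$ by comparing the two implementations of $\sigma^{\psi}|_N$ is essentially what the paper does, only the paper packages it as ``the Connes cocycle $(D\cT_r:D\cT_l)_t$ lies in $N^c$'' via Haagerup's theory. The group-likeness and the central case are also fine. One incidental error: the scaling constants of $\bG$ and $\bG/\bH$ do \emph{not} coincide in general; the paper proves $\nu_{\bG}=\nu_{\bH}\,\nu_{\bG/\bH}$ (\Cref{pr:scalemult}). You do not seem to use this claim in any essential way, but it should be removed.

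The real gap is in the equivalence $\delta=1\Leftrightarrow T=T'$. Your comparison identity $T'(x)=\delta_{\bG/\bH}^{-1/2}T(\delta_{\bG}^{1/2}x\,\delta_{\bG}^{1/2})\delta_{\bG/\bH}^{-1/2}$ is asserted to follow from ``faithfulness of $\varphi_{\bG/\bH}$'', but what you have actually shown is only that the two sides agree \emph{after} applying $\psi_{\bG/\bH}$ (equivalently $\varphi_{\bG/\bH}$ twisted by $\delta_{\bG/\bH}$). Faithfulness of a weight gives $\varphi(a)=0\Rightarrow a=0$ for $a\ge 0$, not $\varphi(a)=\varphi(b)\Rightarrow a=b$; two distinct operator-valued weights into $\widehat N_+$ can certainly have the same composition with a single n.s.f.\ weight on $N$. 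Moreover, the right-hand side is not obviously $N$-bimodular (the inner $\delta_{\bG}^{1/2}$ does not commute with $N$), so it is not clear it is even an operator-valued weight. The paper avoids all of this by working with Connes cocycles: one shows directly that $(D\cT_r:D\cT_l)_t=\nu_{\bH}^{\frac12 it^2}\delta^{it}$ (via \cite[Proposition 5.5]{vaes-rn} applied to $\varphi_{\bG/\bH}\circ\cT_r$ under $\sigma^{\varphi_{\bG}}$), and then $\cT_r=\cT_l$ iff the cocycle is trivial iff $\delta=1$, using the cocycle identity to separate the quadratic $\nu_{\bH}$-factor from the linear $\delta$-factor. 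This is both rigorous and shorter.

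Finally, your instinct about the unimodularity of $\bH$ is exactly backwards: it is the \emph{easiest} clause, not the hardest, once one knows how the modular element restricts. The paper uses the restriction formula $\iota_r(\delta_{\bG})=\delta_{\bG}\otimes\delta_{\bH}$ (from \cite[Theorem 3.4, Corollary 3.9]{bcv}); since $\delta_{\bG/\bH}$ is affiliated with $N=\{x:\iota_r(x)=x\otimes 1\}$ one gets $\iota_r(\delta)=\delta\otimes\delta_{\bH}$, and $\delta=1$ immediately forces $\delta_{\bH}=1$. Your attempt to ``restrict the conjugation data along $\bH\le\bG$'' and identify the resulting pair with $(\varphi_{\bH},\psi_{\bH})$ is not made precise and is unnecessary.
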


The element $\delta$ in the statement above is the relative modular function alluded to in the title of the paper (see \Cref{def:relmodel}): the phrase is meant to indicate that it is relative to an embedding $\bH\le \bG$ rather than absolute, attached to $\bG$ alone.

On a different note but still on the topic of strictly positive group-like elements $\delta$ affiliated with $L^{\infty}(\bG)$, we have (\Cref{pr:mortor}, \Cref{pr:posisgr} and \Cref{th:clim})

\begin{theoremN}
  Every strictly positive group-like element $\delta$ affiliated with $L^{\infty}(\bG)$ induces a quantum-group morphism $\bG\to \bR$ whose closed image is precisely the closed subgroup
  \begin{equation*}
    \{\log t\ |\ 0<t\in\mathrm{Sp}(\delta)\}\subseteq (\bR,+).
  \end{equation*}
  \qedhere
\end{theoremN}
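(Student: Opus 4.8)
The plan is to combine \Cref{pr:mortor}, \Cref{pr:posisgr} and \Cref{th:clim}; concretely I must (a) produce from $\delta$ a morphism $f_{\delta}\colon\bG\to\bR$, (b) check that $\Sigma:=\{\log t\mid 0<t\in\mathrm{Sp}(\delta)\}$ is a closed subgroup of $(\bR,+)$, and (c) identify $\Sigma$ with the closed image of $f_{\delta}$. For (a), write $a:=\log\delta$, a self-adjoint operator affiliated with $L^{\infty}(\bG)$; group-likeness of $\delta$ is precisely the relation $\Delta_{\bG}(a)=a\otimes 1+1\otimes a$ (strong commutation of the summands), and the one-parameter group $(\delta^{it})_{t\in\bR}$ consists of group-like unitaries, which are known to lie in $M(C_0(\bG))$, so $a$ is affiliated with $C_0(\bG)$ as well. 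Let $\mathbf p$ be the canonical self-adjoint generator affiliated with $C_0(\widehat{\bR})$ under $\widehat{\bR}\cong\bR$, so that $\widehat{\Delta}(\mathbf p)=\mathbf p\otimes 1+1\otimes\mathbf p$. Since $a\otimes 1$ and $1\otimes\mathbf p$ strongly commute, the unitary
\[
V:=\exp\!\bigl(i\,(a\otimes 1)(1\otimes\mathbf p)\bigr)\in M\!\bigl(C_0(\bG)\otimes C_0(\widehat{\bR})\bigr)
\]
is well defined, and the two bicharacter identities $(\Delta_{\bG}\otimes\id)V=V_{13}V_{23}$ and $(\id\otimes\widehat{\Delta})V=V_{12}V_{13}$ drop out of the two additivity relations, exponentials of strongly commuting summands splitting as products. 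By the Meyer--Roy--Woronowicz correspondence \cite{mnw}, $V$ is the bicharacter of a morphism $f_{\delta}\colon\bG\to\bR$; unwinding, the $*$-homomorphism $\rho\colon C_0(\bR)\to M(C_0(\bG))$ attached to $f_{\delta}$ is the functional calculus $g\mapsto g(a)$, equivalently $\rho(e^{is\mathbf x})=\delta^{is}$ for the coordinate function $\mathbf x$ on $\bR$.

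For (b): since $\Delta_{\bG}$ is an injective normal $*$-homomorphism it identifies the spectral projections of $\delta$ with those of $\Delta_{\bG}(\delta)=\delta\otimes\delta$, so $\mathrm{Sp}(\delta)=\mathrm{Sp}(\delta\otimes\delta)=\overline{\mathrm{Sp}(\delta)\cdot\mathrm{Sp}(\delta)}$, whence $\mathrm{Sp}(\delta)$ is closed under multiplication. The unitary antipode $R_{\bG}$ sends each group-like unitary to its adjoint, so $R_{\bG}(\delta^{it})=\delta^{-it}$ and hence $R_{\bG}(\delta)=\delta^{-1}$; being a $*$-antiautomorphism, $R_{\bG}$ preserves spectra, so $\mathrm{Sp}(\delta^{-1})=\mathrm{Sp}(\delta)$, i.e.\ $\mathrm{Sp}(\delta)$ is closed under inversion. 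A nonempty closed subset of $((0,\infty),\cdot)$ closed under products and inverses contains $1$ and is a subgroup; applying $\log$ (and noting $\mathrm{Sp}(a)=\Sigma$) gives (b).

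For (c), recall that the closed subgroups of $\bR$ are $\{0\}$, the $c\bZ$ with $c>0$, and $\bR$, and that for such a subgroup $L$ the embedding $L\hookrightarrow\bR$ corresponds to the restriction epimorphism $C_0(\bR)\twoheadrightarrow C_0(L)$ of function algebras, whose kernel is the ideal $I_L$ of functions vanishing on $L$. The morphism $f_{\delta}$ factors as $\bG\to L\hookrightarrow\bR$ exactly when $\rho$ annihilates $I_L$ --- and then the factored $C_0$-map is automatically a morphism of quantum groups, because $C_0(\bR)\to C_0(L)$ is surjective --- while by the spectral picture of $\rho$ this happens iff every continuous function vanishing on $L$ vanishes on $\mathrm{Sp}(a)=\Sigma$, i.e.\ iff $\Sigma\subseteq L$. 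So the closed subgroups through which $f_{\delta}$ factors are exactly those containing $\Sigma$; since $\Sigma$ is itself one of them by (b), it is the least, hence the closed image of $f_{\delta}$.

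The bicharacter identities, the spectral bookkeeping, and the classification of closed subgroups of $\bR$ are genuinely routine. The two delicate points are (i) justifying that a group-like unitary affiliated with $L^{\infty}(\bG)$ already lies in $M(C_0(\bG))$, and extracting $\rho$ from $V$ cleanly via the Meyer--Roy--Woronowicz dictionary; and (ii) matching ``$f_{\delta}$ factors through the closed subgroup $L\le\bR$'' with whichever definition of closed image is fixed in \Cref{se.prel}, in particular verifying that a factorization at the level of the $C_0$-algebras upgrades to one through a closed quantum subgroup. I expect (ii) to be the main obstacle; (i) is a known structural fact about group-like elements and needs only to be invoked with care.
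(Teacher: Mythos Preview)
Your proposal is correct and follows the same three-part skeleton as the paper (\Cref{pr:mortor}, \Cref{pr:posisgr}, \Cref{th:clim}), but the technical execution differs in parts (a) and (c). For (a), the paper avoids the bicharacter construction entirely: it simply invokes the functional calculus (\cite[Proposition 6.5]{kus-fc}) to get a unique morphism $C_0(\bR_{>0})\rightsquigarrow C_0^u(\bG)$ sending $\id_{\bR_{>0}}\mapsto\delta$, composes with the exponential isomorphism, and cites \cite[Theorem 4.8]{mrw} to recognize this as a quantum-group morphism; your explicit $V=\exp(i\,a\otimes\mathbf p)$ gives the same object but with more moving parts (and note your citation should be \cite{mrw}, not \cite{mnw}). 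For (c), the paper again leans on Kustermans' functional calculus (\cite[Result 6.16 and Theorem 3.4]{kus-fc}) to factor $\underline{\delta}$ through $C_0(\mathrm{Sp}(\log\delta))$ with the second map injective, whereas you use the explicit classification of closed subgroups of $\bR$ and an ideal/kernel argument; both work, and yours is arguably more transparent at the cost of being less portable. One small gap: your assertion that $R_{\bG}$ sends group-like unitaries to their adjoints is not immediate and is exactly what the paper isolates as \Cref{cor:rtinv}, proved by pushing the $R$-intertwining property of the morphism from (a) through from the $\bR$ side---so you should either cite that or invoke your own $f_\delta$ from step (a) rather than treat it as a freestanding fact. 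Your self-flagged concerns (i) and (ii) are handled in the paper by reference to the proof of \cite[Proposition 7.10]{kvcast} and to the fact that for the classical target $\bR$ the closed-image machinery of \cite{kks} collapses to ordinary closed subgroups, respectively.
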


An immediate consequence (\Cref{th:tnodelta} below) is the following generalization of the unimodularity of property-(T) quantum groups \cite[Theorem 6.1]{dds}:

\begin{theoremN}
  If the LCQG $\bG$ has property (T) then the only strictly positive group-like element affiliated with $L^{\infty}(\bG)$ is $1$.  \qedhere
\end{theoremN}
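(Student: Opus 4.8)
The plan is to feed $\delta$ into the morphism $\bG\to\bR$ produced above and then use the rigidity of property (T): a property-(T) quantum group cannot surject onto a non-compact amenable quantum group, and the only closed subgroups of $(\bR,+)$ that survive this constraint is $\{0\}$.

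Concretely, let $\delta$ be a strictly positive group-like element affiliated with $L^{\infty}(\bG)$. By \Cref{th:clim} it induces a quantum-group morphism $\pi\colon\bG\to\bR$ whose closed image is the closed subgroup
\[
  H:=\{\log t\ |\ 0<t\in\mathrm{Sp}(\delta)\}\subseteq(\bR,+).
\]
Since $\delta>0$ is strictly positive, $\mathrm{Sp}(\delta)$ is a non-empty subset of $(0,\infty)$, so $H$ is exactly the closed subgroup of $\bR$ attached to $\log\bigl(\mathrm{Sp}(\delta)\bigr)$. It therefore suffices to prove $H=\{0\}$: that equality forces $\log\bigl(\mathrm{Sp}(\delta)\bigr)=\{0\}$, i.e. $\mathrm{Sp}(\delta)=\{1\}$, and a strictly positive self-adjoint operator with spectrum the singleton $\{1\}$ is the identity, so $\delta=1$.

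To obtain $H=\{0\}$ I would argue as follows. Co-restricting $\pi$ exhibits $H$ as a quotient quantum group of $\bG$; since property (T) is inherited by quotients, $H$ has property (T); being abelian it is amenable, and an amenable quantum group with property (T) is compact; but the only compact subgroup of $\bR$ is $\{0\}$. (Equivalently, and at the level of $\delta$ directly: property (T) yields the vanishing of first cohomology with trivial coefficients — the quantum analogue of property (FH) — which already kills every morphism $\bG\to\bR$; this is precisely the mechanism underlying the unimodularity of property-(T) quantum groups in \cite[Theorem 6.1]{dds}, now applied to the group-like element $\delta$ rather than to the modular element $\delta_{\bG}$.) Either way $H=\{0\}$, and $\delta=1$ by the functional-calculus observation above.

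The main obstacle is this transfer step. One has to make rigorous, inside the LCQG framework, either (i) that the closed image $H$ of $\pi$ is a genuine quotient of $\bG$ — not merely a closed quantum subgroup of $\bR$ receiving a morphism with dense range — so that permanence of property (T) under quotients really applies, or (ii) the property-(FH)-type statement that property (T) annihilates all morphisms $\bG\to\bR$; this also leans on the quantum version of ``amenable $+$ (T) $\Rightarrow$ compact''. Everything downstream — the classification of closed subgroups of $\bR$ as $\{0\}$, the lattices $c\bZ$ $(c>0)$, and $\bR$, together with the passage $\mathrm{Sp}(\delta)=\{1\}\Rightarrow\delta=1$ — is routine.
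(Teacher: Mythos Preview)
Your approach is essentially the paper's: attach $\underline{\delta}:\bG\to\bR$, identify its closed image $H$, argue that $H$ inherits property (T), and conclude $H=\{0\}$ since abelian property-(T) groups are compact. The obstacle you flag is exactly the one the paper addresses, and it does so via option (i) in the precise form you suspected: the morphism $\bG\to H$ obtained by co-restricting to the closed image has \emph{dense image} in the sense of \cite[Definition 2.8]{dsv} (this is immediate from the definition of closed image; see \cite[discussion following Theorem A.1]{dsv}), and property (T) passes along dense-image morphisms by \cite[Theorem 5.7]{dsv}. No appeal to property (FH) or to ``amenable $+$ (T) $\Rightarrow$ compact'' in the quantum setting is needed, since $H$ is already a classical abelian group and one can invoke \cite[Theorem 1.1.6]{bdv} directly.
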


\subsection*{Acknowledgements}

This work is partially supported by NSF grant DMS-2001128. 

I am grateful for insightful comments and pointers to the literature from A. Skalski, P. Kasprzak, P. So{\l}tan, G. Folland and A. Deitmar. 

\section{Preliminaries}\label{se.prel}

Inner products are linear in the second variable, and for vectors $v,w$ in a Hilbert space $(\cH,\braket{-\mid -})$ and an operator $A\in B(\cH)$ we write
\begin{equation*}
  \omega_{v,w}(A):=\braket{v\mid Aw} = \braket{vA\mid w} = \braket{A^*v\mid w}. 
\end{equation*}

Any number of sources cover the needed operator-algebra background: \cite{blk,ped-aut}, etc. Assorted standard notation:
\begin{itemize}
\item $B(\cdot)$ and $K(\cdot)$ denote the algebras of bounded and respectively compact operators on a Hilbert space.
\item $M(\cdot)$ is the {\it multiplier algebra} of a $C^*$-algebra \cite[\S II.7.3]{blk}.
\item For $C^*$-algebras $A$ and $B$ the space $\mathrm{Mor}(A,B)$ of {\it morphisms} from $A$ to $B$ consists (as in \cite[Introduction]{wor-aff}, \cite[\S 1.1]{dkss}, \cite[\S 2]{dsv}, etc.) of those linear, bounded, multiplicative $*$-maps $f:A\to M(B)$ that are {\it non-degenerate} in the sense that $f(A)B$ is norm-dense in $B$.

  We also depict $\pi\in\mathrm{Mor}(A,B)$ as arrows:
  \begin{equation*}
    A \stackrel{\pi}{\rightsquigarrow} B.
  \end{equation*}

\item $M_*$ is the {\it predual} \cite[\S III.2.4]{blk} of a von Neumann algebra, $M_+$ its positive cone (set of positive elements), and $\widehat{M}_+$ its {\it extended positive part} \cite[Definition 1.1]{haag1}.
\item the tensor-product symbol `$\otimes$' has contextual meaning: between $C^*$-algebras it denotes the {\it minimal} (or {\it spatial}) $C^*$ tensor product \cite[\S II.9.1.3]{blk}, between $W^*$-algebras it is the von-Neumann-flavored spatial tensor product of \cite[\S III.5.1.4]{blk}, the Hilbert-space tensor product when appropriate, etc.
\end{itemize}

For the needed material on locally compact quantum groups we refer mainly to \cite{kvcast,kvwast,kus-univ} (with more precise citations below, as needed). The first of these also has an introductory overview of the necessary weight and modular theory; \cite{tak2,strat} are other good sources for this latter topic. Of particular interest are the {\it operator-valued weights} of \cite{haag1,haag2}, covered also in \cite[\S IX.4]{tak2}.

To recall, briefly, the main concept of interest (\cite[Definition 1.1]{kvwast}):
\begin{definition}\label{def:lcqg}
  A locally compact quantum group $\bG$ (occasionally abbreviated LCQG) is a pair $(M,\Delta)$ where
  \begin{itemize}
  \item $M$, denoted also by $L^{\infty}(\bG)$, is a von Neumann algebra.
  \item $\Delta=\Delta_{\bG}$ is a $W^*$ morphism $M\to M\otimes M$, {\it coassociative} in the sense that
    \begin{equation*}
      (\Delta\otimes\id)\circ \Delta = (\id\otimes\Delta)\circ \Delta: M\to M\otimes M\otimes M.
    \end{equation*}
  \item we assume the existence of
    \begin{enumerate}[(a)]
    \item a {\it left Haar weight} on $M$: a normal, semifinite and faithful (n.s.f. for short) weight $\varphi=\varphi_{\bG}$, left-invariant in the sense that
      \begin{equation*}
        \varphi((\omega\otimes\id)\Delta(x)) = \omega(1)\varphi(x)
      \end{equation*}
      for all $\omega\in M_*$ and
      \begin{equation*}
        x\in \fm^+_{\varphi}:=\{x\in M_+\ |\ \varphi(x)<\infty\}. 
      \end{equation*}
    \item similarly, a {\it right} Haar weight $\psi=\psi_{\bG}$, right-invariant:
      \begin{equation*}
        \psi((\id\otimes\omega)\Delta(x)) = \omega(1)\psi(x),\ \forall \omega\in M_*,\ \forall x\in \fm_{\psi}^+.
      \end{equation*}
    \end{enumerate}
  \end{itemize}
\end{definition}

Also central to the discussion is the following object (\cite[Terminology 7.16]{kvcast}).

\begin{definition}
  The {\it modular element} $\delta=\delta_{\bG}$ of an LCQG $\bG$ is the unique (possibly unbounded) operator that is
  \begin{itemize}
  \item {\it strictly positive} in the sense that its spectral resolution \cite[Theorem 13.30]{rud-fa} assigns the zero projection to $\{0\}$ (equivalently: it has dense range \cite[p.841]{kvcast});
  \item {\it affiliated} with $L^{\infty}(\bG)$ in the sense that its spectral projections belong to that von Neumann algebra;
  \end{itemize}
  and such that
  \begin{equation}\label{eq:rvsl}
    \psi_{\bG}(\cdot) = \varphi_{\bG,\delta}(\cdot):=\varphi_{\bG}\left(\delta^{1/2}\cdot\delta^{1/2}\right). 
  \end{equation}
\end{definition}

Other notation pertinent to quantum groups:
\begin{itemize}
\item $L^2(\bG)=L^2(\bG,\varphi_{\bG})$ is the Hilbert space carrying the GNS representation attached to the left Haar weight $\varphi$, equipped with $\Lambda=\Lambda_{\bG}:\fn_{\varphi}\to L^2(\bG)$.
\item $C_0(\bG)\subset L^{\infty}(\bG)$ is the {\it reduced function algebra} of $\bG$, associated to $L^{\infty}(\bG)$ in \cite[\S 1.2]{kvwast} (with the notation $M=L^{\infty}(\bG)$, $M_c=C_0(\bG)$) and studied extensively in \cite{kvcast}.
\item $C^u_0(\bG)$ is the {\it universal} function algebra, constructed in \cite[\S 4]{kus-univ}.
\item $\widehat{\bG}$ is the {\it dual} LCQG: \cite[\S 1.1]{kvwast} for the von Neumann version and \cite[\S 8]{kvcast} for the $C^*$ counterpart.
\item $W\in W_{\bG}$ is the {\it multiplicative unitary} of \cite[Proposition 3.17]{kvcast}: it is defined as an operator on $L^2(\bG)\otimes L^2(\bG)$ by
  \begin{equation*}
    W^*(\Lambda(x)\otimes \Lambda(y)) = \Lambda\otimes\Lambda (\Delta(y)(x\otimes 1)),
  \end{equation*}
  it implements the comultiplication by
  \begin{equation*}
    \Delta(x) = W^*(1\otimes x)W,
  \end{equation*}
  and belongs to
  \begin{equation*}
    M(C_0(\bG)\otimes C_0(\widehat{\bG}))\subset L^{\infty}(\bG)\otimes L^{\infty}(\widehat{\bG}). 
  \end{equation*}
\item $S=S_{\bG}$ and $R=R_{\bG}$ are the {\it antipode} and {\it unitary antipode} of $\bG$ respectively \cite[Terminology 5.42]{kvcast}. 
\end{itemize}

\begin{remark}
  \cite[Proposition 7.10]{kvcast} says that $\delta_{\bG}$ is in fact also affiliated with the $C^*$-algebra $C_0(\bG)$ in the sense of \cite[Definition 1.1]{wor-aff}. Furthermore, it lifts along the surjection $C^u_0(\bG)\to C_0(\bG)$ to a strictly positive element $\delta_{u}=\delta_{u,\bG}$ affiliated with the universal function algebra \cite[Proposition 10.1]{kus-univ}.
\end{remark}

\begin{notation}\label{not:inprime}
  We denote the affiliation relation, in either the $C^*$ or $W^*$ setting, by primed containment symbols: $\in'$ and $\ni'$.
\end{notation}

\subsection{Morphisms}\label{subse:lcqg-mor}

LCQG morphisms have many incarnations; for a review of the theory the reader can consult, for instance, \cite{mrw} (where many of the issues were initially settled), \cite[\S 12]{kus-univ} or \cite[\S 1.3]{dkss}. In particular, attached to such a morphism $\pi:\bH\to \bG$ we have a right action
\begin{equation*}
  \pi_r:L^{\infty}(\bG)\to L^{\infty}(\bG)\otimes L^{\infty}(\bH)
\end{equation*}
as well as a left one,
\begin{equation*}
  \pi_l:L^{\infty}(\bG)\to L^{\infty}(\bG)\otimes L^{\infty}(\bH).
\end{equation*}

Throughout the paper, {\it closed} quantum subgroups $\bH\le \bG$ are as in \cite[Definition 2.6]{vs-imp}, referred to as {\it closed in the sense of Vaes} in \cite[Definition 3.1]{dkss} (to distinguish from a formally weaker version due to Woronowicz): those for which the dual morphism $\widehat{\bG}\to \widehat{\bH}$ corresponds to a comultiplication-intertwining embedding
\begin{equation*}
  L^{\infty}(\widehat{\bH})\subseteq L^{\infty}(\widehat{\bG}). 
\end{equation*}

The {\it centrality} of a quantum subgroup (or more generally, of a morphism) can be cast as the following paraphrase of \cite[Definition 2.3]{kss-cent}:

\begin{definition}
  A morphism $\pi:\bH\to \bG$ is {\it central} if the diagram
  \begin{equation}\label{eq:cent}
    \begin{tikzpicture}[auto,baseline=(current  bounding  box.center)]
      \path[anchor=base] 
      (0,0) node (l) {$L^{\infty}(\bG)$}
      +(4,.5) node (u) {$L^{\infty}(\bG)\otimes L^{\infty}(\bH)$}
      +(4,-.5) node (d) {$L^{\infty}(\bH)\otimes L^{\infty}(\bG)$}
      +(8,0) node (r) {$L^{\infty}(\bG)\otimes L^{\infty}(\bH)$}
      ;
      \draw[->] (l) to[bend left=6] node[pos=.5,auto] {$\scriptstyle \pi_r$} (u);
      \draw[->] (u) to[bend left=6] node[pos=.5,auto] {$\scriptstyle \id$} (r);
      \draw[->] (l) to[bend right=6] node[pos=.5,auto,swap] {$\scriptstyle \pi_l$} (d);
      \draw[->] (d) to[bend right=6] node[pos=.5,auto,swap] {$\scriptstyle \text{flip}$} (r);
    \end{tikzpicture}
  \end{equation}
commutes. 
\end{definition}

For a closed quantum subgroup $\iota:\bH\le \bG$ one can define the left and right quantum homogeneous $\bG$-spaces (e.g. \cite[Definition 4.1]{vs-imp}):
\begin{align*}
  L^{\infty}(\bG/\bH)&:=\{x\in L^{\infty}(\bG)\ |\ \iota_r(x) = x\otimes 1\}\numberthis\label{eq:homog}\\
  L^{\infty}(\bH\backslash \bG)&:=\{x\in L^{\infty}(\bG)\ |\ \iota_l(x) = 1\otimes x\}.\\
\end{align*}

Morphisms of locally compact quantum groups preserve unitary antipodes; this is well known, but we set out the claim here in precisely the form needed below (see e.g. \cite[equation (2.2b)]{ks-proj}).

\begin{lemma}\label{le:rinv}
  For an LCQG morphism $\pi:\bH\to \bG$ the diagrams
  \begin{equation*}
    \begin{tikzpicture}[auto,baseline=(current  bounding  box.center)]
      \path[anchor=base] 
      (0,0) node (l) {$L^{\infty}(\bG)$}
      +(5,.5) node (u) {$L^{\infty}(\bG)\otimes L^{\infty}(\bH)$}
      +(2,-.5) node (dl) {$L^{\infty}(\bG)$}
      +(6,-.5) node (d) {$L^{\infty}(\bH)\otimes L^{\infty}(\bG)$}
      +(10,0) node (r) {$L^{\infty}(\bG)\otimes L^{\infty}(\bH)$}
      ;
      \draw[->] (l) to[bend left=6] node[pos=.5,auto] {$\scriptstyle \pi_r$} (u);
      \draw[->] (u) to[bend left=6] node[pos=.5,auto] {$\scriptstyle R_{\bG}\otimes R_{\bH}$} (r);
      \draw[->] (l) to[bend right=6] node[pos=.5,auto,swap] {$\scriptstyle R_{\bG}$} (dl);
      \draw[->] (dl) to[bend right=6] node[pos=.5,auto,swap] {$\scriptstyle \pi_l$} (d);
      \draw[->] (d) to[bend right=6] node[pos=.5,auto,swap] {$\scriptstyle \text{flip}$} (r);
    \end{tikzpicture}
  \end{equation*}
  and 
  \begin{equation*}
    \begin{tikzpicture}[auto,baseline=(current  bounding  box.center)]
      \path[anchor=base] 
      (0,0) node (l) {$L^{\infty}(\bG)$}
      +(5,.5) node (u) {$L^{\infty}(\bH) \otimes L^{\infty}(\bG)$}
      +(2,-.5) node (dl) {$L^{\infty}(\bG)$}
      +(6,-.5) node (d) {$L^{\infty}(\bG)\otimes L^{\infty}(\bH)$}
      +(10,0) node (r) {$L^{\infty}(\bH) \otimes L^{\infty}(\bG)$}
      ;
      \draw[->] (l) to[bend left=6] node[pos=.5,auto] {$\scriptstyle \pi_l$} (u);
      \draw[->] (u) to[bend left=6] node[pos=.5,auto] {$\scriptstyle R_{\bH} \otimes R_{\bG} $} (r);
      \draw[->] (l) to[bend right=6] node[pos=.5,auto,swap] {$\scriptstyle R_{\bG}$} (dl);
      \draw[->] (dl) to[bend right=6] node[pos=.5,auto,swap] {$\scriptstyle \pi_r$} (d);
      \draw[->] (d) to[bend right=6] node[pos=.5,auto,swap] {$\scriptstyle \text{flip}$} (r);
    \end{tikzpicture}
  \end{equation*}
  commute.
\end{lemma}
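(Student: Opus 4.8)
The plan is to reduce both diagrams to two standard facts about the unitary antipode and then read the statement off by a purely formal manipulation; what needs care is invoking the right inputs, not the algebra itself. (Indeed the lemma is essentially \cite[equation (2.2b)]{ks-proj}; the following is a sketch of the argument behind it.)

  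The first input is the compatibility of $R_{\bG}$ with the comultiplication,
  \begin{equation*}
    (R_{\bG}\otimes R_{\bG})\circ\Delta_{\bG} \;=\; \mathrm{flip}\circ\Delta_{\bG}\circ R_{\bG},
  \end{equation*}
  which is part of the basic structure theory of an LCQG (e.g.\ \cite[Proposition 5.26]{kvcast}). The second input is that $\pi$ intertwines the unitary antipodes: the morphism $\pi:\bH\to\bG$ corresponds to a Hopf $*$-homomorphism $\pi^{u}\in\mathrm{Mor}(C^{u}_{0}(\bG),C^{u}_{0}(\bH))$ with $\pi^{u}\circ R^{u}_{\bG}=R^{u}_{\bH}\circ\pi^{u}$ — a standard feature of LCQG morphisms (see \cite{mrw}, \cite[\S 12]{kus-univ}, or the packaged statement \cite[equation (2.2b)]{ks-proj}) — and this identity descends to the reduced $C^{*}$- and von Neumann levels. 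It may equivalently be phrased as the $(\widehat R_{\bG}\otimes R_{\bH})$-invariance of the bicharacter associated with $\pi$. Finally I would recall the description of the two actions connecting these inputs to the statement: $\pi_{r}$ (resp.\ $\pi_{l}$) is obtained from $\Delta_{\bG}$ by applying the appropriate half-universal form of $\pi$ to the second (resp.\ the first) tensor leg, which is the defining recipe for the right and left translation actions of $\bH$ on $\bG$, with leg orderings exactly as drawn in the two diagrams.

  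Granting all this, the first diagram is immediate. Writing $\pi$ for the leg-wise morphism,
  \begin{align*}
    (R_{\bG}\otimes R_{\bH})\circ\pi_{r}
    &= (\id\otimes\pi)\circ(R_{\bG}\otimes R_{\bG})\circ\Delta_{\bG}\\
    &= (\id\otimes\pi)\circ\mathrm{flip}\circ\Delta_{\bG}\circ R_{\bG}\\
    &= \mathrm{flip}\circ\pi_{l}\circ R_{\bG},
  \end{align*}
  where the first equality uses $R_{\bH}\circ\pi=\pi\circ R_{\bG}$ together with $\pi_{r}=(\id\otimes\pi)\circ\Delta_{\bG}$, the second uses the comultiplication identity above, and the third combines the naturality of the flip with $\pi_{l}=(\pi\otimes\id)\circ\Delta_{\bG}$. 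The second diagram follows from the first by composing with flips and using $R_{\bG}^{2}=\id=R_{\bH}^{2}$, or equally well by rerunning the same three-line computation with the roles of $\pi_{r}$ and $\pi_{l}$ interchanged.

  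The step I expect to be the main obstacle is not the displayed computation but the bookkeeping around it: making precise the half-universal lifts of $\Delta_{\bG}$, checking non-degeneracy and behaviour under passage through multiplier algebras, and verifying that the resulting maps extend normally to $L^{\infty}(\bG)$ so that the identities are meaningful at the von Neumann level at which the lemma is stated. The one genuinely non-formal ingredient is that a quantum-group morphism intertwines the unitary antipode; rather than reprove it I would import it wholesale from \cite{mrw,kus-univ}, or quote it in packaged form from \cite[(2.2b)]{ks-proj}.
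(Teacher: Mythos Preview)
Your proposal is correct and takes essentially the same approach as the paper: the paper's proof is a one-sentence citation of \cite[Theorems 5.3 and 5.5]{mrw}, observing that $\pi_r$ and $\pi_l$ are both governed by the same bicharacter, from which the diagrams follow; you unpack this into the explicit universal-level computation using $\pi^u\circ R^u_{\bG}=R^u_{\bH}\circ\pi^u$ and the anti-comultiplicativity of $R_{\bG}$, which is exactly the content behind that citation (and you even note the bicharacter formulation as an equivalent phrasing). Your caveats about half-universal lifts and normal extension are apt but, as you suspect, standard bookkeeping already handled in the cited sources.
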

\begin{proof}
  This follows from \cite[Theorems 5.3 and 5.5]{mrw}, which describe $\pi_r$ and $\pi_l$ in terms of a single object attached to the morphism $\pi$ (a {\it bicharacter}, in the language of \cite[\S 3]{mrw}).
\end{proof}

In the discussion below, we follow \cite{kvcast} in denoting by
\begin{itemize}
\item $\sigma_t$ (or $\sigma_{\bG,t}$ when wishing to emphasize the group) the modular automorphism group of a left Haar weight \cite[\S 1.3]{kvcast};
\item $\sigma'_t=\sigma'_{\bG,t}$ the modular group of a right Haar weight \cite[p.846]{kvcast};
\item $\tau_t=\tau_{\bG,t}$ the scaling group of $\bG$ \cite[Terminology 5.42]{kvcast} and by $\nu=\nu_{\bG}$ its {\it scaling constant} \cite[Terminology 7.16]{kvcast}. 
\end{itemize}

In addition to the antipode-intertwining properties noted in \Cref{le:rinv}, it will also be useful to record the compatibility between $\pi_{l,r}$ and these one-parameter groups.

\begin{lemma}\label{le:act1par}
  For an LCQG morphism $\pi:\bH\to \bG$ we have
  \begin{equation}\label{eq:pils}
    \pi_l\sigma_{\bG,t} = (\tau_{\bH,t}\otimes \sigma_{\bG,t})\pi_l:L^{\infty}(\bG)\to L^{\infty}(\bH)\otimes L^{\infty}(\bG).
  \end{equation}
  \begin{equation}\label{eq:pirs}
    \pi_r\sigma'_{\bG,t} = (\sigma'_{\bG,t}\otimes \tau_{\bH,-t})\pi_r:L^{\infty}(\bG)\to L^{\infty}(\bG)\otimes L^{\infty}(\bH).
  \end{equation}
  \begin{equation}\label{eq:pilt}
    \pi_l\tau_{\bG,t} = (\tau_{\bH,t}\otimes \tau_{\bG,t})\pi_l:L^{\infty}(\bG)\to L^{\infty}(\bH)\otimes L^{\infty}(\bG).
  \end{equation}
  \begin{equation}\label{eq:pirt}
    \pi_r\tau_{\bG,t} = (\tau_{\bG,t}\otimes \tau_{\bH,t})\pi_r:L^{\infty}(\bG)\to L^{\infty}(\bG)\otimes L^{\infty}(\bH).
  \end{equation}
\end{lemma}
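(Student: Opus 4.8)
The plan is to reduce all four identities to the known transformation behaviour of the multiplicative unitaries $W_{\bG},W_{\bH}$ and of the bicharacter attached to $\pi$. Recall, as in the proof of \Cref{le:rinv}, that by \cite[Theorems 5.3 and 5.5]{mrw} both $\pi_l$ and $\pi_r$ are completely determined by a single unitary bicharacter $V=V^{\pi}$ (as in \cite[\S 3]{mrw}), via explicit formulas that also involve $W_{\bG}$ and $W_{\bH}$. Specializing $\pi=\id_{\bG}$ turns $\pi_l$ and $\pi_r$ into $\Delta_{\bG}$, in which case \cref{eq:pils,eq:pirs,eq:pilt,eq:pirt} become the well-known relations
\begin{equation*}
  \Delta\sigma_{t}=(\tau_{t}\otimes\sigma_{t})\Delta,\qquad
  \Delta\sigma'_{t}=(\sigma'_{t}\otimes\tau_{-t})\Delta,\qquad
  \Delta\tau_{t}=(\tau_{t}\otimes\tau_{t})\Delta
\end{equation*}
of \cite[Proposition 6.8]{kvcast}. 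The general statement is to be obtained in exactly the same spirit.

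The inputs are, on the $\bG$-side, the standard transformation rules for $W_{\bG}$ under the scaling, modular and right-modular groups and their duals — $W_{\bG}$ is fixed by $\tau_{\bG,t}\otimes\widehat{\tau}_{\bG,t}$, interacts with $\sigma_{\bG,t}$ and $\sigma'_{\bG,t}$ through the dual scaling group, and so forth — all of which are recorded in \cite[\S 5--7]{kvcast}; and, for the morphism itself, the corresponding scaling-invariance of the bicharacter $V$ from \cite[\S 3--5]{mrw}, together with the compatibility of $V$ with duality (which replaces $\pi$ by $\widehat{\pi}:\widehat{\bG}\to\widehat{\bH}$). Substituting these into the \cite{mrw} formulas for $\pi_{l}$ and $\pi_{r}$ and commuting each one-parameter group across, one sees that the $L^{\infty}(\bG)$-leg of the output inherits whatever group one began with (hence $\sigma_{\bG,t}$ in \cref{eq:pils}, $\sigma'_{\bG,t}$ in \cref{eq:pirs}, $\tau_{\bG,t}$ in \cref{eq:pilt,eq:pirt}), while the $\bH$-dependence of $\pi_{l,r}$ factors through the $\bH$-leg of $V$, on which the relevant one-parameter group is the \emph{scaling} group of the dual; under duality that group is $\tau_{\bH,\pm t}$, which is why $\tau_{\bH,t}$ appears in \cref{eq:pils,eq:pilt,eq:pirt} and $\tau_{\bH,-t}$ in \cref{eq:pirs}. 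The placement of the $\bH$-leg (first tensorand for $\pi_l$, second for $\pi_r$) and the choice of $\sigma$ versus $\sigma'$ are likewise forced, exactly as in the $\pi=\id_{\bG}$ relations above.

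For the actual verification it is enough to test each identity on a weakly dense $*$-subalgebra and extend by normality, or — more cleanly — to slice off the $L^{\infty}(\bH)$-leg against an arbitrary $\omega\in L^{\infty}(\bH)_{*}$, reducing to scalar identities already available in \cite{kvcast,mrw}. I expect the only genuine obstacle to be bookkeeping: \cite{mrw,kus-univ,dkss} employ several mutually incompatible conventions for bicharacters and for the unitaries implementing $\pi_l$ and $\pi_r$, so one must settle on a single convention, locate the scaling-group invariance of $V$ in precisely that convention, and then track orientations and signs with care. Once that is pinned down, the computation is mechanical.
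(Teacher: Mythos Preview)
Your approach---working directly with the bicharacter $V$ and the implementing formulas from \cite{mrw}---is a legitimate route and should succeed, but the paper takes a considerably cleaner path that sidesteps all the bookkeeping you anticipate. The paper lifts everything to the universal level: there the morphism is an honest Hopf-$*$-homomorphism $\pi^u:C_0^u(\bG)\to M(C_0^u(\bH))$, so that $\pi_l^u=(\pi^u\otimes\id)\Delta_{\bG}$ literally. One then starts from the universal version of the comultiplication relation $\Delta_{\bG}\sigma_{\bG,t}^u=(\tau_{\bG,t}^u\otimes\sigma_{\bG,t}^u)\Delta_{\bG}$ (\cite[Proposition 9.2]{kus-univ}), applies $\pi^u$ to the left leg, invokes the single intertwining fact $\pi^u\tau_{\bG,t}^u=\tau_{\bH,t}^u\pi^u$ (\cite[Proposition 3.10]{mrw}), and descends to the von Neumann level via \cite[Proposition 12.1]{kus-univ}. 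No bicharacter transformation rules, no sign-tracking, no convention reconciliation.

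What your approach buys is that it stays at the reduced/von Neumann level throughout and makes the role of the bicharacter explicit. What the paper's approach buys is that the ``bookkeeping obstacle'' you flag simply disappears: once $\pi_l^u$ is written as $(\pi^u\otimes\id)\Delta_{\bG}$, the four identities are immediate corollaries of the $\pi=\id$ relations you already quoted, plus a single scaling-group intertwining property of $\pi^u$. Your proposal is correct in outline but, as you yourself note, is more a plan than a proof; the paper's argument is short enough to carry out in full.
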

\begin{proof}
  The style of proof is the same for all of these, so we focus on \Cref{eq:pils}.

  All three one-parameter groups lift to the universal quantum-group function algebras $C^u_0(\bG)$ (and analogue for $\bH$) of \cite{kus-univ}: see \cite[\S 8]{kus-univ} for the modular groups $\sigma$ and $\sigma'$ and \cite[\S 9]{kus-univ} for $\tau$.

  At the universal level we have \cite[Proposition 9.2]{kus-univ}
  \begin{equation}\label{eq:delsuniv}
    \Delta_{\bG} \sigma^u_{\bG,t} = (\tau^u_{\bG,t}\otimes \sigma^u_{\bG,t})\Delta_{\bG}:C^u_0(\bG)\to M(C^u_0(\bG)\otimes C^u_0(\bG)). 
  \end{equation}
  Now apply the universal incarnation
  \begin{equation*}
    \pi^u:C^u_0(\bG)\to M(C_0(\bH)^u)
  \end{equation*}
  of $\pi$ (\cite[\S 4]{mrw}, \cite[\S 12]{kus-univ}) to the left leg of \Cref{eq:delsuniv} to obtain 
  \begin{equation*}
    \pi^u_l \sigma^u_{\bG,t} = (\pi^u\tau^u_{\bG,t}\otimes \sigma^u_{\bG,t})\Delta_{\bG}:C^u_0(\bG)\to M(C_0(\bH)^u\otimes C^u_0(\bG)),
  \end{equation*}
  where
  \begin{equation*}
    \pi^u_l:=(\pi^u\otimes\id)\Delta_{\bG}.
  \end{equation*}
  Next, use the scaling-group-intertwining property
  \begin{equation*}
    \pi^u\tau^u_{\bG,t}=\tau^u_{\bH,t}\pi^u
  \end{equation*}
  of $\pi^u$ (which follows, for instance, from \cite[Proposition 3.10]{mrw}) on the right-hand side to produce
  \begin{align*}
    \pi^u_l \sigma^u_{\bG,t} &= (\tau^u_{\bH,t} \pi^u\otimes \sigma^u_{\bG,t})\Delta_{\bG}\\
                             &=(\tau^u_{\bH,t} \otimes \sigma^u_{\bG,t})\pi_l^u.
  \end{align*}
  Finally, to conclude, note that this reduces precisely to the desired identity \Cref{eq:pils}, because
  \begin{equation*}
    \begin{tikzpicture}[auto,baseline=(current  bounding  box.center)]
      \path[anchor=base] 
      (0,0) node (lu) {$C^u_0(\bG)$}
      +(4,0) node (ru) {$M(C_0(\bH)^u\otimes C^u_0(\bG))$}
      +(0,-1) node (ld) {$L^{\infty}(\bG)$}
      +(4,-1) node (rd) {$L^{\infty}(\bH)\otimes L^{\infty}(\bG)$}
      ;
      \draw[->] (lu) to[bend left=0] node[pos=.5,auto] {$\scriptstyle \pi^u_l$} (ru);
      \draw[->] (ld) to[bend left=0] node[pos=.5,auto,swap] {$\scriptstyle \pi_l$} (rd);
      \draw[->] (lu) to[bend left=0] node[pos=.5,auto] {$\scriptstyle $} (ld);
      \draw[->] (ru) to[bend left=0] node[pos=.5,auto] {$\scriptstyle $} (rd);
    \end{tikzpicture}
  \end{equation*}
commutes \cite[Proposition 12.1]{kus-univ}. 
\end{proof}

An immediate consequence of \Cref{le:act1par} and the definitions of the quantum homogeneous spaces $\bG/\bH$ and $\bH\backslash \bG$:

\begin{corollary}\label{cor:homoginv}
  For any closed locally compact quantum subgroup $\bH\le \bG$
  \begin{enumerate}[(1)]
  \item $L^{\infty}(\bG/\bH)\subseteq L^{\infty}(\bG)$ is invariant under $\tau_{\bG,t}$ and $\sigma'_{\bG,t}$;
  \item and similarly, $L^{\infty}(\bH\backslash \bG)\subseteq L^{\infty}(\bG)$ is invariant under $\tau_{\bG,t}$ and $\sigma_{\bG,t}$. \qedhere
  \end{enumerate}
\end{corollary}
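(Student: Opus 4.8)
The plan is to chase the defining conditions of the two quantum homogeneous spaces through the intertwining relations of \Cref{le:act1par}, specialized to $\pi=\iota$, the inclusion $\bH\le\bG$. I expect essentially no obstacle here: the whole argument reduces to the observation that each $\tau_{\bH,t}$ fixes the unit $1\in L^{\infty}(\bH)$, so the ``$\bH$-leg'' of the relevant relations collapses.

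Concretely, for part (1) I would take $x\in L^{\infty}(\bG/\bH)$, i.e.\ $\iota_r(x)=x\otimes 1$ by \Cref{eq:homog}, and feed this into \Cref{eq:pirt}:
\[
  \iota_r(\tau_{\bG,t}(x)) = (\tau_{\bG,t}\otimes\tau_{\bH,t})\iota_r(x) = (\tau_{\bG,t}\otimes\tau_{\bH,t})(x\otimes 1) = \tau_{\bG,t}(x)\otimes 1,
\]
whence $\tau_{\bG,t}(x)\in L^{\infty}(\bG/\bH)$; the same computation with \Cref{eq:pirs} gives $\iota_r(\sigma'_{\bG,t}(x)) = (\sigma'_{\bG,t}\otimes\tau_{\bH,-t})(x\otimes 1) = \sigma'_{\bG,t}(x)\otimes 1$, so $L^{\infty}(\bG/\bH)$ is also $\sigma'_{\bG,t}$-invariant. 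Part (2) is entirely symmetric, now using \Cref{eq:pilt,eq:pils} together with $\iota_l(x)=1\otimes x$: one gets $\iota_l(\tau_{\bG,t}(x))=(\tau_{\bH,t}\otimes\tau_{\bG,t})(1\otimes x)=1\otimes\tau_{\bG,t}(x)$ and $\iota_l(\sigma_{\bG,t}(x))=(\tau_{\bH,t}\otimes\sigma_{\bG,t})(1\otimes x)=1\otimes\sigma_{\bG,t}(x)$.

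The one point worth flagging is merely that \Cref{le:act1par} pairs the one-parameter groups in precisely the way that makes these two combinations work; the ``crossed'' pairings --- e.g.\ $\sigma_{\bG,t}$ acting on $L^{\infty}(\bG/\bH)$, or $\sigma'_{\bG,t}$ on $L^{\infty}(\bH\backslash\bG)$ --- are not asserted and would fail in general, which is why the corollary lists only two invariance statements on each side. Since all the ingredients of \Cref{le:act1par} are already in place, there is nothing further to verify.
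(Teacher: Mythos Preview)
Your proposal is correct and follows exactly the approach the paper intends: the corollary is stated as an immediate consequence of \Cref{le:act1par} together with the definitions \Cref{eq:homog}, and you have simply spelled out the one-line verifications that the paper leaves implicit. There is nothing to add.
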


\section{Relative modular elements}\label{se:relmodel}

One of the main results of this section (to be strengthened later, when more language has been introduced) is 

\begin{theorem}\label{th:resmod}
  Let $\bH\le \bG$ be a closed central subgroup of a locally compact quantum group. The modular element of $\bG$ coincides with that of $\bG/\bH$. 
\end{theorem}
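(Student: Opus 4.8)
The plan is to realize \emph{both} Haar weights of $\bG$ as pullbacks of the Haar weights of $\bG/\bH$ along a single operator-valued weight $L^{\infty}(\bG)\to L^{\infty}(\bG/\bH)$; the point is that centrality collapses the two operator-valued weights that \emph{a priori} compute $\varphi_{\bG/\bH}$ and $\psi_{\bG/\bH}$ into one and the same map, whereupon the claim falls out of the uniqueness clause in the definition of $\delta_{\bG}$. First I would record the structural facts about $L^{\infty}(\bG/\bH)\subseteq L^{\infty}(\bG)$: by \Cref{eq:homog} it is the fixed-point algebra $\{x:\iota_r(x)=x\otimes 1\}$, the comultiplication $\Delta_{\bG}$ restricts on it to $\Delta_{\bG/\bH}$, and (by the morphism-compatibility of unitary antipodes underlying \Cref{le:rinv}) $R_{\bG}$ restricts to $R_{\bG/\bH}$; moreover, since \Cref{eq:cent} applied to $\iota$ gives $\iota_l=\mathrm{flip}\circ\iota_r$, one also has $\{x:\iota_l(x)=1\otimes x\}=L^{\infty}(\bG/\bH)$. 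In particular $\delta_{\bG/\bH}$, originally a strictly positive operator affiliated with $L^{\infty}(\bG/\bH)$, is strictly positive and affiliated with $L^{\infty}(\bG)$, hence an admissible competitor in the definition of $\delta_{\bG}$; so by uniqueness it is enough to prove $\psi_{\bG}=\varphi_{\bG,\delta_{\bG/\bH}}$.

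Next I would introduce the operator-valued weights
\[
  T:=(\id\otimes\varphi_{\bH})\,\iota_r,\qquad T':=(\psi_{\bH}\otimes\id)\,\iota_l
\]
from $L^{\infty}(\bG)_+$ to $\widehat{L^{\infty}(\bG/\bH)}_+$. Left-invariance of $\varphi_{\bH}$ in the form $(\id\otimes\varphi_{\bH})\Delta_{\bH}=\varphi_{\bH}(\cdot)1$, together with the coaction identity $(\iota_r\otimes\id)\iota_r=(\id\otimes\Delta_{\bH})\iota_r$, shows $T$ lands in $L^{\infty}(\bG/\bH)$ and commutes with left translation ($\Delta_{\bG}\circ T=(\id\otimes T)\circ\Delta_{\bG}$), so that $\varphi_{\bG/\bH}\circ T$ is a left-invariant n.s.f.\ weight on $\bG$ and therefore equals $\varphi_{\bG}$ after rescaling $\varphi_{\bG/\bH}$ (a rescaling which leaves $\delta_{\bG/\bH}$ unchanged, as it rescales $\psi_{\bG/\bH}$ too); symmetrically $T'$ lands in $L^{\infty}(\bG/\bH)$. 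Taking $\psi_{\bG/\bH}:=\varphi_{\bG/\bH}\circ R_{\bG/\bH}$ and combining $\psi_{\bG}=\varphi_{\bG}\circ R_{\bG}$ with the intertwining $T\circ R_{\bG}=R_{\bG/\bH}\circ T'$ — a direct consequence of the first diagram in \Cref{le:rinv} and of $\varphi_{\bH}\circ R_{\bH}=\psi_{\bH}$ — yields $\psi_{\bG}=\psi_{\bG/\bH}\circ T'$, with $\varphi_{\bG/\bH},\psi_{\bG/\bH}$ now the canonically normalized Haar weights of $\bG/\bH$ against which \Cref{eq:rvsl} defines $\delta_{\bG/\bH}$.

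Centrality enters only now: $\iota_l=\mathrm{flip}\circ\iota_r$ and the unimodularity of a central quantum subgroup ($\psi_{\bH}=\varphi_{\bH}$) give $T'=(\psi_{\bH}\otimes\id)\iota_l=(\id\otimes\psi_{\bH})\iota_r=(\id\otimes\varphi_{\bH})\iota_r=T$. Hence, for $x\in L^{\infty}(\bG)_+$,
\begin{align*}
  \psi_{\bG}(x)
  &=\psi_{\bG/\bH}\bigl(T(x)\bigr)
  =\varphi_{\bG/\bH}\bigl(\delta_{\bG/\bH}^{1/2}\,T(x)\,\delta_{\bG/\bH}^{1/2}\bigr)\\
  &=\varphi_{\bG/\bH}\bigl(T(\delta_{\bG/\bH}^{1/2}\,x\,\delta_{\bG/\bH}^{1/2})\bigr)
  =\varphi_{\bG}\bigl(\delta_{\bG/\bH}^{1/2}\,x\,\delta_{\bG/\bH}^{1/2}\bigr),
\end{align*}
the second equality being \Cref{eq:rvsl} for $\bG/\bH$ and the third the $L^{\infty}(\bG/\bH)$-bimodularity of $T$, extended from bounded multipliers to the strictly positive affiliated operator $\delta_{\bG/\bH}$ by spectral approximation. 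Thus $\psi_{\bG}=\varphi_{\bG,\delta_{\bG/\bH}}$, and uniqueness of the modular element gives $\delta_{\bG}=\delta_{\bG/\bH}$.

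The hard part will be the weight-theoretic bookkeeping of the second paragraph: checking that $T$ and $T'$ are genuinely normal, semifinite, faithful operator-valued weights with range in $L^{\infty}(\bG/\bH)$, that $\varphi_{\bG/\bH}\circ T$ is semifinite, and — most delicately — that the weight $\varphi_{\bG/\bH}$ so obtained is exactly \emph{the} left Haar weight of the quotient $\bG/\bH$ as built in the literature, so that \Cref{eq:rvsl} applies to it verbatim; this is where the precise form of the quotient construction should be invoked. By contrast, the bimodularity extension in the displayed computation and the fact that a central quantum subgroup is unimodular are comparatively routine (the latter being standard, e.g.\ because such a subgroup is cocommutative).
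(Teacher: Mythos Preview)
Your approach is essentially the paper's: your $T,T'$ are exactly the paper's $\cT_l,\cT_r$, the centrality argument $T=T'$ is identical, and your derivation of $\varphi_{\bG}=\varphi_{\bG/\bH}\circ T$ and $\psi_{\bG}=\psi_{\bG/\bH}\circ T'$ is precisely the content of \Cref{le:lrhaar}. The one divergence is the endgame. The paper avoids your bimodularity-with-unbounded-multipliers step entirely by passing to Connes cocycles: from $\varphi_{\bG}=\varphi_{\bG/\bH}\circ\cT$ and $\psi_{\bG}=\psi_{\bG/\bH}\circ\cT$ one gets
\[
\nu_{\bG}^{\frac12 it^2}\delta_{\bG}^{it}=(D\psi_{\bG}:D\varphi_{\bG})_t=(D\psi_{\bG/\bH}\circ\cT:D\varphi_{\bG/\bH}\circ\cT)_t=(D\psi_{\bG/\bH}:D\varphi_{\bG/\bH})_t=\nu_{\bG/\bH}^{\frac12 it^2}\delta_{\bG/\bH}^{it},
\]
the crucial third equality being Haagerup's \cite[Theorem~4.7]{haag1}; the quadratic and linear parts then separate. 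This route is cleaner precisely because it sidesteps the point you flagged as ``comparatively routine'': pushing $\delta_{\bG/\bH}^{1/2}$ through $T$ by spectral approximation is not quite routine, since the weight $\varphi_{\delta}(\cdot)=\varphi(\delta^{1/2}\cdot\delta^{1/2})$ has a specific technical meaning (as in \cite{vaes-rn}) and one must check the approximants interact correctly with both $T$ and $\varphi_{\bG/\bH}$ in the extended positive part. Your argument can be made to work, but the cocycle route buys you a one-line appeal to an off-the-shelf theorem in place of that verification.
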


\begin{remark}\label{re:howcls}
  To put \Cref{th:resmod} into some perspective, with centrality being the last of a series of progressively more stringent conditions, note that
  \begin{itemize}
  \item If $\bH\trianglelefteq \bG$ is a closed normal subgroup then the modular element $\delta_{\bG}$ of $\bG$ {\it restricts to} $\delta_{\bH}$ in the sense of \cite[Definition 3.3]{bcv} (by \cite[Theorem 3.4 and Corollary 3.9]{bcv}).

    The restriction terminology employed there is chosen so that classically it specializes back to what one would guess. The modular function of a locally compact group $\bG$ is typically denoted by $\Delta_{\bG}$ or plain $\Delta$ (\cite[\S A.3]{bdv}, \cite[\S 1.4]{de}, \cite[\S 2.4]{folland}, etc.). Here, in order to avoid confusion with the comultiplication, we write
    \begin{equation*}
      \delta_{\bG}(x):=\Delta(x)^{-1},\ x\in \bG. 
    \end{equation*}
    This is compatible with the previous use of the symbol $\delta$, in the general context of quantum groups: on the one hand we have the relation \Cref{eq:rvsl} between left and right Haar weights, while on the other hand, classically, we have
    \begin{equation*}
      d\mu_{right}(x) = \Delta(x)^{-1}d\mu_{left}(x)
    \end{equation*}
    by \cite[Proposition 2.31]{folland}.    

    As the name suggests, then, $\delta_{\bG}$ restricting to $\delta_{\bH}$ as in \cite[Definition 3.3]{bcv} means precisely that $\delta_{\bH}=\delta_{\bG}|_{\bH}$ for ordinary locally compact groups.
    
  \item If furthermore $\bH\trianglelefteq \bG$ is unimodular, it follows that the modular function {\it factors through $\bG\to \bG/\bH$}, in the sense that
    \begin{equation*}
      \delta_{\bG}^{it}\in L^{\infty}(\bG/\bH),\ \forall t\in \bR;
    \end{equation*}
    in other words, $\delta$ is affiliated with the von Neumann subalgebra $L^{\infty}(\bG/\bH)\subseteq L^{\infty}(\bG)$. This follows from \cite[Theorem 3.4, condition (2)]{bcv} and classically it means that the morphism
    \begin{equation*}
      \delta_{\bG}:\bG\to (\bR^{\times},\cdot)
    \end{equation*}
    factors through $\bG/\bH$. 
  \item Finally, it takes centrality to ensure that that factorization in fact coincides with the modular function
    \begin{equation*}
      \delta_{\bG/\bH}:\bG/\bH\to (\bR^{\times},\cdot). 
    \end{equation*}
  \end{itemize}
\end{remark}

Before moving on to the proof of \Cref{th:resmod}, note the following immediate consequence.

\begin{corollary}\label{cor:unimodquot}
  If $\bH\le \bG$ is a closed central subgroup of a locally compact quantum group then $\bG$ is unimodular if and only if $\bG/\bH$ is. \qedhere
\end{corollary}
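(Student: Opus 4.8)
The plan is to deduce the corollary directly from \Cref{th:resmod} together with the fact that unimodularity of an LCQG is, by definition, the triviality of its modular element. First I would recall why that is: $\bG$ is unimodular precisely when the left Haar weight $\varphi_{\bG}$ is also right-invariant, so that one may take $\psi_{\bG}=\varphi_{\bG}$, whereupon the defining relation \Cref{eq:rvsl} degenerates to $\varphi_{\bG}(\cdot)=\varphi_{\bG}(\delta^{1/2}\cdot\,\delta^{1/2})$ and forces $\delta_{\bG}=1$ by the uniqueness built into the definition of the modular element; conversely $\delta_{\bG}=1$ makes $\varphi_{\bG}$ right-invariant. Since $\bH\le\bG$ central is in particular closed normal, the quotient $\bG/\bH$ is again an LCQG (as recalled in the introduction), hence carries its own modular element $\delta_{\bG/\bH}$, a strictly positive group-like operator affiliated with $L^{\infty}(\bG/\bH)\subseteq L^{\infty}(\bG)$.

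Now \Cref{th:resmod} asserts exactly that $\delta_{\bG}=\delta_{\bG/\bH}$ — an equality of strictly positive affiliated operators inside $L^{\infty}(\bG)$, the inclusion $L^{\infty}(\bG/\bH)\subseteq L^{\infty}(\bG)$ rendering the comparison unambiguous. Feeding this into the previous paragraph yields
\begin{equation*}
  \bG\text{ unimodular}\iff\delta_{\bG}=1\iff\delta_{\bG/\bH}=1\iff\bG/\bH\text{ unimodular},
\end{equation*}
which is the claim.

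There is no genuine obstacle here beyond \Cref{th:resmod} itself; the only point deserving care is the bookkeeping — that "the modular element equals $1$" is the actual content of unimodularity on both sides, and that the identification $\delta_{\bG}=\delta_{\bG/\bH}$ supplied by the theorem is between objects of the same type (strictly positive, affiliated, group-like), so that triviality of one transports to triviality of the other. Once \Cref{th:resmod} is in hand, all of this is immediate.
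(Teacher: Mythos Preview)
Your argument is correct and matches the paper's approach exactly: the paper states \Cref{cor:unimodquot} as an ``immediate consequence'' of \Cref{th:resmod} with no further proof, and your proposal simply spells out that immediacy via the equivalence $\delta=1\iff$ unimodular.
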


As yet another consequence, we have the unimodularity of nilpotent locally compact (classical) groups. The result is well known, but the proofs one encounters tend to be different in flavor: \cite[Corollary 2, p.318]{howe-four-nilp} leverages some structure results on nilpotent groups, while \cite[Example A.3.7]{bdv} uses (via \cite[Exercise A.8.10]{bdv}) the fact that nilpotent groups have {\it subexponential growth}.

\begin{corollary}
  Nilpotent locally compact groups are unimodular. 
\end{corollary}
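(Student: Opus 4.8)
The plan is to induct on the length $n$ of the ascending central series
\begin{equation*}
  \{1\}=Z_0\le Z_1\le\cdots\le Z_n=\bG,\qquad Z_{i+1}/Z_i=Z(\bG/Z_i),
\end{equation*}
using \Cref{cor:unimodquot} to climb one cocentral quotient at a time. First I would note that each $Z_i$ is a closed subgroup: $Z_1=Z(\bG)$ is an intersection of centralizers of points and hence closed, and inductively $Z_{i+1}$ is the preimage of the (closed) center of $\bG/Z_i$ under the continuous quotient map. In particular every $Z_i\trianglelefteq\bG$ is a closed central subgroup, and all the subquotients appearing are again locally compact.

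For the base case $n\le 1$ the group $\bG$ is abelian (or trivial), hence unimodular. For the inductive step, assume the statement for all nilpotent locally compact groups of length $<n$ and let $\bG$ have length $n\ge 2$. Then $Z:=Z(\bG)=Z_1$ is a proper closed central subgroup, and the ascending central series of $\bG/Z$ is $Z_{i+1}/Z$ for $0\le i\le n-1$, so $\bG/Z$ is nilpotent of length $n-1$. By the induction hypothesis $\bG/Z$ is unimodular, and since $Z\le\bG$ is closed and central, \Cref{cor:unimodquot} yields that $\bG$ is unimodular as well.

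The only point that requires care — the main obstacle, such as it is — is the passage between the classical and quantum frameworks: one must check that a classical closed central subgroup $Z\le\bG$, viewed inside the LCQG with $L^{\infty}(\bG)$ the usual commutative von Neumann algebra, genuinely is a closed central quantum subgroup in the senses of \cite[Definition 2.6]{vs-imp} and \cite[Definition 2.3]{kss-cent}, that the quantum homogeneous space $\bG/Z$ agrees with the classical quotient, and that the quantum modular element $\delta_{\bG}$ is the inverse of the classical modular function so that ``unimodular'' means the same thing on both sides. All of this is standard, the quantum definitions having been arranged to specialize correctly (cf. \Cref{re:howcls}); once it is invoked, \Cref{cor:unimodquot} applies verbatim and the induction closes.
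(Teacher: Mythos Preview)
Your proof is correct and follows the same approach as the paper: induction on the length of the ascending central series, with \Cref{cor:unimodquot} supplying the inductive step from $\bG/Z(\bG)$ to $\bG$. One minor slip: the assertion that ``every $Z_i\trianglelefteq\bG$ is a closed central subgroup'' is false for $i\ge 2$ (only $Z_1$ is central in $\bG$), but since your argument only ever uses the centrality of $Z_1$, this does not affect the proof.
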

\begin{proof}
  Filter the nilpotent group $\bG$ with its ascending central series
  \begin{equation*}
    \{1\}\le Z(\bG)\le \cdots\le \bG
  \end{equation*}
  (finite, by the nilpotence assumption), and proceed by induction on the length of that series: the base case of abelian groups is trivial, and the induction step passes from a quotient to a central extension using \Cref{cor:unimodquot}.
\end{proof}

For a closed quantum subgroup $\bH\le \bG$ we will work with the two operator-valued weights $\cT_{l}$ and $\cT_{r}$ defined by
\begin{equation}\label{eq:tr}
  \begin{tikzpicture}[auto,baseline=(current  bounding  box.center)]
    \path[anchor=base] 
    (0,0) node (l) {$L^{\infty}(\bG)$}
    +(4,.5) node (u) {$L^{\infty}(\bG)\otimes L^{\infty}(\bH)$}
    +(4,-.5) node (d) {$\widehat{L^{\infty}(\bG/\bH)}_+$}
    +(8,0) node (r) {$\widehat{L^{\infty}(\bG)}_+,$}
    ;
    \draw[->] (l) to[bend left=6] node[pos=.5,auto] {$\scriptstyle \pi_r$} (u);
    \draw[->] (u) to[bend left=6] node[pos=.5,auto] {$\scriptstyle \id\otimes \varphi_{\bH}$} (r);
    \draw[->] (l) to[bend right=6] node[pos=.5,auto,swap] {$\scriptstyle \cT_{l}$} (d);
    \draw[->] (d) to[bend right=6] node[pos=.5,auto,swap] {$\scriptstyle \subseteq$} (r);
  \end{tikzpicture}
\end{equation}
with the `$l$' subscript indicating left invariance or mapping to the {\it left} coset space, and similarly,
\begin{equation}\label{eq:tl}
  \begin{tikzpicture}[auto,baseline=(current  bounding  box.center)]
    \path[anchor=base] 
    (0,0) node (l) {$L^{\infty}(\bG)$}
    +(4,.5) node (u) {$L^{\infty}(\bH)\otimes L^{\infty}(\bG)$}
    +(4,-.5) node (d) {$\widehat{L^{\infty}(\bH\backslash \bG)}_+$}
    +(8,0) node (r) {$\widehat{L^{\infty}(\bG)}_+$.}
    ;
    \draw[->] (l) to[bend left=6] node[pos=.5,auto] {$\scriptstyle \pi_l$} (u);
    \draw[->] (u) to[bend left=6] node[pos=.5,auto] {$\scriptstyle \psi_{\bH}\otimes\id$} (r);
    \draw[->] (l) to[bend right=6] node[pos=.5,auto,swap] {$\scriptstyle \cT_{r}$} (d);
    \draw[->] (d) to[bend right=6] node[pos=.5,auto,swap] {$\scriptstyle \subseteq$} (r);
  \end{tikzpicture}
\end{equation}

\begin{lemma}\label{le:intert-gen}
  For a closed quantum subgroup $\bH\le \bG$ of a closed quantum subgroup we have
  \begin{equation*}
    R_{\bG}\circ \cT_{l} = \cT_{r}\circ R_{\bG}\quad\text{and}\quad R_{\bG}\circ \cT_{r} = \cT_{l}\circ R_{\bG}.
  \end{equation*}
\end{lemma}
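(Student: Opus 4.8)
The plan is a diagram chase resting on two inputs: the $R_{\bG}$-intertwining of $\pi_r$ and $\pi_l$ recorded in \Cref{le:rinv}, and the standard fact (\cite[\S\S 5, 7]{kvcast}) that the unitary antipode of $\bH$ interchanges its Haar weights, $\psi_{\bH} = \varphi_{\bH}\circ R_{\bH}$ — equivalently $\varphi_{\bH} = \psi_{\bH}\circ R_{\bH}$, since $R_{\bH}^2 = \id$.

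First I would record the bookkeeping. As a normal $*$-anti-automorphism, $R_{\bG}$ preserves positivity and extends to a bijection of $\widehat{L^{\infty}(\bG)}_+$; applying the first diagram of \Cref{le:rinv} to the defining condition $\iota_r(x) = x\otimes 1$ from \Cref{eq:homog} shows that $R_{\bG}$ carries $L^{\infty}(\bG/\bH)$ onto $L^{\infty}(\bH\backslash\bG)$ and back, hence restricts to a bijection $\widehat{L^{\infty}(\bG/\bH)}_+ \to \widehat{L^{\infty}(\bH\backslash\bG)}_+$, so that the two sides of the asserted identities live in matching spaces. I would also note the elementary slice identities
\begin{equation*}
  R_{\bG}\circ(\id\otimes\varphi_{\bH}) = (\id\otimes\varphi_{\bH})\circ(R_{\bG}\otimes\id),
  \qquad
  (\id\otimes\psi_{\bH})\circ\mathrm{flip} = \psi_{\bH}\otimes\id,
\end{equation*}
valid on the relevant (extended) positive cones because every map in sight is normal and positive.

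The computation then runs, for $x\in L^{\infty}(\bG)_+$, as
\begin{equation*}
  R_{\bG}\,\cT_{l}(x)
  = (\id\otimes\varphi_{\bH})(R_{\bG}\otimes\id)\pi_r(x)
  = (\id\otimes\psi_{\bH})(R_{\bG}\otimes R_{\bH})\pi_r(x)
  = (\id\otimes\psi_{\bH})\,\mathrm{flip}\,\pi_l(R_{\bG}(x))
  = \cT_{r}(R_{\bG}(x)),
\end{equation*}
where the second equality inserts $\id\otimes\id = (\id\otimes R_{\bH})\circ(\id\otimes R_{\bH})$ and uses $\varphi_{\bH}\circ R_{\bH} = \psi_{\bH}$, the third is \Cref{le:rinv}, and the last combines the flip identity above with the definition of $\cT_{r}$. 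This gives $R_{\bG}\circ\cT_{l} = \cT_{r}\circ R_{\bG}$; conjugating by the involution $R_{\bG}$ (or rerunning the argument from the second diagram of \Cref{le:rinv} with the roles of $\varphi_{\bH}$ and $\psi_{\bH}$ swapped) yields $R_{\bG}\circ\cT_{r} = \cT_{l}\circ R_{\bG}$.

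The only genuine obstacle is the functional-analytic hygiene: one must verify that each displayed step is legitimate on the extended positive part rather than merely on bounded, $\varphi_{\bH}$- or $\psi_{\bH}$-integrable elements — i.e. that $R_{\bG}$, the partial weights $\id\otimes\varphi_{\bH}$ and $\psi_{\bH}\otimes\id$, and the flip all extend normally to the relevant $\widehat{(\cdot)}_+$ and commute as stated. This is routine since an operator-valued weight is the pointwise supremum of its bounded truncations and all the maps involved are normal, but it is where the care lies; the underlying algebraic identity is immediate from \Cref{le:rinv} and $\psi_{\bH} = \varphi_{\bH}\circ R_{\bH}$.
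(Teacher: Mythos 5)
Your argument is correct and is essentially the paper's own proof: both rest on applying a Haar weight to the $\bH$-leg of the intertwining diagrams of \Cref{le:rinv} and invoking $\psi_{\bH}=\varphi_{\bH}\circ R_{\bH}$, with the same preliminary observation that $R_{\bG}$ swaps the two homogeneous spaces. Your version merely spells out the slice/flip bookkeeping and the extended-positive-part hygiene that the paper leaves implicit.
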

\begin{proof}
  That $R_{\bG}$ interchanges $\widehat{L^{\infty}(\bG/\bH)}_+$ and $\widehat{L^{\infty}(\bH\backslash \bG)}_+$ follows from \Cref{le:rinv} (applied to the embedding morphism $\iota:\bH\le \bG$) and the definition \Cref{eq:homog} of the two quantum homogeneous spaces (see also \cite[Proposition 3.3]{ks-proj}).
  
  As for the substance of the statement, it too is an immediate consequence of \Cref{le:rinv}: to obtain $R_{\bG}\circ \cT_{l} = \cT_{r}\circ R_{\bG}$, for instance, apply $\psi_{\bH}$ to the right-hand leg of the top diagram in \Cref{le:rinv} and use the fact that (by definition!) $\varphi_{\bH}$ is nothing but $\psi_{\bH}\circ R_{\bH}$. The other equation follows similarly from the second diagram.
\end{proof}

For a closed {\it normal} quantum subgroup $\bH\le \bG$ the two homogeneous spaces coincide (and this in fact characterizes normality; \cite[\S 4]{ks-proj}, \cite[Theorem 2.11]{vv}):
\begin{equation*}
  \bH\text{ normal}\Longleftrightarrow L^{\infty}(\bG/\bH) = L^{\infty}(\bH\backslash \bG).
\end{equation*}
In that case $\bG/\bH$ is an LCQG in its own right and $R_{\bG}$ restricts to $R_{\bG/\bH}$. \Cref{le:intert-gen} thus implies

\begin{lemma}\label{le:intert-norm}
  For a closed normal quantum subgroup $\bH\trianglelefteq \bG$ of a closed quantum subgroup we have
  \begin{equation*}
    R_{\bG/\bH}\circ \cT_{l} = \cT_{r}\circ R_{\bG}\quad\text{and}\quad R_{\bG/\bH}\circ \cT_{r} = \cT_{l}\circ R_{\bG}.
  \end{equation*}
\end{lemma}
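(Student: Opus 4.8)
The plan is to deduce \Cref{le:intert-norm} from \Cref{le:intert-gen} with essentially no extra work, by tracking how the unitary antipode $R_{\bG}$ of the ambient quantum group behaves under restriction to the homogeneous space $L^{\infty}(\bG/\bH)=L^{\infty}(\bH\backslash\bG)$. First I would invoke normality of $\bH$: as recorded in the excerpt (\cite[\S 4]{ks-proj}, \cite[Theorem 2.11]{vv}), the two homogeneous spaces coincide, $\bG/\bH$ is itself an LCQG, and its von Neumann algebra $L^{\infty}(\bG/\bH)$ sits inside $L^{\infty}(\bG)$ as a comultiplication-compatible subalgebra. The key structural input is that $R_{\bG}$ \emph{restricts} to $R_{\bG/\bH}$ on this subalgebra; this is the standard compatibility of the unitary antipode with cocentral/normal quotients, and it is exactly what makes the passage from \Cref{le:intert-gen} to \Cref{le:intert-norm} possible.

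Granting that, the argument is a one-line substitution. By \Cref{le:intert-gen} we already know $R_{\bG}\circ\cT_l=\cT_r\circ R_{\bG}$ as maps $L^{\infty}(\bG)\to\widehat{L^{\infty}(\bG)}_+$, where the target on the left-hand side is reached through $\widehat{L^{\infty}(\bH\backslash\bG)}_+$ and on the right through $\widehat{L^{\infty}(\bG/\bH)}_+$. Since under normality these two extended positive cones are the same subcone of $\widehat{L^{\infty}(\bG)}_+$, and since $R_{\bG}$ acts on elements of that subcone exactly as $R_{\bG/\bH}$ does (by the restriction property just discussed), the outer $R_{\bG}$ on the left of the first identity of \Cref{le:intert-gen} may be replaced by $R_{\bG/\bH}$, yielding $R_{\bG/\bH}\circ\cT_l=\cT_r\circ R_{\bG}$. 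The second identity $R_{\bG/\bH}\circ\cT_r=\cT_l\circ R_{\bG}$ follows the same way from the second identity of \Cref{le:intert-gen}. One should check that $\cT_l$ and $\cT_r$ genuinely land in the extended positive part of $L^{\infty}(\bG/\bH)=L^{\infty}(\bH\backslash\bG)$ — this is the content of the $\subseteq$ arrows in diagrams \Cref{eq:tr,eq:tl} together with the standard fact that the partial Haar-weight slices $(\mathrm{id}\otimes\varphi_{\bH})\pi_r$ and $(\psi_{\bH}\otimes\mathrm{id})\pi_l$ take values in the relevant relative commutant; under normality these two target algebras coincide, which is precisely what lets the compositions on the two sides of each equation be compared.

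The only point that deserves genuine care — and the step I expect to be the main (if minor) obstacle — is justifying cleanly that $R_{\bG}$ restricts to $R_{\bG/\bH}$ on $L^{\infty}(\bG/\bH)$, and that this restriction is compatible with the identification of extended positive parts used implicitly above. Concretely, one needs that the inclusion $L^{\infty}(\bG/\bH)\subseteq L^{\infty}(\bG)$ intertwines $R_{\bG/\bH}$ and $R_{\bG}$; this can be extracted from the construction of the quotient LCQG in \cite[Theorem 2.11]{vv} (the quotient's Haar weights, modular data, and unitary antipode are all obtained by restriction from $\bG$), or cited from \cite[\S 4]{ks-proj}. Once that compatibility is in hand, \Cref{le:intert-norm} is immediate, since it is literally \Cref{le:intert-gen} read through the identification $\widehat{L^{\infty}(\bG/\bH)}_+=\widehat{L^{\infty}(\bH\backslash\bG)}_+$ with the outer antipode relabelled. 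I would therefore write the proof as: ``By normality the two homogeneous spaces agree and $R_{\bG}|_{L^{\infty}(\bG/\bH)}=R_{\bG/\bH}$; substituting this into \Cref{le:intert-gen} gives the claim.''
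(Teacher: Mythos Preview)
Your proposal is correct and matches the paper's own argument essentially verbatim: the paper states that under normality the two homogeneous spaces coincide and $R_{\bG}$ restricts to $R_{\bG/\bH}$, then records \Cref{le:intert-norm} as an immediate consequence of \Cref{le:intert-gen}. Your final one-line summary is exactly how the paper phrases it.
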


Recall \cite[Proposition]{chk} also that for closed normal quantum subgroups we have a Weyl-type ``disintegration formula''
\begin{equation}\label{eq:weyl}
  \varphi_{\bG} = \varphi_{\bG/\bH}\circ \cT_{l}.
\end{equation}
Naturally, since left Haar weights are only determined up to positive scaling, the content of this claim is that the right-hand side of \Cref{eq:weyl} is left-invariant. Having fixed a left Haar weight $\varphi$ though, we are making the convention that the corresponding right Haar weight $\psi$ is determined by it: $\psi=\varphi\circ R$. The following observation says that this switch from left to right Haar weights is compatible with the operator-valued weights $\cT$.

\begin{lemma}\label{le:lrhaar}
  For a closed, normal quantum subgroup $\bH\trianglelefteq \bG$ of a locally compact quantum group we have
  \begin{equation}\label{eq:simultscale}
    \varphi_{\bG} = \varphi_{\bG/\bH}\circ \cT_{l} \Longleftrightarrow \psi_{\bG} = \psi_{\bG/\bH}\circ \cT_{r}.
  \end{equation}
\end{lemma}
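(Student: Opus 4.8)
The plan is to exploit the antipode-intertwining relations of Lemma~\ref{le:intert-norm} together with the defining conventions $\psi_\bG = \varphi_\bG\circ R_\bG$ and $\psi_{\bG/\bH} = \varphi_{\bG/\bH}\circ R_{\bG/\bH}$. First I would observe that since $R_\bG$ is an involutive, conjugate-linear $*$-anti-automorphism of $L^\infty(\bG)$, and since $\cT_l,\cT_r$ are the two operator-valued weights of \eqref{eq:tr}--\eqref{eq:tl}, the identity $R_{\bG/\bH}\circ \cT_l = \cT_r\circ R_\bG$ from Lemma~\ref{le:intert-norm} lets me transport one equivalence to the other by ``conjugating with $R$''.

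Concretely, suppose $\varphi_\bG = \varphi_{\bG/\bH}\circ \cT_l$. Then
\begin{align*}
  \psi_\bG = \varphi_\bG\circ R_\bG
           = \varphi_{\bG/\bH}\circ \cT_l\circ R_\bG
           = \varphi_{\bG/\bH}\circ R_{\bG/\bH}\circ \cT_r
           = \psi_{\bG/\bH}\circ \cT_r,
\end{align*}
where the third equality is the second half of Lemma~\ref{le:intert-norm} (namely $\cT_l\circ R_\bG = R_{\bG/\bH}\circ \cT_r$) and the last is the defining convention for $\psi_{\bG/\bH}$. The reverse implication is entirely symmetric: starting from $\psi_\bG = \psi_{\bG/\bH}\circ \cT_r$, compose with $R_\bG$ on the right (using $R_\bG^2 = \id$ so that $\varphi_\bG = \psi_\bG\circ R_\bG$), and apply the relation $\cT_r\circ R_\bG = R_{\bG/\bH}\circ \cT_l$ from Lemma~\ref{le:intert-norm} together with $\varphi_{\bG/\bH} = \psi_{\bG/\bH}\circ R_{\bG/\bH}$.

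The one point that needs a little care — and which I expect to be the main (mild) obstacle — is the legitimacy of composing a weight with an anti-automorphism and with an operator-valued weight at the level of the extended positive parts $\widehat{L^\infty(\bG)}_+$ and $\widehat{L^\infty(\bG/\bH)}_+$, i.e. checking that all the compositions above are genuinely equalities of n.s.f.\ weights on the relevant algebras rather than merely formal manipulations. This is handled by the standard functoriality of the extended positive part under normal $*$-(anti)homomorphisms (as in \cite[\S IX.4]{tak2} / \cite{haag1}): $R_\bG$ induces a bijection $\widehat{L^\infty(\bG)}_+ \to \widehat{L^\infty(\bG)}_+$ restricting to one between $\widehat{L^\infty(\bG/\bH)}_+$ and itself (using that $R_\bG$ preserves $L^\infty(\bG/\bH)=L^\infty(\bH\backslash\bG)$ in the normal case, already noted in the proof of Lemma~\ref{le:intert-gen}), and $\cT_l,\cT_r$ are normal operator-valued weights, so all composites are well-defined n.s.f.\ weights and the displayed chains of equalities are valid verbatim.
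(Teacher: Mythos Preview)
Your proposal is correct and follows essentially the same route as the paper: compose with $R_{\bG}$, invoke the intertwining relation $\cT_l\circ R_{\bG}=R_{\bG/\bH}\circ\cT_r$ from \Cref{le:intert-norm}, and use the convention $\psi=\varphi\circ R$ on both $\bG$ and $\bG/\bH$. One small slip: the unitary antipode $R_{\bG}$ is a (linear) $*$-anti-automorphism, not conjugate-linear; this does not affect your argument, which only needs that $R_{\bG}$ is involutive and positivity-preserving.
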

\begin{proof}
  This follows from the various intertwining properties of the unitary antipode(s), already noted above: suppose we have scaled the left Haar weights so that the left hand equation holds. We then have
  \begin{align*}
    \psi_{\bG} &= \varphi_{\bG}\circ R_{\bG} \quad \text{by convention}\\
               &=\varphi_{\bG/\bH}\circ \cT_{l}\circ R_{\bG}\quad \text{by assumption}\\
               &=\varphi_{\bG/\bH}\circ R_{\bG/\bH}\circ \cT_{r}\quad \text{\Cref{le:intert-norm}}\\
               &=\psi_{\bG/\bH}\circ \cT_{r}\quad \text{again by convention}.
  \end{align*}
  This concludes the proof. 
\end{proof}

\pf{th:resmod}
\begin{th:resmod}
  Under the centrality assumption $\bH$ will in particular be {\it abelian} (in the sense that $L^{\infty}(\bH)$ is cocommutative) and hence unimodular, so its left and right Haar weights coincide: $\varphi_{\bH}=\psi_{\bH}$. $\bH$ is furthermore normal so that
  \begin{equation*}
    L^{\infty}(\bH\backslash \bG) = L^{\infty}(\bG/\bH),
  \end{equation*}
  and the two operator-valued weights $\cT_{l}$ and $\cT_{r}$ introduced in \Cref{eq:tr} and \Cref{eq:tl} coincide:
  \begin{equation*}
    \cT:=\cT_{l}=\cT_{r}. 
  \end{equation*}
  According to \Cref{le:lrhaar} we can scale the various Haar weights so that 
  \begin{equation}\label{eq:phipsires}
    \varphi_{\bG/\bH}\circ \cT = \varphi_{\bG} \quad\text{and}\quad \psi_{\bG/\bH}\circ \cT = \psi_{\bG}.
  \end{equation}

  We have
  \begin{align*}
    \nu_{\bG}^{\frac 12 it^2}\delta_{\bG}^{it}&= (D\psi_G:D\varphi_{\bG})_t\quad\text{\cite[Proposition 4.4]{vaes-rn} and \cite[Proposition 7.12 (6)]{kvcast}}\\
                                                  &=(D\psi_{\bG/\bH}\circ\cT:D\varphi_{\bG/\bH}\circ\cT)_t\quad\text{by \Cref{eq:phipsires}}\\
                                                  &=(D\psi_{\bG/\bH}:D\varphi_{\bG/\bH})_t\quad\text{\cite[Theorem 4.7]{haag1}}\\
                                              &=\nu_{\bG/\bH}^{\frac 12 it^2}\delta_{\bG/\bH}^{it}\quad\text{analogous to the first equality}.\\
  \end{align*}
  This is already sufficient to draw the desired conclusion 
  \begin{equation*}
    \delta_{\bG}^{it} = \delta_{\bG/\bH}^{it},
  \end{equation*}
  since given a positive real $\lambda$ and a positive (possibly unbounded) operator $\delta$, the latter can be recovered from $u_t:=\lambda^{it^2}\delta^{it}$: the logarithm $\log \delta$ (obtained by applying $\log$ to the positive operator $\delta$ as usual, via functional calculus \cite[Theorem 13.24]{rud-fa}) can be obtained \cite[\S A.3]{tak2} as
  \begin{equation*}
    i\log\delta\xi  = \lim_{t\to 0}\frac{\delta^{it}-1}t \xi = \lim_{t\to 0}\frac{u_t-1}t \xi
  \end{equation*}
  for $\xi$ ranging over a dense subspace of the ambient Hilbert space.
\end{th:resmod}


We also record the following remark, obtained in passing in the course of the above proof.

\begin{corollary}
  If $\bH\le \bG$ is a central, closed, normal quantum subgroup of a locally compact quantum group the scaling constants of $\bG$ and $\bG/\bH$ coincide.
\end{corollary}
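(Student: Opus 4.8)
The plan is to extract this directly from the computation already carried out in the proof of \Cref{th:resmod}; no new ingredient is needed. Recall that, after normalizing the Haar weights so that \Cref{eq:phipsires} holds (which is legitimate by \Cref{le:lrhaar}), that proof produced the chain
\begin{equation*}
  \nu_{\bG}^{\frac12 it^2}\,\delta_{\bG}^{it}
  = (D\psi_{\bG}:D\varphi_{\bG})_t
  = (D\psi_{\bG/\bH}:D\varphi_{\bG/\bH})_t
  = \nu_{\bG/\bH}^{\frac12 it^2}\,\delta_{\bG/\bH}^{it},
  \qquad t\in\bR,
\end{equation*}
and subsequently established the equality $\delta_{\bG}^{it}=\delta_{\bG/\bH}^{it}$ of the two modular elements.

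First I would note that $\delta_{\bG}^{it}$ is a unitary operator (functional calculus of the strictly positive operator $\delta_{\bG}$ against the unitary character $s\mapsto s^{it}$), hence invertible. Multiplying the outer equality of the displayed chain on the right by $\delta_{\bG}^{-it}=\delta_{\bG/\bH}^{-it}$ then leaves the purely scalar identity
\begin{equation*}
  \nu_{\bG}^{\frac12 it^2} = \nu_{\bG/\bH}^{\frac12 it^2},\qquad t\in\bR.
\end{equation*}
Writing $\nu_{\bG}=e^{a}$ and $\nu_{\bG/\bH}=e^{b}$ with $a,b\in\bR$ and substituting $s=t^{2}/2$, this reads $e^{ias}=e^{ibs}$ for all $s\ge 0$; differentiating at $s=0$ forces $a=b$, i.e.\ $\nu_{\bG}=\nu_{\bG/\bH}$.

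I do not expect a genuine obstacle: the corollary is essentially a byproduct of the cancellation $\delta_{\bG}^{it}=\delta_{\bG/\bH}^{it}$ inside the Connes-cocycle identity. The only point worth flagging is that the argument relies on the simultaneous normalization of the left and right Haar weights of $\bG$ and $\bG/\bH$ from \Cref{le:lrhaar} already used in \Cref{th:resmod}; but the scaling constant $\nu$ is by construction unaffected by rescaling a Haar weight by a positive real, so this normalization is harmless and the resulting equality $\nu_{\bG}=\nu_{\bG/\bH}$ is independent of it.
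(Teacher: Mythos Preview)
Your proposal is correct and follows essentially the same route as the paper: both extract from the proof of \Cref{th:resmod} the identity $\nu_{\bG}^{\frac12 it^2}\delta_{\bG}^{it}=\nu_{\bG/\bH}^{\frac12 it^2}\delta_{\bG/\bH}^{it}$, cancel the already-established equality of the $\delta$-factors, and deduce $\nu_{\bG}=\nu_{\bG/\bH}$ from the remaining scalar identity. Your additional remarks (invertibility of $\delta^{it}$, the logarithm argument, scaling-invariance of $\nu$) are correct elaborations but not new ideas.
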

\begin{proof}
  The proof of \Cref{th:resmod} actually shows that
  \begin{equation*}
    \nu_{\bG}^{\frac 12 it^2}\delta_{\bG}^{it} = \nu_{\bG/\bH}^{\frac 12 it^2}\delta_{\bG/\bH}^{it},\ \forall t\in \bR
  \end{equation*}
  and then concludes that the $\delta$ factors coincide: $\delta_{\bG}^{it}=\delta_{\bG/\bH}^{it}$. The $\nu$ factors must thus also coincide:
  \begin{equation*}
    \nu_{\bG}^{\frac 12 it^2} = \nu_{\bG/\bH}^{\frac 12 it^2},\ \forall t\in \bR,
  \end{equation*}
  which of course implies $\nu_{\bG}=\nu_{\bG/\bH}$.
\end{proof}

\begin{remark}\label{re:lie}  
  By way of bolstering the intuitive plausibility of \Cref{th:resmod}, it might be instructive to consider the classical setup whereby $\bG$ is a connected Lie group. In that case we know \cite[Proposition 2.30]{folland} that
  \begin{equation*}
    \delta_{\bG}(x) = \Delta_{\bG}(x)^{-1} = \det Ad(x),
  \end{equation*}
  where $Ad:\bG\to GL(Lie(\bG))$ is the adjoint action. Choose a decomposition
  \begin{equation*}
    Lie(\bG) = Lie(\bH)\oplus V
  \end{equation*}
  and a compatible basis that will give matrix expressions for adjoint-action operators. The centrality of $\bH$ then ensures that
  \begin{equation}\label{eq:block}
    Ad(x) =
    \begin{pmatrix}
      I & *\\
      0 & Ad(\overline{x})
    \end{pmatrix},
  \end{equation}
  where
  \begin{equation*}
    \bG\ni x\mapsto \overline{x}\in \bG/\bH. 
  \end{equation*}
  Plainly, the determinant of \Cref{eq:block} equals that of its lower right-hand block, hence \Cref{th:resmod} in this case.
\end{remark}

\Cref{re:lie} also suggests what is needed in order to extend \Cref{th:resmod} to {\it normal} (non-central) closed quantum subgroups. In that case, \Cref{eq:block} takes the form
\begin{equation}\label{eq:blockgen}
  Ad(x) =
  \begin{pmatrix}
    Ad(x|_{\bH}) & *\\
    0 & Ad(\overline{x})
  \end{pmatrix},
\end{equation}
where $Ad(x|_{\bH})$ denotes the adjoint action by $x$ on $Lie(\bH)$. Taking determinants we thus have
\begin{equation}\label{eq:normnotcent-cls}
  \delta_{\bG}(x) = \det Ad(x) = \det Ad(x|_{\bH}) \cdot \det Ad(\overline{x}) = \det Ad(x|_{\bH}) \cdot \delta_{\bG/\bH}(\overline{x}). 
\end{equation}
The ``correction factor'' away from \Cref{th:resmod} is thus $\det Ad(x|_{\bH}^{-1})$. Its quantum counterpart, for normal $\bH\trianglelefteq \bG$, will be a measure of how far apart the two operator-valued weights
\begin{equation*}
  \cT_{l},\ \cT_{r}:L^{\infty}(\bG)\to \widehat{L^{\infty}(\bG/\bH)}_+
\end{equation*}
are from each other: it was their coincidence that captured the triviality of the upper left-hand block in \Cref{eq:block}. Measuring this discrepancy between $\cT_{r}$ and $\cT_{l}$ is precisely what the Radon-Nikodym derivative $(D\cT_{r}:D\cT_{l})_t$ of \cite[Definition 6.2]{haag2} is designed to do, so that construction features below.

As \cite[Proposition 5.5]{vaes-rn} makes clear, such Radon-Nikodym derivatives ought to be intimately related to how one of the operator-valued weights $\cT_{l,r}$ evolves under the modular group of the other. The following result examines this.

\begin{lemma}\label{le:tsigma}
  For a closed locally compact quantum group $\bH\le \bG$ we have
  \begin{equation}\label{eq:tls}
    \cT_l\sigma'_{\bG,t} = \nu_{\bH}^t\sigma'_{\bG,t}\cT_l
  \end{equation}
  and similarly,
  \begin{equation}\label{eq:trs}
    \cT_r\sigma_{\bG,t} = \nu_{\bH}^{-t}\sigma_{\bG,t}\cT_r
  \end{equation}
\end{lemma}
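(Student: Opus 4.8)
The statement to prove is \Cref{le:tsigma}, the intertwining relations
\[
  \cT_l\sigma'_{\bG,t} = \nu_{\bH}^t\,\sigma'_{\bG,t}\cT_l,
  \qquad
  \cT_r\sigma_{\bG,t} = \nu_{\bH}^{-t}\,\sigma_{\bG,t}\cT_r.
\]
By the unitary-antipode symmetry already established in \Cref{le:intert-norm} (or rather its un-normalized companion \Cref{le:intert-gen}, since normality is not assumed here), together with the facts that $R_\bG\sigma_{\bG,t}R_\bG=\sigma'_{\bG,-t}$, $R_\bG\sigma'_{\bG,t}R_\bG=\sigma_{\bG,-t}$, and $\nu_\bH$ is invariant under passing to the opposite, the second identity follows from the first by conjugating with $R_\bG$ and flipping $t\mapsto -t$. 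So I would focus entirely on \Cref{eq:tls}.

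First I would unwind the definition \Cref{eq:tr}: $\cT_l=(\id\otimes\varphi_\bH)\circ\iota_r$. The two ingredients of $\sigma'_{\bG,t}$-equivariance are then (i) how $\iota_r$ intertwines the modular group $\sigma'_\bG$, and (ii) how the slice $\id\otimes\varphi_\bH$ behaves under the one-parameter group appearing on the right leg. For (i), \Cref{eq:pirs} of \Cref{le:act1par} gives exactly
\[
  \iota_r\,\sigma'_{\bG,t} = (\sigma'_{\bG,t}\otimes\tau_{\bH,-t})\,\iota_r,
\]
so applying $\id\otimes\varphi_\bH$ to both sides reduces the problem to understanding $\varphi_\bH\circ\tau_{\bH,-t}$. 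For (ii), the scaling group rescales the left Haar weight by the scaling constant: the standard relation $\varphi_\bH\circ\tau_{\bH,t}=\nu_\bH^{-t}\varphi_\bH$ (this is \cite[Proposition 7.12]{kvcast}, or follows from \cite[Proposition 6.8]{kvcast}) gives $\varphi_\bH\circ\tau_{\bH,-t}=\nu_\bH^{t}\varphi_\bH$. Substituting, $(\id\otimes\varphi_\bH)(\sigma'_{\bG,t}\otimes\tau_{\bH,-t})=\nu_\bH^{t}\,\sigma'_{\bG,t}\circ(\id\otimes\varphi_\bH)$, which is precisely $\nu_\bH^t\sigma'_{\bG,t}\cT_l$. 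That is the whole argument in outline.

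The step I expect to require the most care is not the algebraic manipulation but the legitimacy of slicing an operator-valued weight and commuting it past $\sigma'_{\bG,t}\otimes\tau_{\bH,-t}$ on the extended positive cone $\widehat{L^\infty(\bG)}_+$ — one must check the identity on $\fm^+$ of the relevant weights and then extend by normality and the usual lower-semicontinuity/monotone-limit arguments for operator-valued weights (as in \cite{haag1,haag2}), and one must confirm that $\sigma'_{\bG,t}$ genuinely restricts to an automorphism of $L^\infty(\bG/\bH)$ (equivalently $L^\infty(\bH\backslash\bG)$ in the general, not-necessarily-normal case) so that both sides of \Cref{eq:tls} land in the same space — but this is exactly what \Cref{cor:homoginv} supplies. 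A minor additional point: since $\bH$ need not be unimodular here, one should be careful to use the $\tau_{\bH}$–$\varphi_\bH$ relation and not accidentally invoke a $\psi_\bH$ version; the asymmetric placement of $\sigma'_\bG$ versus $\sigma_\bG$ in \Cref{eq:tls} versus \Cref{eq:trs} is a direct shadow of which Haar weight ($\varphi_\bH$ or $\psi_\bH$) defines $\cT_l$ versus $\cT_r$.
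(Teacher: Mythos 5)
Your proposal is correct and matches the paper's own argument for \Cref{eq:tls} essentially verbatim: unwind $\cT_l=(\id\otimes\varphi_{\bH})\iota_r$, apply \Cref{eq:pirs}, and use $\varphi_{\bH}\circ\tau_{\bH,-t}=\nu_{\bH}^{t}\varphi_{\bH}$, with \Cref{cor:homoginv} guaranteeing that the right-hand side makes sense. The only (immaterial) divergence is that the paper obtains \Cref{eq:trs} by running the parallel computation with $\pi_l$, $\psi_{\bH}$ and \Cref{eq:pils} rather than by conjugating with $R_{\bG}$ as you do; both are fine.
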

\begin{proof}
  Note first that the right-hand sides actually make sense: by \Cref{cor:homoginv} the modular group $\sigma'_{\bG,t}$ leaves the codomain
  \begin{equation*}
    \widehat{L^{\infty}(\bG/\bH)}_+ \subseteq \widehat{L^{\infty}(\bG)}_+
  \end{equation*}
  of $\cT_l$ invariant, and similarly for $\cT_r$. The two arguments being entirely parallel, we only run through the first. Denoting by $\pi:\bH\to \bG$ the embedding:
  \begin{align*}
    \cT_l\sigma'_{\bG,t} &= (\id\otimes\varphi_{\bH})\pi_r\sigma'_{\bG,t}\quad\text{by definition}\\
                         &= (\id\otimes\varphi_{\bH})(\sigma'_{\bG,t}\otimes \tau_{\bH,-t})\pi_r\quad \text{\Cref{eq:pirs}}\\
                         &=\nu_{\bH}^t (\sigma'_{\bG,t}\otimes \varphi_{\bH})\pi_r\quad \text{\cite[Proposition 6.8 (3)]{kvcast}}\\
                         &=\nu_{\bH}^t\sigma'_{\bG,t}\cT_l\quad \text{by the definition of $\cT_l$ again}.
  \end{align*}
  This concludes the proof of \Cref{eq:tls}. 
\end{proof}

Note, in passing, that for normal quantum subgroups Weyl disintegration transports over to scaling constants.

\begin{proposition}\label{pr:scalemult}
  For a closed, normal quantum subgroup $\bH\trianglelefteq \bG$ of a locally compact quantum group we have
  \begin{equation*}
    \nu_{\bG} = \nu_{\bH}\nu_{\bG/\bH}. 
  \end{equation*}
\end{proposition}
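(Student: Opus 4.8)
The plan is to compare the two sides of the Weyl disintegration formula \Cref{eq:weyl} under the right modular group $\sigma'_{\bG,t}$, exploiting the equivariance $\cT_l\sigma'_{\bG,t} = \nu_\bH^t\,\sigma'_{\bG,t}\cT_l$ of \Cref{le:tsigma}. As a preliminary I would record the standard Kustermans--Vaes scaling identity $\varphi_\bG\circ\sigma'_{\bG,t} = \nu_\bG^{t}\,\varphi_\bG$ --- which follows from $\psi_\bG\circ\sigma_{\bG,t} = \nu_\bG^{-t}\psi_\bG$, the convention $\psi_\bG = \varphi_\bG\circ R_\bG$, and $\sigma'_{\bG,t} = R_\bG\,\sigma_{\bG,-t}\,R_\bG$ --- together with its counterpart for $\bG/\bH$; by \Cref{cor:homoginv} and normality the subalgebra $L^\infty(\bG/\bH) = L^\infty(\bH\backslash\bG)$ is $\sigma'_{\bG,t}$-invariant.

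The one extra ingredient needed is that $\sigma'_{\bG,t}$ restricts on $L^\infty(\bG/\bH)$ to $\sigma'_{\bG/\bH,t}$. Since $\bH$ is normal, $\cT_r$ is an operator-valued weight from $L^\infty(\bG)$ onto $\widehat{L^\infty(\bG/\bH)}_+$, so the disintegration $\psi_\bG = \psi_{\bG/\bH}\circ\cT_r$ (valid after the scaling fixed in \Cref{le:lrhaar}) together with Haagerup's theorem that the modular automorphism group of a weight of the form $\psi\circ T$ agrees on the smaller algebra with that of $\psi$ \cite{haag1} shows that $\sigma'_{\bG,t}$ restricted to $L^\infty(\bG/\bH)$ is the modular automorphism group of $\psi_{\bG/\bH}$, namely $\sigma'_{\bG/\bH,t}$. (Alternatively, one may simply invoke that, by the construction of the quotient, the one-parameter groups of $\bG/\bH$ are the restrictions of those of $\bG$.)

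Granting this, the computation is
\begin{align*}
  \nu_\bG^{t}\,\varphi_\bG
  &= \varphi_\bG\circ\sigma'_{\bG,t}
   = \varphi_{\bG/\bH}\circ\cT_l\circ\sigma'_{\bG,t}
   = \nu_\bH^{t}\,\varphi_{\bG/\bH}\circ\sigma'_{\bG,t}\circ\cT_l\\
  &= \nu_\bH^{t}\,\varphi_{\bG/\bH}\circ\sigma'_{\bG/\bH,t}\circ\cT_l
   = \nu_\bH^{t}\,\nu_{\bG/\bH}^{t}\,\varphi_{\bG/\bH}\circ\cT_l
   = \nu_\bH^{t}\,\nu_{\bG/\bH}^{t}\,\varphi_\bG,
\end{align*}
the first and last equalities being \Cref{eq:weyl}, the second \Cref{eq:tls}, the fourth the restriction just discussed, and the fifth the scaling identity for $\bG/\bH$; all the middle steps are read on the extended positive cone, where $\sigma'_{\bG,t}$ preserves $\widehat{L^\infty(\bG/\bH)}_+$ and $\varphi_{\bG/\bH}$ and $\sigma'$ extend normally. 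Since $\varphi_\bG\neq 0$ this forces $\nu_\bG^{t} = (\nu_\bH\,\nu_{\bG/\bH})^{t}$ for all $t\in\bR$, hence $\nu_\bG = \nu_\bH\,\nu_{\bG/\bH}$.

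I expect the main obstacle to be the bookkeeping behind the restriction statement $\sigma'_{\bG,t}|_{L^\infty(\bG/\bH)} = \sigma'_{\bG/\bH,t}$: settling on the exact form of Haagerup's operator-valued-weight result to cite, checking that the modular automorphism group appearing there is indeed attached to the specific right Haar weight $\psi_{\bG/\bH}$ used in the disintegration, and verifying that the displayed chain of equalities is legitimate on the extended positive part rather than just on $\fm^+_{\varphi_\bG}$.
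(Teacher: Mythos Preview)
Your proposal is correct and follows essentially the same route as the paper: the paper's chain runs $\nu_{\bG/\bH}^t\varphi_{\bG} = \varphi_{\bG/\bH}\circ\sigma'_{\bG/\bH,t}\cT_l = \varphi_{\bG/\bH}\circ\sigma'_{\bG,t}\cT_l = \nu_{\bH}^{-t}\varphi_{\bG}\sigma'_{\bG,t} = \nu_{\bH}^{-t}\nu_{\bG}^t\varphi_{\bG}$, which is your computation read in the opposite direction, with the restriction $\sigma'_{\bG,t}|_{L^{\infty}(\bG/\bH)}=\sigma'_{\bG/\bH,t}$ justified (as you anticipated) by \cite[Theorem 4.7]{haag1}. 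Your justification of that restriction via $\psi_{\bG}=\psi_{\bG/\bH}\circ\cT_r$ is exactly the right way to make the Haagerup citation precise.
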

\begin{proof}
  Throughout the proof we assume we have fixed Haar weights on $\bG$ and $\bG/\bH$ so that both conditions in \Cref{eq:simultscale} hold (as that result says we may):
  \begin{equation}\label{eq:phipsi}
    \varphi_{\bG/\bH}\circ \cT_l = \varphi_{\bG}\quad\text{and}\quad \psi_{\bG/\bH}\circ \cT_r = \psi_{\bG}.
  \end{equation}
  By definition (\cite[Proposition 6.8 and Terminology 7.16]{kvcast}), $\nu_{\bG/\bH}$ can be expressed by
  \begin{equation}\label{eq:nugh}
    \varphi_{\bG/\bH}\circ \sigma'_{\bG/\bH,t} = \nu_{\bG/\bH}^t \varphi_{\bG/\bH}.
  \end{equation}
  Now precompose both sides with $\cT_l$:
  \begin{align*}
    \nu_{\bG/\bH}^t\varphi_{\bG} &= \nu_{\bG/\bH}^t\varphi_{\bG/\bH}\cT_l\quad \text{\Cref{eq:phipsi}}\\
                                 &= \varphi_{\bG/\bH}\circ \sigma'_{\bG/\bH,t}\cT_l\quad\text{\Cref{eq:nugh}}\\
                                 &=\varphi_{\bG/\bH}\circ \sigma'_{\bG,t}\cT_l\quad\text{\cite[Theorem 4.7]{haag1}}\\
                                 &=\nu_{\bH}^{-t}\varphi_{\bG/\bH}\circ \cT_l\sigma'_{\bG,t}\quad\text{\Cref{eq:tls}}\\
                                 &=\nu_{\bH}^{-t}\varphi_{\bG}\sigma'_{\bG,t}\quad\text{\Cref{eq:phipsi} again}\\
                                 &=\nu_{\bH}^{-t}\nu_{\bG}^t\varphi_{\bG}\quad\text{\cite[Proposition 6.8 (3)]{kvcast}}. 
  \end{align*}
  This gives the desired result $\nu_{\bG/\bH} = \nu_{\bH}^{-1}\nu_{\bG}$.
\end{proof}

In light of \cite[Proposition 5.5]{vaes-rn}, \Cref{le:tsigma} is strongly suggestive of \Cref{th:relmodel} below. In the statement, we refer to the {\it modular group} $\sigma^{\cT}_{\bG,t}$ of an operator-valued weight $\cT$ on $L^{\infty}(\bG)$; recall that for an operator-valued weight $\cT:M\to \widehat{N}_+$ that modular group is
\begin{equation*}
  \sigma^{\cT}_t := \sigma^{\theta\circ\cT}_t|_{N^c},\ \theta\text{ any n.s.f. weight on }N:
\end{equation*}
this is \cite[Definition 6.2 (1)]{haag2}, relying on the fact that by \cite[Proposition 6.1 (1)]{haag2} the definition does not depend on $\theta$.

\begin{theorem}\label{th:relmodel}
  Let $\bH\trianglelefteq \bG$ be a closed, normal locally compact quantum subgroup. There is a strictly positive element $\delta=\delta_{\bG\triangleright \bH}$ affiliated with the relative commutant
  \begin{equation*}
    L^{\infty}(\bG/\bH)^c = L^{\infty}(\bG)\cap L^{\infty}(\bG/\bH)'
  \end{equation*}
  such that
  \begin{equation}\label{eq:dtdt}
    (D\cT_r:D\cT_l)_t = \nu_{\bH}^{\frac 12 it^2}\delta_{\bG\triangleright H}^{it}
  \end{equation}
  and
  \begin{equation}\label{eq:sigmatdel}
    \sigma^{\cT_l}_{\bG,s}(\delta^{it}) = \nu_{\bH}^{ist}\delta^{it},\ \forall s,t\in \bR. 
  \end{equation}
\end{theorem}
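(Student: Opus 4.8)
The plan is to obtain both displayed identities from the theory of Radon--Nikodym cocycles for operator-valued weights of Haagerup \cite{haag2}, bootstrapping from the single structural fact in \Cref{le:tsigma}. First I would recall that since $\cT_l$ and $\cT_r$ are both n.s.f.\ operator-valued weights $L^\infty(\bG)\to\widehat{L^\infty(\bG/\bH)}_+$, the Connes-type cocycle $(D\cT_r:D\cT_l)_t$ is well defined \cite[Definition 6.2]{haag2}, takes values in the unitary group of the relative commutant $L^\infty(\bG/\bH)^c$, and satisfies the chain rule together with the intertwining property
\begin{equation*}
  \sigma^{\cT_l}_{\bG,t}(x)=(D\cT_r:D\cT_l)_t\,\sigma^{\cT_r}_{\bG,t}(x)\,(D\cT_r:D\cT_l)_t^*,\quad x\in L^\infty(\bG/\bH)^c,
\end{equation*}
as in \cite[Proposition 5.5]{vaes-rn}. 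The strategy is to identify the cocycle explicitly and only afterwards extract $\delta$.

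Next I would compute $(D\cT_r:D\cT_l)_t$ by composing with Haar weights. Fix Haar weights so that $\varphi_{\bG/\bH}\circ\cT_l=\varphi_{\bG}$ and $\psi_{\bG/\bH}\circ\cT_r=\psi_{\bG}$ as in \Cref{le:lrhaar}. Then, by the functoriality of Connes cocycles along operator-valued weights \cite[Theorem 4.7]{haag1} together with the chain rule, one has
\begin{equation*}
  (D\cT_r:D\cT_l)_t=(D(\psi_{\bG/\bH}\circ\cT_r):D(\varphi_{\bG/\bH}\circ\cT_l))_t\cdot(\text{correction from }(D\varphi_{\bG/\bH}:D\psi_{\bG/\bH})_t),
\end{equation*}
which after sorting out the two inner cocycles becomes the $\bG$-level object $(D\psi_{\bG}:D\varphi_{\bG})_t=\nu_{\bG}^{\frac12 it^2}\delta_{\bG}^{it}$ divided by the $\bG/\bH$-level object $\nu_{\bG/\bH}^{\frac12 it^2}\delta_{\bG/\bH}^{it}$; here I would use \Cref{pr:scalemult} ($\nu_{\bG}=\nu_{\bH}\nu_{\bG/\bH}$) to collapse the scaling constants to $\nu_{\bH}^{\frac12 it^2}$. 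The cleaner route, and the one I would actually write, is to take \Cref{le:tsigma} as input: relation \Cref{eq:tls}, $\cT_l\sigma'_{\bG,t}=\nu_{\bH}^t\sigma'_{\bG,t}\cT_l$, says precisely that $\cT_l$ intertwines $\sigma'_{\bG}$ with a $\nu_{\bH}$-rescaled version of itself, and \cite[Proposition 5.5]{vaes-rn} turns such a twisted-invariance statement into the assertion that $(D\cT_r:D\cT_l)_t$ is of the form $\lambda^{it^2}u_t$ with $u_t$ a strictly-positive-operator one-parameter unitary group — that is, $u_t=\delta^{it}$ for a strictly positive $\delta\in' L^\infty(\bG/\bH)^c$ — and pins $\lambda=\nu_{\bH}^{1/2}$ from the exact scalar in \Cref{eq:tls}. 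This yields \Cref{eq:dtdt}. Affiliation with the relative commutant is automatic since the cocycle lands there.

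Finally, for \Cref{eq:sigmatdel}, I would feed $\delta^{it}\in L^\infty(\bG/\bH)^c$ into the intertwining identity above: $\sigma^{\cT_l}_{\bG,s}(\delta^{it})=(D\cT_r:D\cT_l)_s\,\sigma^{\cT_r}_{\bG,s}(\delta^{it})\,(D\cT_r:D\cT_l)_s^*$. Using that $\delta$ commutes with $L^\infty(\bG/\bH)$ and that everything in sight is built from a single strictly positive operator (so the $\sigma^{\cT_r}_{\bG,s}$ and $\sigma^{\cT_l}_{\bG,s}$ actions on powers of $\delta$ are mutually compatible), the cocycle identity for $(D\cT_r:D\cT_l)$ combined with $\delta^{is}\delta^{it}=\delta^{i(s+t)}$ and the scalar $\nu_{\bH}^{\frac12 i s^2}$ gives a cross term $\nu_{\bH}^{ist}$; writing it out is the standard ``$\lambda^{\frac12 i(s+t)^2}=\lambda^{\frac12 is^2}\lambda^{\frac12 it^2}\lambda^{ist}$'' bookkeeping. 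I expect the main obstacle to be the first bullet of the second paragraph: carefully justifying that a twisted-invariance relation of the shape \Cref{eq:tls} forces the cocycle to be $\nu_{\bH}^{\frac12 it^2}\delta^{it}$ for a \emph{single} strictly positive $\delta$ (rather than merely some unitary cocycle), which requires quoting \cite[Proposition 5.5]{vaes-rn} and \cite[\S A.3]{tak2} precisely and checking that the one-parameter unitary group so produced is genuinely of ``strictly positive power'' type; the remaining manipulations are formal once that is in place.
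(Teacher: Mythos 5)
Your proposal is correct and matches the paper's argument: the paper likewise reduces $(D\cT_r:D\cT_l)_t$ to a cocycle of scalar weights by composing with a Haar weight on $\bG/\bH$ (using the $\omega$-independence from \cite[Proposition 6.1]{haag2}), feeds the relative-invariance relation supplied by \Cref{le:tsigma} into \cite[Proposition 5.5]{vaes-rn} to obtain the form $\nu_{\bH}^{\frac 12 it^2}\delta^{it}$, and derives \Cref{eq:sigmatdel} from the cocycle identity of \cite[Proposition 6.3]{haag2}. The only slip is that your displayed intertwining relation has $\cT_l$ and $\cT_r$ interchanged relative to the standard convention (compare \Cref{eq:strconj}), but it plays no essential role in your argument.
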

\begin{proof}
  By \cite[Definition 6.2]{haag2}, the Radon-Nikodym derivative $(D\cT':D\cT)_t$ between two operator-valued weights is simply $(D\omega \cT':D\omega\cT)_t$ for any n.s.f. weight $\omega$ (since that derivative does not depend on $\omega$ \cite[Proposition 6.1]{haag2}). We are thus free to choose the weight conveniently:
  \begin{equation*}
    (D\cT_r:D\cT_l)_t = (D\varphi_{\bG/\bH}\circ\cT_r:D\varphi_{\bG/\bH}\circ\cT_l)_t,\ t\in \bR. 
  \end{equation*}
  Assuming \Cref{eq:phipsi} (as we will), the right-hand weight is nothing but $\varphi_{\bG}$, and its modular group is $\sigma_{\bG,t}$. Under that group, the other weight evolves as follows:
  \begin{align*}
    \varphi_{\bG/\bH}\circ\cT_r\sigma_{\bG,t} &= \nu_{\bH}^{-t}\varphi_{\bG/\bH}\circ\sigma_{\bG,t}\cT_r \quad\text{\Cref{eq:trs}}\\
                                              &= \nu_{\bH}^{-t}\varphi_{\bG/\bH}\circ\sigma_{\bG/\bH,t}\cT_r\quad\text{\cite[Theorem 4.7]{haag1}}\\
                                              &=\nu_{\bH}^{-t}\varphi_{\bG/\bH}\circ\cT_r\quad\text{($\varphi_{\bG/\bH}$ invariant under its own modular group)}.
  \end{align*}
  Now \cite[Proposition 5.5, (ii) $\Rightarrow$ (iv)]{vaes-rn} shows that
  \begin{equation*}
    (D\varphi_{\bG/\bH}\circ\cT_r:D\varphi_{\bG/\bH}\circ\cT_l)_t = \nu_{\bH}^{\frac 12 it^2}\delta^{it},
  \end{equation*}
  i.e. \Cref{eq:dtdt}. That these elements are actually in the relative commutant of $L^{\infty}(\bG/\bH)$ is a general feature of cocycle derivatives between operator-valued weights (\cite[Proposition 6.1]{haag2} again).

  As for \Cref{eq:sigmatdel}, it follows from \Cref{eq:dtdt} and the cocycle property of the Radon-Nikodym derivatives \cite[Proposition 6.3 (2)]{haag2}:
  \begin{equation*}
    (D\cT_r:D\cT_l)_{s+t} = (D\cT_r:D\cT_l)_s \sigma^{\cT_l}_s(D\cT_r:D\cT_l)_t.
  \end{equation*}
\end{proof}

We now have the object, alluded to in the discussion following \Cref{re:lie}, that captures the discrepancy between $\cT_r$ and $\cT_l$:

\begin{definition}\label{def:relmodel}
  Let $\bH\trianglelefteq\bG$ be a closed, normal, locally compact quantum subgroup.

  The {\it relative modular element} $\delta_{\bG\triangleright \bH}$ is the positive element affiliated with
  \begin{equation*}
    L^{\infty}(\bG/\bH)^c = L^{\infty}(\bG)\cap L^{\infty}(\bG/\bH)'
  \end{equation*}
  provided by \Cref{th:relmodel}, determined by
  \begin{equation}\label{eq:relmodel}
    (D\cT_r:D\cT_l)_t = \nu_{\bH}^{\frac 12 it^2}\delta_{\bG\triangleright\bH}^{it}. 
  \end{equation}
\end{definition}

We are now ready to generalize \Cref{th:resmod} to non-central quantum subgroups and provide the quantum counterpart to \Cref{eq:normnotcent-cls}. 

\begin{theorem}\label{th:normnotcent}
  For a closed, normal, locally compact quantum subgroup $\bH\trianglelefteq\bG$ we have
  \begin{equation}\label{eq:disintdel}
    \delta_{\bG} = \delta_{\bG\triangleright H}\delta_{\bG/\bH}. 
  \end{equation}
\end{theorem}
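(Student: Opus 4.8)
The plan is to compute the Radon–Nikodym cocycle $(D\psi_{\bG}:D\varphi_{\bG})_t$ in two different ways and compare. On the one hand, as recalled in the proof of \Cref{th:resmod}, $(D\psi_{\bG}:D\varphi_{\bG})_t = \nu_{\bG}^{\frac12 it^2}\delta_{\bG}^{it}$. On the other hand, I want to insert the normal subgroup $\bH$: using the Weyl disintegration $\varphi_{\bG}=\varphi_{\bG/\bH}\circ\cT_l$ and $\psi_{\bG}=\psi_{\bG/\bH}\circ\cT_r$ (valid under the scaling normalization \Cref{eq:phipsi}, which \Cref{le:lrhaar} permits), I can write
\begin{equation*}
  (D\psi_{\bG}:D\varphi_{\bG})_t = (D\psi_{\bG/\bH}\circ\cT_r:D\varphi_{\bG/\bH}\circ\cT_l)_t,
\end{equation*}
and then split this cocycle through an intermediate weight, the natural candidate being $\varphi_{\bG/\bH}\circ\cT_r$ (or symmetrically $\psi_{\bG/\bH}\circ\cT_l$). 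The chain rule for cocycle derivatives gives
\begin{equation*}
  (D\psi_{\bG/\bH}\circ\cT_r:D\varphi_{\bG/\bH}\circ\cT_l)_t
  = (D\psi_{\bG/\bH}\circ\cT_r:D\varphi_{\bG/\bH}\circ\cT_r)_t\cdot(D\varphi_{\bG/\bH}\circ\cT_r:D\varphi_{\bG/\bH}\circ\cT_l)_t.
\end{equation*}
The second factor is exactly $\nu_{\bH}^{\frac12 it^2}\delta_{\bG\triangleright\bH}^{it}$ by \Cref{eq:relmodel} (this is the definition of the relative modular element). The first factor is a cocycle between two weights that differ by pre-composition with the \emph{same} operator-valued weight $\cT_r$, so by the invariance of cocycle derivatives of weights under a common operator-valued weight (\cite[Theorem 4.7]{haag1}, exactly as used in \Cref{th:resmod}) it equals $(D\psi_{\bG/\bH}:D\varphi_{\bG/\bH})_t = \nu_{\bG/\bH}^{\frac12 it^2}\delta_{\bG/\bH}^{it}$.

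Putting the three computations together yields
\begin{equation*}
  \nu_{\bG}^{\frac12 it^2}\delta_{\bG}^{it}
  = \nu_{\bG/\bH}^{\frac12 it^2}\delta_{\bG/\bH}^{it}\cdot\nu_{\bH}^{\frac12 it^2}\delta_{\bG\triangleright\bH}^{it}.
\end{equation*}
By \Cref{pr:scalemult} the scalar factors cancel ($\nu_{\bG}=\nu_{\bH}\nu_{\bG/\bH}$), leaving $\delta_{\bG}^{it}=\delta_{\bG/\bH}^{it}\,\delta_{\bG\triangleright\bH}^{it}$ — but to turn this into the operator identity \Cref{eq:disintdel} I must first know that the three positive operators involved pairwise strongly commute, so that the products of their imaginary powers are again imaginary powers of products. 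The element $\delta_{\bG\triangleright\bH}$ is affiliated with $L^{\infty}(\bG/\bH)^c$ and $\delta_{\bG/\bH}$ is affiliated with $L^{\infty}(\bG/\bH)$, so these two commute; that $\delta_{\bG}$ strongly commutes with $\delta_{\bG/\bH}$ (equivalently, that $\delta_{\bG/\bH}^{it}$ and $\delta_{\bG\triangleright\bH}^{it}$ generate a one-parameter group whose generator is $i\log$ of a single positive operator) is most cleanly seen \emph{after} the fact: once the scalar-free identity $\delta_{\bG}^{it}=\delta_{\bG/\bH}^{it}\,\delta_{\bG\triangleright\bH}^{it}$ is established for all $t$ as an identity of unitaries, the left side is a genuine one-parameter unitary group, which forces $\delta_{\bG/\bH}^{it}$ and $\delta_{\bG\triangleright\bH}^{it}$ to commute (apply it at $t$ and at $-t$, or differentiate), hence $\delta_{\bG/\bH}$ and $\delta_{\bG\triangleright\bH}$ strongly commute and their product $\delta_{\bG/\bH}\delta_{\bG\triangleright\bH}$ is a well-defined strictly positive operator whose $it$-power is $\delta_{\bG/\bH}^{it}\delta_{\bG\triangleright\bH}^{it}=\delta_{\bG}^{it}$; uniqueness of the operator with a given one-parameter group of imaginary powers (the $i\log\delta=\lim_{t\to0}(\delta^{it}-1)/t$ argument already invoked in the proof of \Cref{th:resmod}) then gives $\delta_{\bG}=\delta_{\bG/\bH}\delta_{\bG\triangleright\bH}$, which is \Cref{eq:disintdel}.

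The main obstacle I anticipate is bookkeeping rather than conceptual: making sure that the intermediate "weight" $\varphi_{\bG/\bH}\circ\cT_r$ is legitimately n.s.f. so that the chain rule \cite[Proposition 6.3]{haag2} and the operator-valued-weight invariance \cite[Theorem 4.7]{haag1} apply to it (faithfulness and semifiniteness of $\varphi\circ\cT$ when $\cT$ is a faithful semifinite operator-valued weight is standard, but should be cited), and keeping the two normalizations in \Cref{eq:phipsi} consistent throughout — this is what \Cref{le:lrhaar} and \Cref{pr:scalemult} are for, and indeed the clean cancellation of the $\nu$'s is the payoff of having proved \Cref{pr:scalemult} first. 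A secondary point to handle carefully is the strong-commutation of $\delta_{\bG}$ with $\delta_{\bG/\bH}$, which the theorem statement in the introduction asserts; the argument above extracts it from the established unitary identity, but one could alternatively note directly that $\delta_{\bG/\bH}^{it}\in L^{\infty}(\bG/\bH)$ while $\delta_{\bG\triangleright\bH}^{it}\in L^{\infty}(\bG/\bH)^c$ commute, and that $\delta_{\bG}^{it}$, being their product, therefore commutes with each factor.
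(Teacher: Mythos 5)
Your proposal is correct and follows essentially the same route as the paper: both compute $(D\psi_{\bG}:D\varphi_{\bG})_t$ in two ways via \Cref{le:lrhaar}, split the resulting cocycle by the chain rule, identify the two factors as $(D\cT_r:D\cT_l)_t$ and $(D\psi_{\bG/\bH}:D\varphi_{\bG/\bH})_t$ using \cite[Theorem 4.7]{haag1} and \Cref{eq:relmodel}, and cancel the scaling constants via \Cref{pr:scalemult}. The only cosmetic difference is your choice of intermediate weight in the chain rule ($\varphi_{\bG/\bH}\circ\cT_r$ rather than the paper's $\psi_{\bG/\bH}\circ\cT_l$), which merely reverses the order of the two commuting factors; your alternative direct observation that $\delta_{\bG/\bH}$ is affiliated with $L^{\infty}(\bG/\bH)$ while $\delta_{\bG\triangleright\bH}$ is affiliated with its relative commutant is exactly how the paper justifies the strong commutation up front.
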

\begin{proof}
  Since
  \begin{itemize}
  \item $\delta_{\bG/\bH}$, which is affiliated with $L^{\infty}(\bG/\bH)$;
  \item and $\delta_{\bG\triangleright\bH}$, affiliated with the relative commutant $L^{\infty}(\bG/\bH)^c$ by \Cref{th:relmodel},
  \end{itemize}
  the two {\it strongly commute} \cite[\S 11.5]{br} in the sense that the spectral projections of one commute with those of the other. The {\it strong product} of \cite[discussion following Definition IX.2.11]{tak2} thus makes sense and is again a positive (unbounded, typically) operator; this is the meaning of the right-hand side of \Cref{eq:disintdel}.

  On the one hand, we have
  \begin{equation}\label{eq:dpsidphi}
    (D\psi_{\bG}:D\varphi_{\bG})_t = \nu_{\bG}^{\frac 12it^2}\delta_{\bG}^{it}
  \end{equation}
  by \cite[Propositoin 5.5]{vaes-rn} and \cite[Proposition 6.8 (3)]{kvcast}. On the other,
  \begin{align*}
    (D\psi_{\bG}:D\varphi_{\bG})_t &= (D\psi_{\bG/\bH}\circ \cT_r:D\varphi_{\bG/\bH}\circ \cT_l)_t\quad\text{\Cref{le:lrhaar}}\\
                                   &=(D\psi_{\bG/\bH}\circ \cT_r:D\psi_{\bG/\bH}\circ \cT_l)_t
                                     (D\psi_{\bG/\bH}\circ \cT_l:D\varphi_{\bG/\bH}\circ \cT_l)_t\quad\text{\cite[Theorem VIII.3.2]{tak2}}\\
                                   &=(D\cT_r:D\cT_l)_t
                                     (D\psi_{\bG/\bH}\circ \cT_l:D\varphi_{\bG/\bH}\circ \cT_l)_t
                                     \quad\text{\cite[Definition 6.2]{haag2}}\\
                                   &=(D\cT_r:D\cT_l)_t
                                     (D\psi_{\bG/\bH}:D\varphi_{\bG/\bH})_t
                                     \quad\text{\cite[Theorem 4.7 (2)]{haag1}}\\
                                   &=\nu_{\bH}^{\frac 12 it^2}\delta_{\bG\triangleright \bH}^{it}
                                     (D\psi_{\bG/\bH}:D\varphi_{\bG/\bH})_t
                                     \quad\text{\Cref{eq:relmodel}}\\
                                   &=\nu_{\bH}^{\frac 12 it^2}\delta_{\bG\triangleright \bH}^{it}
                                     \nu_{\bG/\bH}^{\frac 12 it^2}\delta_{\bG/\bH}^{it}
                                     \quad\text{as in \Cref{eq:dpsidphi}, applied to $\bG/\bH$}\\
                                   &=\nu_{\bG}^{\frac 12 it^2}\delta_{\bG\triangleright \bH}^{it}\delta_{\bG/\bH}^{it}\quad \text{\Cref{pr:scalemult}}.
  \end{align*}
  A comparison with \Cref{eq:dpsidphi} delivers the conclusion.
\end{proof}

\begin{remark}
  \Cref{pr:scalemult} was not, strictly speaking, necessary in the proof of \Cref{th:normnotcent}, for we could have reversed the implication as in the proof of \Cref{th:resmod}: upon obtaining the equality
  \begin{equation*}
    \nu_{\bG}^{\frac 12it^2}\delta_{\bG}^{it} =
    \nu_{\bH}^{\frac 12 it^2}
    \nu_{\bG/\bH}^{\frac 12 it^2}
    \cdot
    \delta_{\bG\triangleright \bH}^{it}
    \delta_{\bG/\bH}^{it}
  \end{equation*}
  the quadratic and linear factors automatically separate to give
  \begin{equation*}
    \nu_{\bG}^{\frac 12it^2} =
    \nu_{\bH}^{\frac 12 it^2}
    \nu_{\bG/\bH}^{\frac 12 it^2}
    \Rightarrow
    \nu_{\bG} = \nu_{\bH}\nu_{\bG/\bH}
  \end{equation*}
  (i.e. \Cref{pr:scalemult}) and the target equation \Cref{eq:disintdel}.  
\end{remark}

It will be convenient, for future reference, to collect a few assorted general remarks on relative modular elements.

\begin{proposition}\label{pr:relmodel}
  Let $\iota:\bH\trianglelefteq \bG$ be a closed, normal locally compact quantum subgroup and $\delta=\delta_{\bG\triangleleft \bH}$ the relative modular element of \Cref{def:relmodel}. The following assertions hold.
  \begin{enumerate}[(1)]
  \item\label{item:3} $\Delta_{\bG}(\delta)=\delta\otimes \delta$.    
  \item\label{item:4} $\tau_{\bG,t}(\delta)=\delta$ and $R_{\bG}(\delta)=\delta^{-1}$.
  \item\label{item:5}
    \begin{equation}\label{eq:irrel}
      L^{\infty}(\bG)\ni'\delta\stackrel{\iota_r}{\longmapsto} \delta\otimes \delta_{\bH}\in'L^{\infty}(\bG)\otimes L^{\infty}(\bH),
    \end{equation}
    where primed belonging symbols denote affiliation, per \Cref{not:inprime}.
  \item\label{item:6} similarly,
    \begin{equation}\label{eq:ilrel}
      L^{\infty}(\bG)\ni'\delta\stackrel{\iota_l}{\longmapsto} \delta_{\bH}\otimes \delta\in' L^{\infty}(\bH)\otimes L^{\infty}(\bG).
    \end{equation}
  \item\label{item:7}
    \begin{equation}\label{eq:strstl}
      \sigma^{\cT_r}_{\bG,t}(\delta) = \sigma^{\cT_l}_{\bG,t}(\delta) = \nu_{\bH}^t\delta.
    \end{equation}
  \item\label{item:8}
    \begin{equation}\label{eq:strconj}
      \sigma^{\cT_r}_{\bG,t} = \delta^{it}\sigma^{\cT_l}_{\bG,t}(\cdot)\delta^{-it}. 
    \end{equation}
  \end{enumerate}  
\end{proposition}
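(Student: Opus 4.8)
The plan is to deduce every one of the six items from the single factorization $\delta_{\bG}^{it}=\delta^{it}\,\delta_{\bG/\bH}^{it}$ of strongly commuting strictly positive operators supplied by \Cref{th:normnotcent} (the strong commutativity being the one already recorded there: $\delta_{\bG/\bH}\in' L^{\infty}(\bG/\bH)$ whereas $\delta\in' L^{\infty}(\bG/\bH)^{c}$), combined with the corresponding facts for the \emph{honest} modular elements $\delta_{\bG}$, $\delta_{\bG/\bH}$. Specifically I will use that $\delta_{\bG}$ is group-like, $\tau_{\bG,t}$-invariant and satisfies $R_{\bG}(\delta_{\bG})=\delta_{\bG}^{-1}$, $\iota_{r}(\delta_{\bG})=\delta_{\bG}\otimes\delta_{\bH}$, $\iota_{l}(\delta_{\bG})=\delta_{\bH}\otimes\delta_{\bG}$ (\cite[Proposition 7.12]{kvcast} and \cite[Definition 3.3, Theorem 3.4, Corollary 3.9]{bcv}); that on the von Neumann subalgebra $L^{\infty}(\bG/\bH)\subseteq L^{\infty}(\bG)$ the maps $\Delta_{\bG}$, $\tau_{\bG,t}$, $R_{\bG}$ restrict to $\Delta_{\bG/\bH}$, $\tau_{\bG/\bH,t}$, $R_{\bG/\bH}$ (\cite[Theorem 2.11]{vv}, \Cref{cor:homoginv}, and the discussion preceding \Cref{le:intert-norm}), under which $\delta_{\bG/\bH}$ is once more group-like, fixed and inverted; and that, by the very definitions \Cref{eq:homog}, $\iota_{r}$ sends $\delta_{\bG/\bH}^{it}\in L^{\infty}(\bG/\bH)$ to $\delta_{\bG/\bH}^{it}\otimes1$ while $\iota_{l}$ sends $\delta_{\bG/\bH}^{it}\in L^{\infty}(\bH\backslash\bG)=L^{\infty}(\bG/\bH)$ to $1\otimes\delta_{\bG/\bH}^{it}$.

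With this in place, items \cref{item:3,item:4,item:5,item:6} are all the same move. Apply the relevant normal $*$-(anti)homomorphism $\Phi$ (one of $\Delta_{\bG}$, $\tau_{\bG,t}$, $R_{\bG}$, $\iota_{r}$, $\iota_{l}$) to $\delta_{\bG}^{it}=\delta^{it}\delta_{\bG/\bH}^{it}$; since the two factors commute, so do their $\Phi$-images, so $\Phi$ being multiplicative gives $\Phi(\delta_{\bG}^{it})=\Phi(\delta^{it})\Phi(\delta_{\bG/\bH}^{it})$ regardless of order even when $\Phi=R_{\bG}$; now substitute the known values of $\Phi$ on $\delta_{\bG}^{it}$ and on $\delta_{\bG/\bH}^{it}$ and cancel the invertible, commuting factor coming from $\delta_{\bG/\bH}$. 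For instance $\Delta_{\bG}(\delta^{it})\,(\delta_{\bG/\bH}^{it}\otimes\delta_{\bG/\bH}^{it})=\delta_{\bG}^{it}\otimes\delta_{\bG}^{it}=(\delta^{it}\otimes\delta^{it})(\delta_{\bG/\bH}^{it}\otimes\delta_{\bG/\bH}^{it})$ yields \cref{item:3}; the same computation with $\tau_{\bG,t}$ and $R_{\bG}$ yields \cref{item:4}; with $\iota_{r}$ one gets $\iota_{r}(\delta^{it})(\delta_{\bG/\bH}^{it}\otimes1)=\delta_{\bG}^{it}\otimes\delta_{\bH}^{it}=(\delta^{it}\otimes\delta_{\bH}^{it})(\delta_{\bG/\bH}^{it}\otimes1)$, hence \cref{item:5}, and symmetrically \cref{item:6}. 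Throughout, the passage from the identity on the unitary group $\{\delta^{it}\}_{t\in\bR}$ back to the asserted identity for the (generally unbounded) strictly positive $\delta$ is the standard correspondence between strictly positive affiliated operators and their one-parameter unitary groups.

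The first equality of \cref{item:7} is just \Cref{eq:sigmatdel} reread as $\sigma^{\cT_l}_{\bG,t}(\delta^{is})=\nu_{\bH}^{ist}\delta^{is}=(\nu_{\bH}^{t}\delta)^{is}$. Item \cref{item:8} is the operator-valued-weight incarnation of Connes' cocycle theorem: for n.s.f.\ operator valued weights $\cT_{l},\cT_{r}:L^{\infty}(\bG)\to\widehat{L^{\infty}(\bG/\bH)}_{+}$ one has $\sigma^{\cT_r}_{\bG,t}=\operatorname{Ad}\!\big((D\cT_r:D\cT_l)_{t}\big)\circ\sigma^{\cT_l}_{\bG,t}$ on the relative commutant $L^{\infty}(\bG/\bH)^{c}$ — apply \cite[Theorem VIII.3.2]{tak2} to the weights $\theta\circ\cT_{l}$, $\theta\circ\cT_{r}$ for an auxiliary n.s.f.\ weight $\theta$ on $L^{\infty}(\bG/\bH)$ and restrict, using \cite[Definition 6.2, Proposition 6.3]{haag2} — and since $(D\cT_r:D\cT_l)_{t}=\nu_{\bH}^{\frac12 it^{2}}\delta^{it}$ by \Cref{eq:relmodel} with the scalar $\nu_{\bH}^{\frac12 it^{2}}$ dropping out of $\operatorname{Ad}$, this is exactly \Cref{eq:strconj}. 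Finally the second equality of \cref{item:7} falls out of \cref{item:8} evaluated at $\delta^{is}$: $\sigma^{\cT_r}_{\bG,t}(\delta^{is})=\delta^{it}\sigma^{\cT_l}_{\bG,t}(\delta^{is})\delta^{-it}=\delta^{it}(\nu_{\bH}^{t}\delta)^{is}\delta^{-it}=(\nu_{\bH}^{t}\delta)^{is}$, the last step because $\delta^{it}$ commutes with every power of $\delta$. The only points demanding genuine care are the bookkeeping with strongly commuting unbounded operators (formal, once one passes to the bounded unitaries $\delta^{it}$) and the verification that $\Delta_{\bG}$, $\tau_{\bG,t}$, $R_{\bG}$, $\iota_{l,r}$ do restrict as claimed on $L^{\infty}(\bG/\bH)$ (essentially already in \Cref{se.prel}); I do not anticipate a substantive obstacle.
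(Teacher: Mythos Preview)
Your proposal is correct and essentially matches the paper's proof. Items \cref{item:3}--\cref{item:6} are handled identically (factor $\delta_{\bG}^{it}=\delta^{it}\delta_{\bG/\bH}^{it}$ via \Cref{th:normnotcent}, apply the relevant map, use the known behaviour on $\delta_{\bG}$ and $\delta_{\bG/\bH}$, cancel); the only cosmetic difference is that for \cref{item:6} the paper derives \Cref{eq:ilrel} from \Cref{eq:irrel} via \Cref{le:rinv} rather than repeating the direct argument with $\iota_l$. For \cref{item:8} you invoke the abstract Connes cocycle relation $\sigma^{\cT_r}_{t}=\mathrm{Ad}((D\cT_r:D\cT_l)_t)\circ\sigma^{\cT_l}_{t}$ directly, whereas the paper instead computes through the concrete weights $\psi_{\bG}=\psi_{\bG/\bH}\circ\cT_r$ and $\varphi_{\bG}=\varphi_{\bG/\bH}\circ\cT_l$, using $\sigma'_{\bG,t}=\delta_{\bG}^{it}\sigma_{\bG,t}(\cdot)\delta_{\bG}^{-it}$ and then splitting $\delta_{\bG}^{it}=\delta^{it}\delta_{\bG/\bH}^{it}$; your route is slightly more direct but both are the same Connes relation in different guises. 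One small slip: in your treatment of \cref{item:7} you have ``first'' and ``second'' equality swapped relative to the visual order in \Cref{eq:strstl} (what \Cref{eq:sigmatdel} gives is the \emph{last} equality $\sigma^{\cT_l}_{\bG,t}(\delta)=\nu_{\bH}^t\delta$), but the logic is sound.
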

\begin{proof}
  Item \Cref{item:3} follows from
  \begin{itemize}
  \item the analogous statement (\cite[Proposition 7.12 (1)]{kvcast}) for the plain modular elements $\delta_{\bG}$ and $\delta_{\bG/\bH}$, which in the context of \Cref{th:normnotcent} strongly commute;
  \item together with \Cref{eq:disintdel};
  \item and the fact that the embedding
    \begin{equation}\label{eq:ghg}
      L^{\infty}(\bG/\bH)\subseteq L^{\infty}(\bG)
    \end{equation}
    intertwines the comultiplications $\Delta_{\bG/\bH}$ and $\Delta_{\bG}$. 
  \end{itemize}
  The argument is very similar for part \Cref{item:4}: analogous statements hold for $\delta_{\bG}$ and $\delta_{\bG/\bH}$ \cite[Proposition 7.12 (2)]{kvcast}, the inclusion \Cref{eq:ghg} intertwines both scaling groups and unitary antipodes \cite[Proposition A.5]{bjv}, and we can again apply \Cref{eq:disintdel}.

  To obtain \Cref{eq:irrel}, note that
  \begin{itemize}
  \item $\iota_r(\delta_{\bG})=\delta_{\bG}\otimes \delta_{\bH}$ \cite[Theorem 3.4, Corollary 3.9]{bcv};
  \item $\iota_r(\delta_{\bG/\bH})=\delta_{\bG}\otimes 1$ because $\delta_{\bG/\bH}$ is affiliated with \Cref{eq:homog};
  \item hence the conclusion, per \Cref{eq:disintdel}.
  \end{itemize}
  
  \Cref{eq:ilrel} is a consequence of \Cref{eq:irrel}, \Cref{le:rinv} and the fact that unitary antipodes turn all modular elements (absolute or relative) into their inverses (\cite[Proposition 7.12 (2)]{kvcast} and part \Cref{item:4} of this proposition).

  The last equality in \Cref{eq:strstl} is nothing but \Cref{eq:sigmatdel}, whereas the first equality will follow once we have \Cref{eq:strconj}; it thus remains to prove the latter. For that purpose, note that for
  \begin{equation*}
    a\in L^{\infty}(\bG/\bH)^c
  \end{equation*}
  we have
  \begin{align*}
    \sigma^{\cT_r}_{\bG,t}(a) &= \sigma^{\psi_{\bG/\bH}\circ\cT_r}_{\bG,t}(a)
                                \quad\text{by \cite[Definition 6.2]{haag2}}\\
                              &= \sigma^{\psi_{\bG}}_{\bG,t}(a)
                                \quad\text{\Cref{eq:phipsi}}\\
                              &=\delta_{\bG}^{it}\sigma^{\varphi_{\bG}}_{\bG,t}(a)\delta_{\bG}^{-it}
                                \quad\text{\cite[Proposition 7.12 (5)]{kvcast}}\\
                              &=\delta_{\bG}^{it}\sigma^{\varphi_{\bG/\bH}\circ\cT_l}_{\bG,t}(a)\delta_{\bG}^{-it}
                                \quad\text{\Cref{eq:phipsi} again}\\
                              &=\delta_{\bG}^{it}\sigma^{\cT_l}_{\bG,t}(a)\delta_{\bG}^{-it}
                                \quad\text{once more \cite[Definition 6.2]{haag2}}\\
                              &=\delta^{it}\delta_{\bG/\bH}^{it}\sigma^{\cT_l}_{\bG,t}(a)\delta_{\bG/\bH}^{-it}\delta^{-it}
                                \quad\text{\Cref{th:normnotcent}}\\
                              &=\delta^{it}\sigma^{\cT_l}_{\bG,t}(a)\delta^{-it}
                                \quad\text{because the middle factor is in the commutant $L^{\infty}(\bG/\bH)^c$}.
  \end{align*}
  This concludes the proof of \Cref{item:8} and the result as a whole.
\end{proof}

The block decomposition \Cref{eq:blockgen} suggests that \Cref{th:resmod} ought to generalize past central subgroups, to the case when the upper left-hand block has trivial determinant. We isolate that situation. 

\begin{proposition}\label{pr:trivact}
  For a closed, normal locally compact quantum subgroup $\iota:\bH\trianglelefteq \bG$ the two operator-valued weights
  \begin{equation*}
    \cT_l,\cT_r:L^{\infty}(\bG)\to \widehat{L^{\infty}(\bG/\bH)}_+ = \widehat{L^{\infty}(\bH\backslash \bG)}_+
  \end{equation*}
  coincide if and only if the relative modular element $\delta_{\bG\triangleright \bH}$ of \Cref{def:relmodel} is $1$.

  Furthermore, in that case $\bH$ is unimodular. 
\end{proposition}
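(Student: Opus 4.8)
The first equivalence should follow almost directly from the structure already set up. The relative modular element $\delta_{\bG\triangleright\bH}$ is characterized by $(D\cT_r:D\cT_l)_t = \nu_{\bH}^{\frac12 it^2}\delta_{\bG\triangleright\bH}^{it}$; by \cite[Corollary 3.6]{vaes-rn} (or the basic theory of cocycle Radon--Nikodym derivatives between operator-valued weights, \cite[Proposition 6.3]{haag2}), two operator-valued weights with the same codomain coincide precisely when their relative cocycle is trivial, i.e. $(D\cT_r:D\cT_l)_t = 1$ for all $t$. So the plan is: assume $\cT_l=\cT_r$; then the cocycle is identically $1$, forcing both $\nu_{\bH}^{\frac12 it^2}=1$ and $\delta_{\bG\triangleright\bH}^{it}=1$ for all $t$ (the quadratic and linear-in-$t$ factors separate, exactly as in the remark following \Cref{th:normnotcent}), whence $\nu_{\bH}=1$ and $\delta_{\bG\triangleright\bH}=1$. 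Conversely, if $\delta_{\bG\triangleright\bH}=1$ then $(D\cT_r:D\cT_l)_t=\nu_{\bH}^{\frac12 it^2}$; one then needs $\nu_{\bH}=1$ here too, and this is where \Cref{pr:relmodel}\Cref{item:7} helps — or one simply observes that a cocycle of the form $t\mapsto \lambda^{\frac12 it^2}$ is a genuine $\sigma^{\cT_l}$-cocycle only when $\lambda=1$, since the cocycle identity $(D\cT_r:D\cT_l)_{s+t}=(D\cT_r:D\cT_l)_s\,\sigma^{\cT_l}_s\!\big((D\cT_r:D\cT_l)_t\big)$ with $\sigma^{\cT_l}_s$ acting trivially on scalars gives $\lambda^{\frac12 i(s+t)^2}=\lambda^{\frac12 is^2}\lambda^{\frac12 it^2}$, i.e. $\lambda^{ist}=1$. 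Hence $\nu_{\bH}=1$ and $(D\cT_r:D\cT_l)_t=1$, so $\cT_l=\cT_r$.

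For the unimodularity of $\bH$: the cleanest route uses part \Cref{item:5} (or \Cref{item:6}) of \Cref{pr:relmodel}. We have $\iota_r(\delta_{\bG\triangleright\bH}) = \delta_{\bG\triangleright\bH}\otimes\delta_{\bH}$. If $\delta_{\bG\triangleright\bH}=1$, the left-hand side is $\iota_r(1)=1\otimes 1$, so $1\otimes 1 = 1\otimes\delta_{\bH}$ in the sense of affiliated elements, forcing $\delta_{\bH}=1$, which is exactly the unimodularity of $\bH$. (One should phrase this at the level of spectral projections or of the unitaries $\delta_{\bH}^{it}$ to be rigorous about the unbounded operators, but this is routine.) Alternatively, and more conceptually: the side conclusion $\nu_{\bH}=1$ obtained above, combined with the fact that $\cT_l$ and $\cT_r$ transform under the modular groups $\sigma'_{\bG,t}$, $\sigma_{\bG,t}$ with multiplier $\nu_{\bH}^{\pm t}$ (\Cref{le:tsigma}), shows that when $\nu_{\bH}=1$ the two weights $\cT_l,\cT_r$ are invariant under the respective modular groups in the appropriate sense; but I expect the direct argument via \Cref{eq:irrel} to be the shortest and is what I would write.

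The main obstacle is essentially bookkeeping rather than conceptual: one must be careful that $\delta_{\bG\triangleright\bH}$ and $\nu_{\bH}$ are \emph{jointly} extracted from the single cocycle $(D\cT_r:D\cT_l)_t$, and that the separation of the $\lambda^{\frac12 it^2}$ factor from the $\delta^{it}$ factor is legitimate — this is the same functional-calculus argument used at the end of the proof of \Cref{th:resmod} (recovering a positive operator from $t\mapsto\lambda^{it^2}\delta^{it}$), so it can be cited rather than redone. A secondary point of care: the equivalence "$\cT_l=\cT_r \iff (D\cT_r:D\cT_l)_t\equiv 1$" should be stated for operator-valued weights with a common codomain von Neumann algebra, which is exactly the present situation by normality ($\widehat{L^{\infty}(\bG/\bH)}_+=\widehat{L^{\infty}(\bH\backslash\bG)}_+$); this is \cite[Proposition 6.1, Proposition 6.3]{haag2}. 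With those two facts in hand the proof is short.
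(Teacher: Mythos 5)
Your proposal is correct and follows essentially the same route as the paper: reduce $\cT_l=\cT_r$ to triviality of the cocycle $(D\cT_r:D\cT_l)_t$ via Haagerup (the paper cites \cite[Theorem 6.5]{haag2} for this), extract $\delta_{\bG\triangleright\bH}=1$ and $\nu_{\bH}=1$ from $(D\cT_r:D\cT_l)_t=\nu_{\bH}^{\frac12 it^2}\delta_{\bG\triangleright\bH}^{it}$ together with the cocycle identity (your scalar-cocycle computation is exactly the content of \Cref{eq:sigmatdel} specialized to $\delta=1$), and deduce unimodularity of $\bH$ from $\iota_r(\delta_{\bG\triangleright\bH})=\delta_{\bG\triangleright\bH}\otimes\delta_{\bH}$. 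The only cosmetic difference is the citation for the equivalence ``weights coincide iff cocycle is trivial,'' where the paper points to \cite[Theorem 6.5]{haag2} rather than Propositions 6.1 and 6.3.
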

\begin{proof}
  The two operator-valued weights coincide precisely when $(D\cT_r:D\cT_l)_t=1$ \cite[Theorem 6.5]{haag2}. That this is equivalent to
  \begin{equation*}
    \delta_{\bG\triangleright\bH}^{it}=1,\ \forall t\in \bR\Longleftrightarrow \delta=1
  \end{equation*}
  then follows from \Cref{eq:dtdt} and \Cref{eq:sigmatdel}.

  As for the unimodularity of $\bH$, it too follows from $\delta_{\bG\triangleright \bH}=1$ by \Cref{eq:irrel}.
\end{proof}

\begin{definition}\label{def:trivact}
  Let $\bH\trianglelefteq \bG$ be a closed, normal locally compact quantum subgroup. We say that {\it the conjugation (or adjoint) action of $\bG$ on $\bH$ is measure-preserving} if the equivalent conditions of \Cref{pr:trivact} hold.

  Alternative phrasing: $\bG$ {\it acts measure-preservingly (by conjugation)}.
\end{definition}

As hinted above, we have the following immediate consequence of \Cref{th:normnotcent} (and \Cref{def:trivact}):

\begin{corollary}\label{cor:reltriv}
  Let $\bH\trianglelefteq \bG$ be a closed normal quantum subgroup of an LCQG.
  
  $\bG$ acts measure-preservingly on $\bH$ if and only if $\bG$ and $\bG/\bH$ have the same modular element.  \qedhere
\end{corollary}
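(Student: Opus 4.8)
The plan is to read the statement straight off \Cref{th:normnotcent} and \Cref{def:trivact}, with essentially no new input. By \Cref{def:trivact}, the phrase ``$\bG$ acts measure-preservingly on $\bH$'' is, by fiat, the conjunction of the equivalent conditions of \Cref{pr:trivact}, and one of those conditions is $\delta_{\bG\triangleright\bH}=1$. So it suffices to prove that $\delta_{\bG\triangleright\bH}=1$ is equivalent to the assertion that $\bG$ and $\bG/\bH$ have the same modular element, and \Cref{th:normnotcent}---which supplies the factorization $\delta_{\bG}=\delta_{\bG\triangleright\bH}\,\delta_{\bG/\bH}$ as a strong product of the strongly commuting strictly positive operators $\delta_{\bG\triangleright\bH}\in'L^{\infty}(\bG/\bH)^c$ and $\delta_{\bG/\bH}\in'L^{\infty}(\bG/\bH)$---does precisely that.

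First I would dispatch the easy direction: if $\delta_{\bG\triangleright\bH}=1$, then \Cref{eq:disintdel} reads $\delta_{\bG}=\delta_{\bG/\bH}$ on the nose. For the converse, assume $\delta_{\bG}=\delta_{\bG/\bH}$; combining this with \Cref{eq:disintdel} yields the operator identity $\delta_{\bG\triangleright\bH}\,\delta_{\bG/\bH}=\delta_{\bG/\bH}$ between strongly commuting strictly positive operators. Because strong commutativity makes the strong product well-behaved under functional calculus, passing to the one-parameter unitary groups gives $\delta_{\bG\triangleright\bH}^{it}\,\delta_{\bG/\bH}^{it}=\delta_{\bG/\bH}^{it}$ for all $t\in\bR$; multiplying on the right by the unitary $\delta_{\bG/\bH}^{-it}$ leaves $\delta_{\bG\triangleright\bH}^{it}=1$ for every $t$, hence $\delta_{\bG\triangleright\bH}=1$ (recovering the operator from its unitary group, just as in the closing paragraph of the proof of \Cref{th:resmod}).

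There is no genuinely hard step here; the only place calling for a little care---and the one point where one must invoke strong commutativity rather than manipulate unbounded operators naively---is the cancellation of $\delta_{\bG/\bH}$, which I would carry out through the one-parameter unitary groups $\delta^{it}$ exactly as above.
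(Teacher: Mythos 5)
Your proposal is correct and follows exactly the route the paper intends: the paper states \Cref{cor:reltriv} as an immediate consequence of \Cref{th:normnotcent} and \Cref{def:trivact}, offering no further argument, and your reduction to $\delta_{\bG\triangleright\bH}=1$ via \Cref{pr:trivact} together with the factorization \Cref{eq:disintdel} is precisely that. The extra care you take in cancelling $\delta_{\bG/\bH}$ through the one-parameter unitary groups is a sound way to fill in the detail the paper leaves implicit.
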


\begin{remark}
  Once more, the terminology of \Cref{def:trivact} is meant to convey the analogy to the classical case. To see this, let $\bH\trianglelefteq \bG$ be a closed normal subgroup of an ordinary locally compact group and denote left Haar measures by $\mu$ and as before, the classical modular function $\Delta$ by $\delta(\cdot^{-1})$.

  The disintegration formula \Cref{eq:weyl} and the relation
  \begin{equation*}
    d\mu_{\bG}(y\cdot y^{-1}) = \delta_{\bG}(y) d\mu_{\bG},\ y\in \bG
  \end{equation*}
  (and its analogue for $\bG/\bH$) easily show that
  \begin{align*}
    d\mu_{\bH}(y\cdot y^{-1}) &= \frac{\delta_{\bG}(y)}{\delta_{\bG/\bH}(y)}
                                d\mu_{\bH}\\
                              &= \delta_{\bG\triangleright \bH}(y)
                                d\mu_{\bH}
    \quad\text{\Cref{eq:disintdel}}.\\
  \end{align*}
  This delivers the classical version of \Cref{cor:reltriv}, with the phrase `acts measure-preservingly' being assigned its straightforward meaning.
\end{remark}

\section{Modular elements as morphisms}\label{se:modmor}

Classically, the inverse modular function $\delta_{\bG}$ is a continuous morphism $\bG\to (\bR_{>0},\cdot)$. This is also true in the quantum setting, for $\delta_{\bG}$, $\delta_{\bG\triangleright \bH}$, and more broadly. Echoes of these remark can be seen in \cite[Remark 5.2]{bcv} or the proof of \cite[Theorem 6.1]{dds}, though not quite stated as such. We outline the matter here with some elaboration for future reference, including one application appearing below.

Following the terminology of \cite[\S 7]{kvcast}, and by analogy with the standard phrase in use in the theory of Hopf algebras (e.g. \cite[Definition 1.3.4]{mont}):

\begin{definition}\label{def:gplike}
  Let $\bG$ be an LCQG. A strictly positive element $\delta$ affiliated with $L^{\infty}(\bG)$ or $C_0(\bG)$ or $C^u_0(\bG)$ is {\it group-like} if $\Delta(\delta)=\delta\otimes \delta$.

  In terms of bounded operators only, this is equivalent to
  \begin{equation*}
    \Delta(\delta^{it}) = \delta^{it}\otimes\delta^{it},\ \forall t\in \bR. 
  \end{equation*}
\end{definition}

The following observation merely collects together a number of ready-made results.

\begin{proposition}\label{pr:mortor}
  Let $\bG$ be a locally compact quantum group. The following sets of objects are in mutual bijection
  \begin{enumerate}[(a)]
  \item\label{item:9} morphisms $\bG\to (\bR,+)$;
  \item\label{item:10} strictly positive group-like elements affiliated with $C^u_0(\bG)$;
  \item\label{item:11} strictly positive group-like elements affiliated with $C_0(\bG)$;
  \item\label{item:12} strictly positive group-like elements affiliated with $L^{\infty}(\bG)$.
  \end{enumerate}
\end{proposition}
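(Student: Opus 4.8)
The plan is to establish the bijections by exhibiting the natural maps between consecutive items and checking they are mutually inverse, with the passage between the universal $C^*$, reduced $C^*$, and $W^*$ levels being essentially a soft lifting argument and the passage to $(\bR,+)$-morphisms being the one genuine piece of content.

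First I would treat the equivalence of \Cref{item:10}, \Cref{item:11} and \Cref{item:12}. The surjections $C^u_0(\bG)\twoheadrightarrow C_0(\bG)$ and the inclusion $C_0(\bG)\subseteq L^{\infty}(\bG)$ induce maps from group-like elements at the universal level to the reduced level to the von Neumann level; I must check these are bijections. For injectivity one uses that a strictly positive affiliated element is determined by its bounded one-parameter group $\delta^{it}$ together with faithfulness of the relevant representations. For surjectivity, the key point is that a group-like $\delta$ affiliated with $L^{\infty}(\bG)$ automatically has $\delta^{it}\in C_0(\bG)$ — this is exactly the content recalled in the remark after \Cref{def:lcqg} for $\delta_{\bG}$ (citing \cite[Proposition 7.10]{kvcast}), and the same argument (slice maps applied to $\Delta(\delta^{it})=\delta^{it}\otimes\delta^{it}$, which forces $\delta^{it}$ to lie in the multiplier algebra of $C_0(\bG)$ because $W\in M(C_0(\bG)\otimes C_0(\widehat\bG))$ implements $\Delta$) works for a general group-like element; and then the lift to $C^u_0(\bG)$ works as in \cite[Proposition 10.1]{kus-univ}, using that a group-like unitary one-parameter group lifts along the universal surjection because the obstruction vanishes for group-like elements. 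So I would state these as the two lemmas to prove and indicate that the arguments mirror the cited ones verbatim.

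Next, the bijection between \Cref{item:9} and \Cref{item:10}. A morphism $\bG\to(\bR,+)$ in the sense of \cite{mrw}/\cite{kus-univ} is equivalently a suitable bicharacter, equivalently a Hopf $*$-homomorphism $C^u_0(\bR)\to M(C^u_0(\bG))$; since $C^u_0(\bR)=C_0(\bR)$ with its usual comultiplication, and strictly positive affiliated group-like elements of $C_0(\bR)$ are exactly the functions $s\mapsto e^{cs}$ (equivalently: the characters/affiliated elements generating the one-parameter group $s\mapsto e^{ics}$ on $L^2(\bR)$), such a morphism is the same datum as a strictly positive group-like element affiliated with $C^u_0(\bG)$, obtained as the image of the canonical generator. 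Concretely, given a morphism $f$ one sets $\delta_u:=f^u(\exp)$ where $\exp\in' C^u_0(\bR)$ is the strictly positive group-like element $s\mapsto e^{s}$; conversely, a strictly positive group-like $\delta_u$ affiliated with $C^u_0(\bG)$ determines a unital $*$-homomorphism out of $C^u_0(\bR)\cong C^*(\text{continuous functional calculus in }\log\delta_u)$ which is non-degenerate and comultiplicative precisely by group-likeness. I expect to phrase this via the universal property of $C^u_0(\bR)$ as the universal $C^*$-algebra generated by one strictly positive group-like affiliated element, or equivalently as Pontryagin duality $\widehat{(\bR,+)}\cong(\bR,+)$ together with the standard dictionary (\cite[\S 12]{kus-univ}, \cite[\S\S 3--4]{mrw}).

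The main obstacle I anticipate is the surjectivity half of the $L^\infty$-to-$C_0$ reduction: proving that \emph{every} strictly positive group-like element affiliated with $L^{\infty}(\bG)$ is already affiliated with $C_0(\bG)$. For the specific element $\delta_{\bG}$ this is a nontrivial theorem of \cite{kvcast}, and one has to check that the proof only used group-likeness (plus the general structure of $W$ and the reduced algebra) rather than the special role of $\delta_{\bG}$ in the modular theory. I would handle this by isolating the genuinely used input: $\Delta(\delta^{it})=\delta^{it}\otimes\delta^{it}$ gives, after slicing with $\omega\in L^{\infty}(\widehat\bG)_*$ against the second leg of $W^*(1\otimes\delta^{it})W=\delta^{it}\otimes\delta^{it}$, that $\delta^{it}$ lies in the norm closure of slices of $W$, which is contained in $M(C_0(\bG))$; continuity in $t$ then upgrades affiliation with $C_0(\bG)$ in Woronowicz's sense. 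Everything else — injectivity, the universal lift, and the identification with $(\bR,+)$-morphisms — should be routine given the cited machinery, so the write-up would spend most of its length on this reduction and merely gesture at the rest.
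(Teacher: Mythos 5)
Your proposal is correct and follows essentially the same route as the paper: the three bijections are established exactly as you describe, with (b)$\leftrightarrow$(c) and (c)$\leftrightarrow$(d) reduced to the arguments of \cite[Proposition 10.1]{kus-univ} and \cite[Proposition 7.10]{kvcast} (whose proofs indeed use only group-likeness), and (a)$\leftrightarrow$(b) obtained from the functional calculus for affiliated elements, the paper routing through $C_0(\bR_{>0})$ and the exponential isomorphism where you work directly with $\log\delta$. The only cosmetic difference is that the paper invokes \cite[Proposition 6.5]{kus-fc} for the unique morphism determined by a strictly positive affiliated element rather than a stated universal property of $C_0(\bR)$.
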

\begin{proof}
  Recall \Cref{not:inprime}: $\in'$ denotes the affiliation relation. $C^*$ morphisms extend to affiliated operators \cite[Theorem 1.2]{wor-aff}; we will use this implicitly in the sequel.
  
  {\bf \Cref{item:9} $\leftrightarrow$ \Cref{item:10}.} There is an comultiplication-preserving isomorphism
  \begin{equation}\label{eq:expid}
    C_0(\bR)\ni' \exp \xmapsto[\phantom{----}]{\cong} \id_{\bR_{>0}}\in' C_0(\bR_{>0})
  \end{equation}
  (dual to the usual exponential identification of the groups $(\bR,+)$ and $(\bR_{>0},\cdot)$).
  
  Because $\delta$ is strictly positive there is also a unique morphism $C_0(\bR_{>0})\rightsquigarrow C_0^u(\bG)$ sending $\id_{\bR_{>0}}$ to $\delta$ \cite[Proposition 6.5]{kus-fc}. Composing with \Cref{eq:expid} this gives, for every strictly positive group-like $\delta\in' C_0^u(\bG)$, a unique morphism
  \begin{equation*}
    \pi^u:C_0(\bR)\rightsquigarrow C_0^u(\bG)
  \end{equation*}
  sending $\exp\mapsto \delta$ and intertwining the comultiplications $\Delta_{\bR}$ and $\Delta_{\bG}$ in the sense that
  \begin{equation*}
    \Delta_{\bG}\pi^u = (\pi^u\otimes \pi^u)\Delta_{\bR}. 
  \end{equation*}
  Such a map $\pi^u$ is one of the equivalent ways of specifying a quantum-group morphism $\bG\to \bR$ \cite[Theorem 4.8]{mrw}, so we are done.  

  
  {\bf \Cref{item:10} $\leftrightarrow$ \Cref{item:11}.} Strictly positive group-likes affiliated with $C_0^u(\bG)$ project to such along the surjection $C_0^u(\bG)\to C_0(\bG)$. Conversely, they lift uniquely along the same map as explained in the proof of \cite[Proposition 10.1]{kus-univ}.

  {\bf \Cref{item:11} $\leftrightarrow$ \Cref{item:12}.} One direction is clear, $C_0(\bG)$ being contained in $L^{\infty}(\bG)$ via a coproduct-preserving inclusion. Conversely, strictly positive group-likes affiliated with $L^{\infty}(\bG)$ are in fact $C^*$-affiliated with $C_0(\bG)$, as in the proof of \cite[Proposition 7.10]{kvcast}.
 
  This concludes the proof.
\end{proof}

\begin{notation}\label{not:delunder}
  For a strictly positive group-like $\delta$ affiliated with $L^{\infty}(\bG)$ or $C_0(\bG)$ or $C^u_0(\bG)$ we write we write $\underline{\delta}$ for the corresponding morphism $\bG\to \bR$ attached to it via \Cref{pr:mortor}.
\end{notation}

\begin{corollary}\label{cor:rtinv}
  Let $\bG$ be an LCQG. Strictly positive group-like elements $\delta\in' C_0(\bG)$ are invariant under the scaling group and satisfy $R_{\bG}(\delta)=\delta^{-1}$.
\end{corollary}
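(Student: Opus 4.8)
The plan is to route everything through the correspondence of \Cref{pr:mortor}. Write $\delta_u\in'C^u_0(\bG)$ for the canonical lift of $\delta$ along $C^u_0(\bG)\to C_0(\bG)$, and let $\pi^u:C^u_0(\bR)\rightsquigarrow C^u_0(\bG)$ be the universal incarnation of the attached morphism $\underline{\delta}:\bG\to\bR$, so that $\pi^u(\exp)=\delta_u$ (recall the comultiplication-preserving identification $C_0(\bR)\ni'\exp\cong\id_{\bR_{>0}}\in'C_0(\bR_{>0})$ from the proof of \Cref{pr:mortor}). Since $\tau_{\bG,t}$ and $R_{\bG}$ both lift to the universal level and descend to $C_0(\bG)$, it suffices to establish the two identities for $\delta_u$, and for that we only need the behaviour of $\exp$ under the scaling group and unitary antipode of the target group $(\bR,+)$, together with the compatibility of $\pi^u$ with these objects already used in the proof of \Cref{le:act1par}.

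For the scaling group: $(\bR,+)$ is a classical, hence abelian and unimodular, group, so $\nu_{\bR}=1$ and $\tau_{\bR,t}=\id$. The scaling-group-intertwining property of LCQG morphisms, $\pi^u\tau^u_{\bR,t}=\tau^u_{\bG,t}\pi^u$ (as in \cite[Proposition 3.10]{mrw}, used already for \Cref{le:act1par}), therefore reads simply $\tau^u_{\bG,t}\pi^u=\pi^u$; evaluating on $\exp$ gives $\tau^u_{\bG,t}(\delta_u)=\delta_u$, and projecting along $C^u_0(\bG)\to C_0(\bG)$ yields $\tau_{\bG,t}(\delta)=\delta$. For the unitary antipode: $R_{\bR}$ is the pullback of the inversion $x\mapsto -x$, under which the (strictly positive, affiliated) operator $\exp$, i.e.\ the function $x\mapsto e^x$, is sent to $x\mapsto e^{-x}=\exp^{-1}$. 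Since morphisms intertwine unitary antipodes, $\pi^u R^u_{\bR}=R^u_{\bG}\pi^u$ (the content of \Cref{le:rinv} applied to $\underline{\delta}$), and evaluating at $\exp$ gives $R^u_{\bG}(\delta_u)=\pi^u(\exp^{-1})=\pi^u(\exp)^{-1}=\delta_u^{-1}$; descending to $C_0(\bG)$ gives $R_{\bG}(\delta)=\delta^{-1}$.

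The only point demanding a little care is that the intertwining identities, originally recorded for the algebra maps, be applied to the unbounded affiliated operators $\delta_u$ and $\exp$ — equivalently to their bounded imaginary powers $\delta_u^{it}$, $\exp^{it}$. This is exactly the extension of $C^*$-morphisms to affiliated operators of \cite[Theorem 1.2]{wor-aff} (invoked implicitly throughout \Cref{se:modmor}), so I anticipate no genuine obstacle beyond this bookkeeping; in particular the anti-multiplicativity of $R$ is harmless here, since it is applied to the single commuting family $\{\delta_u^{it}\}_t$.
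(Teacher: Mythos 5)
Your argument is essentially the paper's own proof: both reduce the claim, via the correspondence of \Cref{pr:mortor}, to the canonical group-like $\exp$ on the classical group $\bR$ (where $\tau_{\bR,t}=\id$ and $R_{\bR}$ is pullback of inversion) and then transport the identities through the morphism using the fact that LCQG morphisms intertwine scaling groups and unitary antipodes. The only cosmetic difference is that you pass through the universal level and descend, while the paper applies the intertwining (citing \cite[Remark 12.1]{kus-univ}) directly to the reduced morphism $C_0(\bR)\rightsquigarrow C_0(\bG)$; your attribution of the antipode-intertwining to \Cref{le:rinv} is a slight mislabel, but the underlying fact is the standard one and the proof is correct.
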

\begin{proof}
  We know from \Cref{pr:mortor} that $\delta$ is the image of the canonical group-like $\exp\in' C_0(\bR)$ through a comultiplication-intertwining morphism
  \begin{equation*}
    C_0(\bR)\rightsquigarrow C_0(\bG).
  \end{equation*}
  Such morphisms also intertwine the scaling groups and unitary antipodes \cite[Remark 12.1]{kus-univ}, so it suffices to verify the claim for the universal strictly positive group-like 
  \begin{equation*}
    \delta:=\exp\in' C_0(\bR);
  \end{equation*}
  that verification is immediate, hence the conclusion.
\end{proof}

$C^*$-affiliated elements have an accompanying notion of spectrum \cite[equation (1.20)]{wor-aff}, which by \cite[discussion following Theorem 1.6]{wor-aff} specializes back to the usual concept for concrete unbounded, normal operators on Hilbert spaces (which is the situation we are concerned with here).

For positive group-likes the spectrum has some very pleasant properties.

\begin{proposition}\label{pr:posisgr}
  For an LCQG $\bG$ the strictly positive portion
  \begin{equation*}
    \mathrm{Sp}_{>0}(\delta):=\mathrm{Sp}(\delta)\setminus\{0\}
  \end{equation*}
  of the spectrum of a strictly positive group-like $\delta\in' C_0(\bG)$ is a closed subgroup of the multiplicative group $(\bR_{>0},\cdot)$.
\end{proposition}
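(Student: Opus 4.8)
The plan is to show directly that $S:=\mathrm{Sp}_{>0}(\delta)\subseteq(0,\infty)$ is nonempty, relatively closed, and stable under the multiplication and inversion of $(\bR_{>0},\cdot)$; a nonempty subset of a group that is closed under products and inverses is a subgroup (it contains $1=ss^{-1}$ for any $s\in S$), so this is exactly what is required. Two of these properties are immediate. For relative closedness, regard $\delta$ as a closed positive self-adjoint operator on $L^2(\bG)$ — its spectrum as a $C^*$-affiliated element, in Woronowicz's sense, agreeing with the Hilbert-space spectrum as recalled just before the statement. Then $\mathrm{Sp}(\delta)$ is a closed subset of $[0,\infty)$, so $S=\mathrm{Sp}(\delta)\cap(0,\infty)$ is relatively closed in $\bR_{>0}$. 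For nonemptiness, strict positivity forces $\delta\neq 0$, so $\mathrm{Sp}(\delta)$ is nonempty and cannot equal $\{0\}$ (that would make the spectral projection at $0$ equal to $1$), whence $S\neq\emptyset$.

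For stability under multiplication I would use group-likeness, $\Delta_{\bG}(\delta)=\delta\otimes\delta$. On the one hand, a routine approximate-eigenvector argument gives $S\cdot S\subseteq\mathrm{Sp}(\delta\otimes\delta)$: if $\|\delta\xi_n-s\xi_n\|\to 0$ and $\|\delta\eta_n-t\eta_n\|\to 0$ with $\xi_n,\eta_n$ unit vectors, then $(\delta\otimes\delta)(\xi_n\otimes\eta_n)$ is asymptotically $st\,\xi_n\otimes\eta_n$. On the other hand, $\Delta_{\bG}\colon C_0(\bG)\rightsquigarrow C_0(\bG)\otimes C_0(\bG)$ is a non-degenerate $C^*$-morphism, and such morphisms extend to $C^*$-affiliated elements (as already invoked in the proof of \Cref{pr:mortor}) in a way that does not enlarge the spectrum, so $\mathrm{Sp}(\Delta_{\bG}(\delta))\subseteq\mathrm{Sp}(\delta)$. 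Putting the two together and deleting $0$,
\[
  S\cdot S\ \subseteq\ \mathrm{Sp}(\delta\otimes\delta)\setminus\{0\}\ =\ \mathrm{Sp}(\Delta_{\bG}(\delta))\setminus\{0\}\ \subseteq\ \mathrm{Sp}(\delta)\setminus\{0\}=S.
\]

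For stability under inversion I would invoke \Cref{cor:rtinv}, which gives $R_{\bG}(\delta)=\delta^{-1}$. The unitary antipode $R_{\bG}$ is a $\bC$-linear $*$-anti-automorphism of $L^{\infty}(\bG)$ restricting to one of $C_0(\bG)$, hence it preserves spectra of affiliated elements (it is bijective and fixes the scalars, so $\delta-\lambda$ is invertible iff $R_{\bG}(\delta)-\lambda$ is, with the usual $z$-transform argument in the unbounded case). Thus $\mathrm{Sp}(\delta^{-1})=\mathrm{Sp}(R_{\bG}(\delta))=\mathrm{Sp}(\delta)$, while functional calculus gives $\mathrm{Sp}(\delta^{-1})\setminus\{0\}=\{s^{-1}\ |\ s\in S\}$. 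Hence $S^{-1}=S$, and together with the previous step $S$ is a closed subgroup of $(\bR_{>0},\cdot)$.

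The only genuine content beyond elementary spectral theory is the affiliated-operator bookkeeping: that a non-degenerate $C^*$-morphism applied to a normal affiliated element does not increase the spectrum, and that the unitary antipode preserves it. Both are standard consequences of Woronowicz's framework \cite{wor-aff} (morphisms act on the bounded $z$-transform $z_\delta=\delta(1+\delta^2)^{-1/2}$, from which the spectral inclusion follows), so I anticipate no real obstacle — only the need to be careful that the identification of the abstract $C^*$-spectrum with the concrete Hilbert-space spectrum, noted before the statement, is used consistently.
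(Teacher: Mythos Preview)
Your proof is correct and follows essentially the same route as the paper: both arguments use the comultiplication $\Delta_{\bG}$ together with the spectrum-shrinking property of $C^*$-morphisms (from \cite{wor-aff}) for closure under multiplication, and \Cref{cor:rtinv} with $R_{\bG}$ for closure under inversion. The only cosmetic differences are that the paper cites the spectral theorem for $\mathrm{Sp}(T\otimes T)$ where you run an approximate-eigenvector argument, and the paper only needs the one-sided inclusion $\mathrm{Sp}(R_{\bG}\delta)\subseteq\mathrm{Sp}(\delta)$ where you observe the equality; neither difference is substantive.
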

\begin{proof}
  The spectrum of a positive (possibly unbounded) operator is a closed subset of $\bR_{\ge 0}$, hence the (topological) closure claim. It remains to argue that $\mathrm{Sp}_{>0}(\delta)$ is closed under multiplication and inversion.

  It is a simple application of the spectral theorem (e.g. \cite[Theorem 13.30]{rud-fa}) to show that for a strictly positive $T\in' B(\cH)$ 
  \begin{itemize}
  \item the positive spectrum $\mathrm{Sp}_{>0}(T^{-1})$ is 
    \begin{equation*}
      \mathrm{Sp}_{>0}(T)^{-1}:=\{t^{-1}\ |\ t\in\mathrm{Sp}_{>0}(T)\}
    \end{equation*}    
  \item and similarly, the spectrum $\mathrm{Sp}(T\otimes T)$ is the closure of 
    \begin{equation*}
      \{st\ |\ s,\ t\in\mathrm{Sp}(T)\}.
    \end{equation*}    
  \end{itemize}
  Since
  \begin{itemize}
  \item we have a morphism $\Delta_{\bG}\in\mathrm{Mor}(C_0(\bG),C_0(\bG)^{\otimes 2})$ sending $\delta$ to $\delta\otimes \delta$;
  \item and a morphism $R_{\bG}$ sending $\delta\mapsto \delta^{-1}$ by \Cref{cor:rtinv} ($R_{\bG}$ is anti-multiplicative, but this makes no difference here);
  \item for a morphism $\pi\in\mathrm{Mor}(A,B)$ the spectrum of an image $\pi(T)$ is contained in that of $T$ for any $A$-affiliated $T$ \cite[equation (1.21)]{wor-aff},
  \end{itemize}
  the conclusion follows.
\end{proof}

Let $\delta\in' C_0(\bG)$ be a strictly positive group-like. The closed subgroup
\begin{equation}\label{eq:sp>0}
  \mathrm{Sp}(\delta)_{>0}\le (\bR_{>0},\cdot)
\end{equation}
of \Cref{pr:posisgr} has an alternative interpretation as a group-theoretic invariant attached to the morphism $\underline{\delta}:\bG\to \bR$ in \Cref{not:delunder}. Every quantum-group morphism has a {\it closed image}, introduced in \cite[Definition 4.2]{kks}; we paraphrase that discussion as follows.

\begin{definition}
  Let $\pi:\bH\to \bG$ be a morphism of LCQGs. The {\it closed image} $\overline{\mathrm{im}~\pi}$ of $\pi$ is the smallest closed quantum subgroup $\iota:\overline{\mathrm{im}~\pi}\le \bG$ for which $\pi$ admits a factorization
  \begin{equation*}
    \begin{tikzpicture}[auto,baseline=(current  bounding  box.center)]
      \path[anchor=base] 
      (0,0) node (l) {$\bH$}
      +(2,.5) node (u) {$\overline{\mathrm{im}~\pi}$}
      +(4,0) node (r) {$\bG$.}
      ;
      \draw[->] (l) to[bend left=6] node[pos=.5,auto] {$\scriptstyle $} (u);
      \draw[->] (u) to[bend left=6] node[pos=.5,auto] {$\scriptstyle \iota$} (r);
      \draw[->] (l) to[bend right=6] node[pos=.5,auto,swap] {$\scriptstyle \pi$} (r);
    \end{tikzpicture}
  \end{equation*}
\end{definition}

\begin{theorem}\label{th:clim}
  Let $\bG$ be an LCQG, $\delta\in' C_0(\bG)$ a strictly positive group-like, and $\underline{\delta}:\bG\to \bR$ the morphism associated to it as in \Cref{not:delunder}.

  The closed image
  \begin{equation*}
    \overline{\mathrm{im}~\underline{\delta}}\le (\bR,+)
  \end{equation*}
  is precisely $\log~\mathrm{Sp}_{>0}(\delta)$, i.e. the image of the closed subgroup \Cref{eq:sp>0} under the logarithm isomorphism $(\bR_{>0},\cdot)\cong (\bR,+)$.
\end{theorem}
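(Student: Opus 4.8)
The plan is to pass to the multiplicative picture, identifying $(\bR,+)\cong(\bR_{>0},\cdot)$ via $\log$, so that $\underline{\delta}$ is viewed as a morphism $\bG\to(\bR_{>0},\cdot)$ whose reduced $C^*$-incarnation is the morphism $\pi\in\mathrm{Mor}(C_0(\bR_{>0}),C_0(\bG))$ determined by $\pi(\id_{\bR_{>0}})=\delta$, i.e.\ $\pi(f)=f(\delta)$ via the functional calculus of \cite[Proposition 6.5]{kus-fc} (this is exactly the $\pi$ produced in the proof of \Cref{pr:mortor}). Set $\Gamma:=\mathrm{Sp}_{>0}(\delta)$, which is a closed subgroup of $(\bR_{>0},\cdot)$ by \Cref{pr:posisgr}. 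It suffices to prove $\overline{\mathrm{im}~\underline{\delta}}=\Gamma$ inside $\bR_{>0}$; transporting back through $\log$ then yields the theorem. I would prove the two inclusions separately.

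For $\overline{\mathrm{im}~\underline{\delta}}\le\Gamma$: the spectral mapping theorem for the strictly positive operator $\delta$ shows $\ker\pi=\{f\in C_0(\bR_{>0}):f|_{\Gamma}=0\}$, so $\pi$ factors through the restriction quotient $q:C_0(\bR_{>0})\twoheadrightarrow C_0(\Gamma)$. Since $\Gamma\le\bR_{>0}$ is a closed subgroup, $q$ is the $C^*$-incarnation of the inclusion morphism $\Gamma\hookrightarrow\bR_{>0}$ and in particular intertwines the comultiplications; a short diagram chase using surjectivity of $q$ then upgrades the induced map $C_0(\Gamma)\to M(C_0(\bG))$ to a quantum-group morphism $\bG\to\Gamma$. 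This exhibits a factorization of $\underline{\delta}$ as $\bG\to\Gamma\hookrightarrow\bR_{>0}$, so $\overline{\mathrm{im}~\underline{\delta}}\le\Gamma$ by minimality of the closed image.

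For $\Gamma\le\overline{\mathrm{im}~\underline{\delta}}$: write $\bK:=\overline{\mathrm{im}~\underline{\delta}}$, through which $\underline{\delta}$ factors. A closed quantum subgroup of the classical abelian group $\bR_{>0}$ is itself a classical closed subgroup, so $\bK\le(\bR_{>0},\cdot)$ is an honest closed subgroup and the inclusion has $C^*$-incarnation the restriction $r:C_0(\bR_{>0})\twoheadrightarrow C_0(\bK)$, through which $\pi$ factors. If there were $t_0\in\Gamma\setminus\bK$, choose (Urysohn, $\bK$ closed) $f\in C_0(\bR_{>0})$ with $f(t_0)\ne0$ and $f|_{\bK}=0$; then $r(f)=0$ forces $\pi(f)=f(\delta)=0$, whence $f|_{\Gamma}=0$ by spectral mapping, contradicting $f(t_0)\ne0$. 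Hence $\Gamma\le\bK$, and combining the two inclusions, $\overline{\mathrm{im}~\underline{\delta}}=\Gamma$.

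The substantive ingredients — the spectral mapping theorem and the group structure of $\mathrm{Sp}_{>0}(\delta)$ (\Cref{pr:posisgr}) — are already available, so the remaining work is essentially morphism bookkeeping: checking that the factored $C^*$-maps are non-degenerate and comultiplication-intertwining, hence bona fide LCQG morphisms, and invoking the standard fact that closed quantum subgroups of $(\bR,+)$ are classical. This is where care is needed, but I do not anticipate a genuine obstacle.
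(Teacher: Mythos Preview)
Your proposal is correct and takes essentially the same approach as the paper: both factor the morphism through $C_0$ of the spectrum via functional calculus and then argue minimality from injectivity of the resulting map. The only cosmetic differences are that the paper works on the additive side with $\log\delta$ and cites \cite[Theorem~3.4, Result~6.16]{kus-fc} directly to obtain the factorization-plus-injectivity in one stroke, whereas you work multiplicatively with $\delta$ and unpack the same content via the spectral mapping theorem as two separate inclusions (making explicit, as the paper does not, the needed fact that closed quantum subgroups of $(\bR,+)$ are classical).
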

\begin{proof}
  Since $\delta$ is strictly positive, its functional calculus allows the application of the logarithm to produce a self-adjoint element $\log \delta$ \cite[Definition 7.16]{kus-fc}. The same goes for
  \begin{equation*}
    \log\exp = \id_{\bR}\in' C_0(\bR),
  \end{equation*}
  and since $\underline{\delta}$ intertwines these logarithm operations \cite[Proposition 6.17]{kus-fc} and by definition sends $\exp\mapsto \delta$, we have
  \begin{equation*}
    \underline{\delta}(\id_{\bR}) = \log\delta. 
  \end{equation*}
  In short, then, $\underline{\delta}$ is the unique \cite[Proposition 6.5]{kus-fc} morphism sending $\id_{\bR}$ to $\log\delta$.

  Write
  \begin{equation*}
    \bH:=\log~\mathrm{Sp}_{>0}(\delta)\le \bR. 
  \end{equation*}
  \cite[Result 6.16]{kus-fc} says that $\bH$ is precisely the spectrum of $\log\delta$, so that by \cite[Theorem 3.4]{kus-fc} $\underline{\delta}$ factors as
  \begin{equation*}
    \begin{tikzpicture}[auto,baseline=(current  bounding  box.center)]
      \path[anchor=base] 
      (0,0) node (l) {$C_0(\bR)$}
      +(2,.5) node (u) {$C_0(\bH)$}
      +(4,0) node (r) {$M(C_0^u(\bG))$,}
      ;
      \draw[->] (l) to[bend left=6] node[pos=.5,auto] {$\scriptstyle $} (u);
      \draw[->] (u) to[bend left=6] node[pos=.5,auto] {$\scriptstyle $} (r);
      \draw[->] (l) to[bend right=6] node[pos=.5,auto,swap] {$\scriptstyle \underline{\delta}$} (r);
    \end{tikzpicture}
  \end{equation*}
  where the top left arrow is the obvious restriction map and the top right map is one-to-one. That injectivity in particular means that there is no further factorization through any smaller quotients of $C_0(\bR)$, meaning precisely what was sought: $\bH\le \bR$ the smallest close subgroup factoring $\underline{\delta}$. 
\end{proof}

The application alluded to at the beginning of the section has to do with {\it property (T)} for LCQGs; this is a quantum version of the classical familiar concept (e.g. \cite[Definition 1.1.3]{bdv}). Early references in the quantum setting are \cite[Definition 3.1]{fim} for {\it discrete} quantum groups and, say, \cite[Definition 3.1]{cn} and \cite[\S 6]{dfsw} for the general concept. We also refer to \cite{dsv,dds} (which will be cited more heavily shortly) and {\it their} sources for further information. In brief (\cite[Definitions 2.3 and 3.1]{cn}):

\begin{definition}
  Let $\bG$ be an LCQG.
  \begin{enumerate}[(1)]
  \item Let $U\in M(C_0(\bG)\otimes K(\cH))$ be a unitary $\bG$-representation in the sense, say, of \cite[Definition 2.1]{dsv}. A net $\zeta_i\in \cH$ of unit vectors is {\it almost invariant} (also: constitutes an {\it almost-invariant vector}) if
    \begin{equation*}
      \|U(\eta\otimes\zeta_i)-\eta\otimes \zeta_i\|\to 0
    \end{equation*}
    for all $\eta\in L^2(\bG). $
  \item Similarly, having fixed $U$ again, a vector $\zeta\in \cH$ is {\it invariant} if
    \begin{equation*}
      U(\eta\otimes\zeta)=\eta\otimes \zeta,\ \forall \eta\in L^2(\bG). 
    \end{equation*}
  \item $\bG$ has {\it property (T)} if every unitary representation that has almost-invariant vectors in fact has non-zero invariant vectors.
  \end{enumerate}
\end{definition}

\Cref{th:tnodelta} below is a slight generalization of the fact that property-(T) quantum groups are unimodular. This latter result has appeared before a number of times: \cite[Proposition 3.2]{fim} proves the claim for discrete quantum groups, \cite[Theorem 6.3]{bk} handles second-countable locally compact quantum groups, and \cite[Theorem 6.1]{dds} proves the general result for arbitrary property-(T) LCQGs. These all deal with the specific group-like element $\delta_{\bG}$; among them, the first and third both bear similarities to the argument below.

\begin{theorem}\label{th:tnodelta}
  For an LCQG $\bG$ with property (T) the only strictly positive group-like $\delta\in' C_0(\bG)$ is $1$.
\end{theorem}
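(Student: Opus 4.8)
The plan is to transport, through the dictionary established in \Cref{pr:mortor} and \Cref{th:clim}, the classical fact that a property-(T) group admits no morphism with non-compact amenable (closed) image.

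Suppose for contradiction that $1\neq\delta\in'C_0(\bG)$ is strictly positive and group-like. By \Cref{pr:posisgr} the set $\mathrm{Sp}_{>0}(\delta)$ is a closed subgroup of $(\bR_{>0},\cdot)$, and since $\delta$ is strictly positive, functional calculus shows that $\mathrm{Sp}_{>0}(\delta)=\{1\}$ would force $\delta=1$; our hypothesis thus makes it \emph{non-trivial}. Passing to the associated morphism $\underline{\delta}:\bG\to\bR$ of \Cref{not:delunder} and invoking \Cref{th:clim}, the closed image $\bH:=\overline{\mathrm{im}~\underline{\delta}}=\log\mathrm{Sp}_{>0}(\delta)$ is then a non-trivial closed subgroup of $(\bR,+)$. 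Every such subgroup is isomorphic to $\bZ$ or to $\bR$, so in particular $\bH$ is non-compact and amenable; moreover $\underline{\delta}$ corestricts to a morphism $\rho:\bG\to\bH$ which, by the minimality built into the definition of the closed image, does not factor through any proper closed subgroup of $\bH$ (equivalently: the closed image of $\rho$ is all of $\bH$).

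Next I would form the pullback $V:=\rho^{*}(\lambda_{\bH})$ of the regular representation of $\bH$ along $\rho$ (the pullback of unitary representations along LCQG morphisms being standard, cf.\ \cite{dsv,mrw}). Since $\bH$ is amenable, $\lambda_{\bH}$ has almost-invariant vectors, i.e.\ $\1_{\bH}$ is weakly contained in $\lambda_{\bH}$; pullback preserves weak containment and sends $\1_{\bH}$ to $\1_{\bG}$, so $V$ has almost-invariant vectors. Property (T) of $\bG$ then forces $V$ to have a non-zero $\bG$-invariant vector. The crux of the argument — and the step I expect to be the main obstacle — is to promote such a $\bG$-invariant vector of $V=\rho^{*}(\lambda_{\bH})$ to an $\bH$-invariant vector of $\lambda_{\bH}$ itself; this is precisely the assertion that a morphism with dense (closed) image "detects" invariant vectors, and it should follow from the universal property of $\overline{\mathrm{im}~\rho}$ from \cite{kks} together with the weak-containment formalism for quantum-group representations. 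I would isolate this as a short lemma, phrased cleanly as: a property-(T) LCQG admits no morphism whose closed image is a non-compact amenable LCQG.

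Granting that lemma, we conclude that $\lambda_{\bH}$ has a non-zero invariant vector, so $\1_{\bH}\leq\lambda_{\bH}$ and hence $\bH$ is compact — contradicting the fact that $\bH\cong\bZ$ or $\bH\cong\bR$. Therefore no non-trivial strictly positive group-like $\delta$ can exist, i.e.\ $\delta=1$.
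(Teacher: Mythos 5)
Your first half---reducing via \Cref{pr:mortor}, \Cref{pr:posisgr} and \Cref{th:clim} to the claim that the closed image $\bH=\log\mathrm{Sp}_{>0}(\delta)\le(\bR,+)$ of $\underline{\delta}$ must be trivial---is exactly the paper's argument. The divergence, and the gap, lies in how you rule out $\bH\cong\bZ$ or $\bH\cong\bR$. The step you yourself flag as ``the main obstacle,'' namely that a $\bG$-invariant vector of the pullback $\rho^{*}(\lambda_{\bH})$ descends to an $\bH$-invariant vector of $\lambda_{\bH}$ because $\rho$ has dense image, is precisely the non-trivial content of your route, and in the quantum setting it does not simply ``follow from the universal property of $\overline{\mathrm{im}~\rho}$ together with the weak-containment formalism'': invariance of a vector is a condition formulated against $L^{2}(\bG)$ and the multiplicative unitary, and relating it to invariance for the original $\bH$-representation requires a genuine argument. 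What you are in effect reconstructing is the proof that property (T) is inherited by the closed image of a dense-image morphism; that is \cite[Theorem 5.7]{dsv}, which the paper cites outright. Having concluded that $\bH$ has property (T), the paper then finishes with the classical fact that an abelian locally compact group with property (T) is compact (\cite[Theorem 1.1.6]{bdv}), hence trivial as a closed subgroup of $(\bR,+)$.

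So your outline is sound and the lemma you isolate (no dense-image morphism from a property-(T) LCQG onto a non-compact amenable group) is true, but as written the proof is incomplete at its crux. The cleanest repair is to replace the hand-rolled pullback-of-$\lambda_{\bH}$ argument by a citation of \cite[Theorem 5.7]{dsv}; after that, compactness of $\bH$ follows either from your amenability reasoning applied to the genuinely classical group $\bH$, or directly from \cite[Theorem 1.1.6]{bdv}.
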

\begin{proof}
  Consider $\delta\in' C_0(\bG)$ as in the statement. \Cref{pr:mortor} and \Cref{th:clim} provide us with a morphism $\bG\to \bR$ whose closed image is
  \begin{equation*}
    \bH:=\{\log t\ |\ 0\ne t \in \mathrm{Sp}(\delta)\} \le (\bR,+). 
  \end{equation*}
  But the closed image of the resulting morphism $\bG\to \bH$ is then all of $\bH$ essentially by definition. It follows that $\bG\to \bH$ has {\it dense image} in the sense of \cite[Definition 2.8]{dsv} (see \cite[discussion following the statement of Theorem A.1]{dsv}), and hence $\bH$ also has property (T) by \cite[Theorem 5.7]{dsv}.

  Being classical abelian and property-(T), $\bH$ must be compact \cite[Theorem 1.1.6]{bdv} and hence trivial because it is a subgroup of $(\bR,+)$. We are now done: the spectrum of the strictly-positive operator $\delta$ is $\{1\}$, so $\delta=1$.
\end{proof}



\addcontentsline{toc}{section}{References}

\Addresses

\end{document}